\newcommand{\RicE}[1]{{#1}^{-1}\Ric_{#1}}  
\DeclareMathOperator\Sym{Sym}
\DeclareMathOperator\Isom{Isom}
\newcommand{\hca}{\mathcal{H}}
\newcommand{\vca}{\mathcal{V}}
\newcommand{\N}{\sans{N}}
\newcommand{\G}{\sans{G}}
\newcommand{\K}{\sans{K}}
\newcommand{\Glp}{{\mathsf{GL}^+}}
\newcommand{\mcv}{\mathrm H}
\newcommand{\pca}{\mathcal{P}}
\newcommand{\ngo}{\mathfrak{n}}
\newcommand{\spacesymbol}[1]{\mathbb{#1}}
\newcommand{\sans}[1]{\mathsf{#1}}
\newcommand{\R}{\spacesymbol{R}}
\newcommand{\C}{\spacesymbol{C}}
\newcommand{\Z}{\spacesymbol{Z}}
\newcommand{\m}[2]{\left\langle #1, #2 \right\rangle}
\DeclareMathOperator\tr{tr}
\DeclareMathOperator\id{Id}
\DeclareMathOperator\dive{div}
\DeclareMathOperator\Ric{Ric}
\DeclareMathOperator\scal{scal}
\DeclareMathOperator\GL{\sans{GL}}
\DeclareMathOperator\SL{\sans{SL}}
\DeclareMathOperator\PSL{\sans{PSL}}
\DeclareMathOperator\Or{\sans{O}}
\DeclareMathOperator\SO{\sans{SO}}
\DeclareMathOperator\Aut{Aut}
\DeclareMathOperator\Aff{Aff}
\DeclareMathOperator\Der{Der}
\DeclareMathOperator\Diff{Diff}
\DeclareMathOperator\Hom{Hom}
\DeclareMathOperator\End{End}
\newcommand{\ad}{{\operatorname{ad}}}
\DeclareMathOperator\Ad{Ad}
\DeclareMathOperator\sym{Sym}
   \def\MR#1{}
\newtheorem{maintheorem}{Theorem}
\newtheorem{maincorollary}[maintheorem]{Corollary}
\newtheorem{theorem}{Theorem}[section]
\newtheorem{definition}[theorem]{Definition}
\newtheorem{prop}[theorem]{Proposition}
\newtheorem{corollary}[theorem]{Corollary}
\newtheorem{lemma}[theorem]{Lemma}
\theoremstyle{definition}
\newtheorem{remark}[theorem]{Remark}
\newtheorem{ansatz}[theorem]{Ansatz}
\theoremstyle{remark}
\newtheorem{example}[theorem]{Example}
\title{Expanding Ricci solitons and Higgs bundles}
\author[R.~A.~Lafuente]{Ramiro A.~Lafuente}
\address{School of Mathematics and Physics, The University of Queensland, St Lucia QLD 4072, Australia}
\email{r.lafuente@uq.edu.au}
\author[A.~Thompson]{Adam Thompson}
\address{University of M\"unster, Einsteinstrasse 62, 48149 M\"unster, Germany}
\email{a.thompson@uni-muenster.de}
\begin{document}

\begin{abstract}
Motivated by the long-time behavior of Ricci flows that collapse with bounded curvature, we study expanding Ricci solitons with nilpotent symmetry on vector bundles over a closed manifold. We prove that, under mild assumptions that are satisfied by Ricci flow limits, the equations dimension-reduce to the so-called twisted harmonic-Einstein equations. When the base is a surface, we establish a correspondence between solutions of the latter and a class of $\G$-Higgs bundles. This allows us to produce infinite families of new examples that are not locally homogeneous, and in particular to obtain a complete description in dimension $4$. We also show that all our examples admit Einstein one-dimensional extensions. 
\end{abstract}

\maketitle

\setcounter{tocdepth}{1}
\tableofcontents

\section{Introduction}\label{Sec:Intro}

An expanding Ricci soliton is a triple $(M,g,X)$ consisting of a complete Riemannian manifold  and a vector field that satisfy 
 \begin{equation}\label{eqn_eRS}
 	\Ric(g)  + \tfrac{1}{2} \mathcal{L}_X g = - \tfrac12 \, g , 
 \end{equation}
 where $\mathcal{L}$ denotes the Lie derivative. Solutions to \eqref{eqn_eRS} give rise to self-similar solutions of the Ricci flow: if $(\eta_t)_{t\in\R}$ denotes the flow of $X$, then $g(t) := \eta_t^* g$ solves the normalised Ricci flow equation
\begin{equation}\label{eqn:NRF}
		\partial_t \, g = - 2  \Ric(g)   - \, g.
\end{equation}
(Notice that $(g(t))_{t\in \R}$ solves \eqref{eqn:NRF} if and only if $(\bar g(s) := s \cdot g (\log(s)))_{s\in (0,\infty)}$ solves the unnormalised  flow.) Besides generalising Einstein manifolds, and modelling desingularisation of cone-like singularities of the flow \cites{Der16,GS18}, these solitons also play a central role in understanding the long-time behavior of  Ricci flow solutions that exist for all $t\in (0,\infty)$ (\emph{immortal} solutions). As shown by the 3D case \cite{Lott10}, solitons with constant scalar curvature are particularly relevant. However, as far as we know, all known examples of expanding Ricci solitons with constant scalar curvature are locally homogeneous.

Our aim in this article is to construct, in each dimension $n\geq 4$, infinite families depending on an arbitrary large number of parameters, of expanding Ricci solitons with constant negative scalar curvature that are \emph{not} locally homogeneous.  
The underlying manifolds  are  diffeomorphic to a so-called \emph{twisted principal bundle}: a Lie group bundle
\begin{equation}\label{eqn:tpb_intro}
	M^n = \tilde B \times_\rho \N \longrightarrow B^2,
\end{equation}
over a closed surface, with fibre a simply-connected nilpotent Lie group $\N$. More precisely, $M$ is the total space of the associated bundle to the universal cover $\tilde B \to B$  corresponding to a representation $\rho\in \Hom(\pi_1(B), \Aut(\N))$; see \Cref{Sec:Twisted Bundles} for  details. 
Locally, there is a free $\N$-action that is transitive on fibres, which in general cannot be extended to a well-defined action on the total space due to the global twist by $\rho$. 

It turns out that the solitons in this article have additional structure: they are all \emph{affine expanding solitons}. In essence, this means that they are complete, the local $\N$-actions are isometric, and the diffeomorphisms $\eta_t : M \to M$ driving the Ricci flow evolution are bundle maps that act as automorphisms on the fibres: see Definition \ref{def_affinesoliton} for a precise definition. The name is motivated by a canonical affine connection on the fibres of $M$ which encodes the local $\N$-actions (see $\S$\ref{sec:can_vert}), and is key in the collapsing theory \cite{CFG}.  These solitons  generalise in a natural way the well-known \emph{algebraic solitons} from the homogeneous setting \cites{homRS, Jbl2015,Jbl13b}, and thanks to  a forthcoming result \cite{LP25},  they indeed describe the general asymptotic behavior of immortal Ricci flows on \emph{closed} manifolds that collapse with bounded cuvature and diameter.


\smallskip

A second motivation
comes from the recent \emph{non-existence} results for invariant Einstein metrics of negative scalar curvature on (untwisted) principal bundles, see \cites{alek_sol,BL23b}. As an upshot of  the present paper, a plethora of examples can be constructed if one allows for the isometric actions to be defined only locally.


\bigskip

Our first main result is the following dimension-reduction theorem, which makes no assumption on  $\dim B$:

\begin{maintheorem}\label{main:eRS-tHE}
Let $B$ be a closed manifold and $\N$ a simply-connected nilpotent Lie group. Any affine expanding Ricci soliton on a twisted principal bundle $\tilde B \times_\rho \N$  gives rise to a solution to the twisted harmonic-Einstein equations \eqref{eqn_twistharmEin} for the pair $(B, \G/\K)$, where $\G/\K$ is a symmetric space of non-compact type naturally associated to $\N$. Conversely, to each solution to \eqref{eqn_twistharmEin} there corresponds an affine expanding Ricci soliton on $\tilde B \times_\rho \N$.
\end{maintheorem}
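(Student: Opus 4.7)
The plan is to proceed by a Kaluza-Klein-style decomposition of the $\N$-invariant geometric data, adapted to the twisted setting. Locally, in a trivialisation $U\times\N$ of the twisted principal bundle, an $\N$-invariant metric on $\tilde B\times_\rho\N$ is encoded by three pieces of data: a Riemannian metric $g_B$ on $U$ (the horizontal metric), a principal connection $1$-form $\omega$ with values in $\ngo$, and a map $\phi\colon U\to\pca$ into the space $\pca$ of inner products on $\ngo$, prescribing a left-invariant metric on each fibre. The flatness of the twisted bundle together with $\rho$-equivariance glues these local pieces into global objects: $g_B$ is a Riemannian metric on $B$, $\omega$ descends to a connection on the twisted bundle, and $\phi$ corresponds to a $\rho$-equivariant map $\tilde\phi\colon\tilde B\to\pca$, equivalently a section of the associated bundle $\tilde B\times_\rho\pca\to B$. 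The target $\pca$ is a symmetric space of non-compact type; the homogeneous space $\G/\K$ in the statement arises as the $\Aut(\N)$-orbit (or a suitable component thereof) on which the identification with left-invariant metrics on $\N$ is well-posed.

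Next I would decompose $\Ric(g)$ using O'Neill-type submersion formulas, adapted to the fact that the fibre metric $\phi(b)$ varies over $B$. The horizontal-horizontal, horizontal-vertical, and vertical-vertical components yield, schematically: a vertical-vertical part given fibrewise by the Ricci tensor of the left-invariant metric $\phi(b)$ (algebraically explicit via Lauret's formula for nilpotent Lie algebras), corrected by the $A$-tensor of the submersion and a quadratic term in $d\phi$; a horizontal-vertical part involving the covariant divergence of $F_\omega$ together with second covariant derivatives of $\phi$; and a horizontal-horizontal part equal to $\Ric(g_B)$ minus the pull-back $\tilde\phi^*g_\pca$ of the canonical symmetric metric on $\pca$ (well-defined on $B$ by $\rho$-equivariance), minus a quadratic expression in $F_\omega$.

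Imposing the soliton equation $\Ric(g)+\tfrac12\mathcal{L}_X g=-\tfrac12 g$ and decomposing $X=X_B+X_\vca$ into horizontal and vertical parts, the three components of the equation correspond to the vertical-vertical nilsoliton equation on each fibre, a mixed equation constraining $\omega$ and $\phi$, and a horizontal equation of Einstein type. At this stage I would invoke the structural input from \Cref{thm_LP}: because the soliton is a limit of collapsed Ricci flows, each fibre $(\N,\phi(b))$ is forced to be a nilsoliton with derivation matching $X_\vca$ at $b$, and the connection $\omega$ must be flat, consistent with the $N^*$-structure description. With $\omega=0$ the mixed component reduces to the harmonic map equation for $\tilde\phi\colon\tilde B\to\pca$, while the horizontal component becomes the Einstein equation for $g_B$ with stress-energy tensor $\tilde\phi^*g_\pca$: exactly the twisted harmonic-Einstein system \eqref{eqn_twistharmEin}.

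For the converse, given a solution $(g_B,\tilde\phi)$ to the twisted harmonic-Einstein equations, I would reconstruct the $\N$-invariant soliton on $\tilde B\times_\rho\N$ by taking $\omega=0$, using $\tilde\phi$ to prescribe the fibre metric, and defining $X$ as the horizontal lift of the canonical $X_B$ plus the vertical component determined at each fibre by Lauret's nilsoliton derivation; the soliton equation then follows from reading off the decomposition of the previous step in reverse. The main obstacle, I expect, is establishing the two structural reductions extracted from \Cref{thm_LP} --- namely that $\omega$ is flat and that each fibre is a nilsoliton --- since these rely on properties of the collapsed limit rather than on the soliton equation alone. Once these reductions are in hand, the identification of the remaining equations with harmonic-Einstein is comparatively routine algebra.
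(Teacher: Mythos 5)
Your overall architecture matches the paper's: decompose the invariant metric into base metric, connection, and fibre-metric data; plug into the O'Neill submersion formulas; and reduce the soliton equation component-by-component. This is exactly what the paper does in \Cref{Sec:Ansatz}, where \Cref{Ansatz} packages the structural conditions and \Cref{prop_AnsatthenHE} carries out the dimension reduction, with \eqref{Eq:SubmersionCurvature(V)}, \eqref{Eq:SubmersionCurvature(M)}, \eqref{Eq:SubmersionCurvature(B)} playing the role of the three components you describe. Your treatment of the converse direction also matches \Cref{prop_AnsatthenHE}, with the small caveat that the soliton vector field $X$ is purely vertical (built from the nilsoliton derivation field via \Cref{Lemma:SolVec}); there is no horizontal piece ``$X_B$'' because the base equation \eqref{Eq:HarmonicEinsteinEquations(B)} is Einstein-type, not a soliton equation.

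The genuine gap is in the step you flag as ``the main obstacle'' and then dispose of in a single sentence. You write that the structural reductions --- flatness of the connection and nilsoliton fibres --- ``rely on properties of the collapsed limit rather than on the soliton equation alone.'' This is not what the paper does, and it is not how one can proceed: the Ricci flow limit structure from \Cref{thm_LP} supplies only the twisted principal bundle structure and a gradient-type normalisation of the soliton equation of the form \eqref{eqn:Ric=beta}; it does \emph{not} hand you nilsoliton fibres, fibre minimality, parallelism of the Ricci endomorphism, or verticality of $X$. These are theorems, proved in \Cref{thm_afine_then_ansatz} by applying the strong maximum principle on the compact base $B$ to carefully chosen weighted volume densities: in Step 1, $\log v_{\beta^+}^\ad$ is shown to be $\Delta_{\rm m}$-subharmonic, forcing $A = 0$, $\beta^+_\ad \in \Gamma\Der(\ad M)$, and $\nabla^\ad \beta^+_\ad = 0$; in Step 2, a combined quantity $\scal(h) + \log v_\beta^\ad$ is shown to be $\Delta_{\rm m}$-superharmonic via the moment-map convexity inequality \eqref{eqn:Deltascalh}, forcing the fibres to be nilsolitons and minimal and the shape operators to be derivation fields. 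Your proposal omits minimality of the fibres and $\nabla^\ad(\RicE{h}) = 0$ entirely, both of which are essential ingredients of \Cref{Ansatz}\ref{item:A_soliton}.

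You also miss the third structural reduction altogether: that $X$ has no horizontal component, i.e.\ that the soliton potential $f$ in \eqref{eqn:Ric=beta} is constant. After Steps 1--2, the pair $(\tilde g_B, h)$ satisfies the \emph{expanding gradient soliton} equations for the harmonic Ricci flow \eqref{eqn:hRFsoliton}, and only a Hamilton-type maximum principle argument on the twisted scalar $\tilde s = \tr_{\tilde g_B}(\Ric_{\tilde g_B} - h^* g_{\rm sym})$ (Step 3 of \Cref{thm_afine_then_ansatz}, following \cite{Wil15}) upgrades this to the harmonic-\emph{Einstein} system \eqref{Eq:HarmonicEinsteinEquations}. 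Without this step, the horizontal equation you write down is not \eqref{Eq:HarmonicEinsteinEquations(B)} but contains an unwanted $\mathrm{Hess}(f)$ term. In short: what you label ``comparatively routine algebra'' is indeed routine, but the structural reductions you defer \emph{are} the theorem, and they require elliptic PDE arguments on $B$ using the GIT-weight machinery of \Cref{rmk_GIT}, not properties imported from the collapsed limit.
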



The \emph{twisted harmonic-Einstein equations} for a pair $(B, \G/\K)$ consisting of a smooth manifold $B$ and a symmetric space of non-compact type $\G/\K$  is the following coupled system of equations for a triple $(g_B, \rho, h)$, where $g_B$ is a Riemannian metric on $B$, $\rho \in \Hom(\pi_1(B),\G)$, and $h : \tilde B \to \G/\K$ is smooth and $\rho$-equivariant: 
\begin{equation} \label{eqn_twistharmEin}
\begin{cases}
 \tr_{\tilde g_B} (\nabla d h) &= 0 \, ,  \\
 \Ric(\tilde g_B) &= - \tfrac12 \tilde g_B+h^*g_{\mathsf{sym}}. 
\end{cases}
\end{equation}
Here $\tilde g_B$ is the lift of $g_B$ to $\tilde B$. The first equation
is simply the harmonic map equation for $h : (\tilde B, g_B) \to (\G/\K, g_{\mathsf{sym}})$ \cite{ES64}, whereas the second one is a variation of the Einstein equation. These equations first appeared in \cite{Lot07}, see also \cites{List08,Mul12}. A similar dimensional reduction of the Ricci-flat equation to a coupled equation for a harmonic map and metric on a lower dimensional manifold is exploited in the study of toric gravitational instantons \cites{KL21,LS25}. The way an invariant expanding soliton gives rise to a solution of \eqref{eqn_twistharmEin} is as follows: $g_B$ is the unique metric on $B$ such that \eqref{eqn:tpb_intro} is a Riemannian submersion; the connection defined by the horizontal distribution is flat (i.e.~said distribution is integrable), and its monodromy is $\rho$;  and $h$ encodes the fibre metrics, which may be viewed as left-invariant metrics on $\N$.

We notice that, in the particular case of invariant Einstein metrics on principal bundles, \Cref{main:eRS-tHE} essentially reduces to  \cite{alek_sol}*{Thm.~G}, which is the key step towards the proof of the Alekseevskii conjecture.
The converse assertion in \Cref{main:eRS-tHE} uses an Ansatz (\ref{Ansatz}) inspired by the work of Lott, who proved \Cref{main:eRS-tHE} for abelian $\N$ \cites{Lot07,Lott10}. When $\N$ is not abelian, the expanding solitons constructed in \Cref{main:eRS-tHE} are \emph{non-gradient}, i.e.~$X$ cannot be taken as the gradient of a smooth function on $M$ (\Cref{prop:non-gradient}).

In order to describe the symmetric space $\G/\K$ associated to $\N$, we  first mention that a necessary condition for solving \eqref{eqn_eRS} on $\tilde B \times_\rho \N$ is that $\N$ admits a \emph{nilsoliton} metric $\bar h$, i.e.~a left-invariant expanding Ricci soliton \cite{soliton}, whose Ricci curvature (in a left-invariant frame) satisfies $\Ric_{\bar h} = -\tfrac12 \bar h + \bar h(D \cdot, \cdot)$, for some derivation $D\in \Der(\ngo)$. We then set
\begin{equation}\label{eqn_G=Autbeta}
	\G := 
	\big\{\varphi\in\Aut(\sans{N}) :  \varphi_* D \varphi_*^{-1} = D,
    \,\, \det \varphi_* = \pm 1 \big\}, \qquad \K := \G \cap \Isom(\N, \bar h),
\end{equation}
where 
$\varphi_* = d\varphi|_e \in \Aut(\ngo)$.
Results from real GIT imply that $\K$ is a maximal compact subgroup of the real reductive Lie group $\G$ \cites{RS90,Nik11}, see \Cref{app_nil}. Examples of nilsolitons include flat Euclidean space $\R^n$, and (the universal cover of) the ${\rm Nil^3}$ geometry in dimension three. If $\N\simeq \R^n$ is abelian, $D = \tfrac12 {\rm Id}$ and we have $\G = \det^{-1}(\pm 1) \subset \GL_n(\R)$, $\K = \mathsf{O}(n)$, thus $\G/\K$ is the space of inner products on $\R^n$ with a fixed density. For non-abelian $\N$ we may always view $\G/\K$ as a totally geodesic submanifold of $\det^{-1}(\pm 1) / \mathsf{O}(n)$.

\bigskip

In our second main result, we apply Theorem~\ref{main:eRS-tHE} to produce large families of Ricci solitons by solving the twisted harmonic-Einstein equations \eqref{eqn_twistharmEin}. It turns out that all expanding solitons corresponding to solutions with $\dim B \leq 1$ are locally homogeneous (see \Cref{sec:dimB=1}), so the first interesting case for us is when $B$ is a closed surface. In this case, a result of Labourie \cite{Labourie08} allows us to solve the twisted harmonic-Einstein equations for a certain class of representations.


\begin{maintheorem}\label{maincor_Hit_comp}
Let $B^2$ be a closed surface, and suppose that $\rho \in \Hom(\pi_1(B), \SL_n(\R))$ lies in a Hitchin component. 
Then, on the $\rho$-twisted principal bundle $M = \tilde B \times_\rho \R^n$ there exists an affine expanding Ricci soliton metric.
\end{maintheorem}


The Hitchin components, discovered in \cite{Hit92}, are certain distinguished connected components of the space of representations $\Hom(\pi_1(B), \G) / \G$ for an adjoint split semisimple Lie group $\G$, which in some sense generalise the Teichm\"uller space component in the case $\G = \mathsf{PGL}_2(\R)$. If \(\G\) is a split semisimple Lie group that is not necessarily adjoint type, we abuse terminology by saying \(\rho \in \Hom(\pi_1(B), \G)\) lies in a Hitchin component if the conjugacy class of \(\Ad\circ \rho\) does. Hitchin components are homeomorphic to  open balls of dimension $(2g-2) \dim \G$, where $g$ is the genus of $B$. Since the underlying manifold  $M^{n+2} = \tilde B \times_\rho \N$ remains constant when $\rho$ varies in a fixed component, \Cref{maincor_Hit_comp} yields the existence of a  $(2g-2)(n^2-1)$-parameter family of complete expanding Ricci solitons on $M^{n+2}$ admitting a free, local, isometric $\R^n$-action.


\smallskip

If we fix a conformal structure \(c\) on the closed surface \(B^2\), we are able to exhibit more solutions to the twisted harmonic-Einstein equations by exploiting the non-abelian Hodge correspondence: 

\begin{maintheorem}\label{main:tHE-Higgs}
Let $c$ be a conformal class on a closed  surface $B^2$ of genus at least $2$. Then, there is a one-to-one correspondence between equivalence classes of solutions $(g_B, \rho, h)$ to the twisted harmonic-Einstein equations \eqref{eqn_twistharmEin} with $g_B \in c$, and equivalence classes of polystable $\G$-Higgs bundles $(P,\Phi)$ over $(B,c)$ with $\tr \Phi^2 = 0$ up to pullback by automorphisms of \((B^2,c)\).
\end{maintheorem}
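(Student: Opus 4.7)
The plan is to invoke the non-abelian Hodge correspondence of Corlette-Donaldson-Simpson (and its extension to real reductive groups) for the Riemann surface $(B,c)$ and the group $\G$ of \eqref{eqn_G=Autbeta}, and to reduce the second (Einstein-type) equation of \eqref{eqn_twistharmEin} to a scalar PDE for the conformal factor of $g_B$. The crucial point making such a splitting work is that in real dimension two the harmonic-map equation depends only on the conformal class of the domain metric, so the two equations in \eqref{eqn_twistharmEin} decouple naturally.

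\emph{From solutions to polystable Higgs bundles.} Given a solution $(g_B,\rho,h)$, the first equation of \eqref{eqn_twistharmEin} says that $h$ is a $\rho$-equivariant harmonic map from $(\tilde B,c)$ into $\G/\K$. By Corlette's theorem, existence of such a harmonic map forces $\rho$ to be reductive, and non-abelian Hodge assigns to $\rho$ a polystable $\G$-Higgs bundle $(P,\Phi)$ on $(B,c)$, built as follows: using the $\K$-reduction of the flat $\G$-bundle $\tilde B\times_\rho\G$ determined by $h$, split the flat connection as $d=d_\K+\Psi$ with $\Psi\in\Omega^1(B,\ad P)$ taking values in the $-1$-eigenspace $\mathfrak{p}$ of the Cartan involution; equip the complexification with the Koszul-Malgrange holomorphic structure $\bar\partial=d_\K^{0,1}+\Psi^{0,1}$ and set $\Phi:=\Psi^{1,0}$. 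Harmonicity of $h$ becomes $\bar\partial\Phi=0$, and polystability is equivalent to reductivity of $\rho$. The constraint $\tr\Phi^2=0$ is imposed by the second equation: in dimension two $\Ric(\tilde g_B)=K(\tilde g_B)\,\tilde g_B$, so \eqref{eqn_twistharmEin} forces $h^*g_{\mathsf{sym}}$ to be pointwise proportional to $\tilde g_B$, i.e.~$h$ is weakly conformal. A direct computation in the Cartan decomposition identifies $\tr\Phi^2$ with a nonzero constant multiple of the Hopf differential of $h$, and this vanishes precisely when $h$ is weakly conformal.

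\emph{From polystable Higgs bundles to solutions.} Given polystable $(P,\Phi)$ with $\tr\Phi^2=0$, non-abelian Hodge produces a harmonic metric on $P$, hence a reductive representation $\rho$ and a $\rho$-equivariant harmonic map $h:\tilde B\to\G/\K$; the Hopf-differential identification then gives weak conformality of $h$, so $h^*g_{\mathsf{sym}}=\sigma\,g_0$ for any fixed reference metric $g_0\in c$ with $\sigma\ge 0$ determined by $h$. Writing $g_B=e^{2u}g_0$, the second equation of \eqref{eqn_twistharmEin} reduces to the scalar semilinear equation
\[
\Delta_{g_0}u=K_{g_0}+\tfrac12 e^{2u}-\sigma,
\]
which, by the strict monotonicity of the zeroth-order term and the sign of $\int K_{g_0}\,dv_{g_0}$ in genus at least two, admits a unique smooth solution (sub- and super-solution method, or strictly convex minimisation). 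Matching of the equivalence relations on both sides is routine from naturality: gauge transformations of $P$ correspond to conjugation of $\rho$ combined with the induced action on $h$, and conformal automorphisms of $(B,c)$ act compatibly under each of the constructions above.

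\emph{Main obstacle.} The scalar PDE step is standard. The delicate point is to set up the Corlette-Donaldson-Simpson correspondence precisely for the specific real reductive group $\G$ of \eqref{eqn_G=Autbeta}, which is in general neither complex nor semisimple, and to pin down the correct notion of polystability for $\G$-Higgs bundles (following e.g.~Bradlow-Garc\'ia-Prada-Gothen). In particular, one must verify that Corlette's harmonic-reduction theorem and the Hitchin-Kobayashi correspondence for $\G$-Higgs bundles apply to this $\G$, and check the precise identification of $\tr\Phi^2$ with the Hopf differential of $h$ at the level of the Killing form restricted to $\mathfrak p^{\C}$.
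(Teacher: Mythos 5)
Your proposal is correct and follows essentially the same route as the paper: decoupling the two equations via conformal invariance of the harmonic-map equation in dimension two, invoking the non-abelian Hodge correspondence for real reductive groups (the paper cites Garc\'ia-Prada--Gothen--Mundet i Riera, stated for connected linear $\G$) together with Corlette's theorem, identifying $\tr\Phi^2$ with the Hopf differential so that $\tr\Phi^2=0$ is equivalent to weak conformality of $h$, and reducing the curvature equation to a scalar semilinear PDE for the conformal factor with existence and uniqueness handled by standard methods. The only cosmetic difference is in that PDE step: the paper minimises a functional subject to the Gauss--Bonnet constraint (as in the prescribed Gauss curvature problem) and deduces uniqueness from the fact that two conformal metrics with the same Ricci tensor on a compact surface differ by a constant factor, whereas you propose sub-/super-solutions and monotone uniqueness; both are valid.
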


Higgs bundles were introduced by Hitchin \cite{Hitchin87} and Simpson \cite{Simp88}, and have been extensively studied since then. We refer the reader to \Cref{sec_riemsurf} below for their definition and related notions such as stability and equivalence. The equivalence notion for solutions of \eqref{eqn_twistharmEin} is motivated by the link with Riemannian metrics on twisted principal bundles, and is made precise in \Cref{def:tHE_equivalence} (see also \Cref{prop:param_wilson}).   We emphasise that \Cref{main:tHE-Higgs} holds for any symmetric space $\G/\K$ of non-positive curvature, and is not restricted to those arising from the soliton construction as in \eqref{eqn_G=Autbeta}.

Theorems \ref{main:eRS-tHE} and \ref{main:tHE-Higgs} yield the following consequence, which is our main tool for producing examples of expanding Ricci solitons by using Higgs bundles:

\begin{maincorollary}\label{maincor:isom_of_sols}
Given a closed Riemann surface $\Sigma = (B^2, c)$ of genus at least $2$, there is a finite-to-one correspondence between polystable  $\G^0$-Higgs bundles $(P,\Phi)$ over $\Sigma$ with $\tr \Phi^2= 0$, up to equivalence and pullback by $\Aut(\Sigma)$, and affine expanding Ricci solitons on twisted principal bundles $\tilde B \times_\rho \N \to B$ with \(\rho(\pi_1(B))\le \G^0\) and inducing the conformal structure $c$ on $B$, up to isometric bundle maps. Here, \(\G^0\) is the identity component of the group \(\G\) defined in \eqref{eqn_G=Autbeta}. 
\end{maincorollary}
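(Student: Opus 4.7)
The plan is to compose the bijections of \Cref{main:eRS-tHE} and \Cref{main:tHE-Higgs}, and then to relate polystable $\G$-Higgs bundles with $\G^0$-reduction to polystable $\G^0$-Higgs bundles. By \Cref{main:eRS-tHE}, invariant expanding Ricci solitons on twisted principal bundles $\tilde B\times_\rho \N \to B$ arising from \Cref{Ansatz}, taken up to isometric bundle maps, are in bijection with solutions $(g_B,\rho,h)$ of \eqref{eqn_twistharmEin} on $(B,\G/\K)$, taken up to the equivalence of \Cref{def:tHE_equivalence}. Under this correspondence, the soliton induces the conformal class $c$ on $B$ iff $g_B\in c$, and the condition $\rho(\pi_1(B))\le \G^0$ is preserved on both sides since the same representation $\rho$ appears in each datum. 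Since $\Sigma$ has genus at least $2$, the group $\Aut(\Sigma)$ is finite and can be realised as isometries of suitable metrics in $c$; thus pullback by $\Aut(\Sigma)$ on the tHE side is absorbed into isometric bundle maps covering nontrivial elements of $\Aut(\Sigma)$ on the soliton side.

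Next, \Cref{main:tHE-Higgs} provides a one-to-one correspondence between tHE solutions with $g_B\in c$, modulo equivalence and pullback by $\Aut(\Sigma)$, and polystable $\G$-Higgs bundles $(P,\Phi)$ over $\Sigma$ with $\tr\Phi^2 = 0$, modulo equivalence and pullback by $\Aut(\Sigma)$. Under this correspondence, the underlying principal $\G$-bundle $P$ is associated to the flat $\G$-bundle $\tilde B\times_\rho \G$ determined by the monodromy, so $\rho(\pi_1(B))\le \G^0$ holds iff $P$ admits a reduction of structure group to $\G^0$. Any such reduction determines a polystable $\G^0$-Higgs bundle $(P',\Phi')$ with $(P,\Phi)=(P'\times_{\G^0}\G,\Phi')$, and polystability together with the condition $\tr\Phi^2=0$ is preserved under both extension and reduction of structure group between $\G^0$ and $\G$.

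Finally, the forgetful map from equivalence classes of polystable $\G^0$-Higgs bundles with $\tr\Phi^2=0$ to equivalence classes of polystable $\G$-Higgs bundles with $\G^0$-reduction and $\tr\Phi^2=0$ is finite-to-one, because the group $\G$ is real reductive by \eqref{eqn_G=Autbeta} and \Cref{app_nil}, hence has finitely many connected components, and the set of $\G^0$-reductions of a fixed $\G$-bundle modulo $\G^0$-gauge equivalence is controlled by the finite group $\G/\G^0$. Composing the three steps (bijective, bijective, finite-to-one) and checking compatibility with the $\Aut(\Sigma)$-quotient at each stage yields the required finite-to-one correspondence. The main obstacle I anticipate is the careful bookkeeping of how the finite group $\G/\G^0$ interacts with both the gauge equivalence on Higgs bundles and the $\Aut(\Sigma)$-quotient—specifically verifying that no further identifications occur beyond what $\G/\G^0$ accounts for—but the qualitative finite-to-one statement only requires finiteness of $\G/\G^0$, which is guaranteed.
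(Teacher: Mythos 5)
Your step from twisted harmonic--Einstein solutions to polystable \(\G\)-Higgs bundles quietly applies Theorem~\ref{main:tHE-Higgs} with the full (possibly disconnected) group \(\G\), but that theorem is only established for connected real reductive groups: its proof goes through the non-abelian Hodge correspondence (Theorem~\ref{thm:nonabHodge}), which is cited from \cite{GGM} only for connected \(\G\). This is exactly the ``technicality in the literature'' the paper flags immediately after stating Corollary~\ref{maincor:isom_of_sols}, and it is the whole reason the corollary is phrased in terms of \(\G^0\)-Higgs bundles rather than \(\G\)-Higgs bundles. Consequently your plan --- solitons \(\leftrightarrow\) tHE solutions \(\leftrightarrow\) polystable \(\G\)-Higgs bundles, followed by passing to \(\G^0\)-reductions --- hinges in its middle step on a correspondence that the paper neither proves nor assumes, and your third step (matching \(\G^0\)-Higgs bundles with \(\G\)-Higgs bundles that reduce to \(\G^0\)) again lives in the disconnected-group Higgs bundle theory you are trying to avoid.

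The paper sidesteps \(\G\)-Higgs bundles entirely. It applies Theorem~\ref{main:tHE-Higgs} only with the connected group \(\G^0\), obtaining a bijection between polystable \(\G^0\)-Higgs bundles with \(\tr\Phi^2=0\) (up to equivalence and \(\Aut(\Sigma)\)) and tHE solutions for \((B^2,\G^0/\K^0)\) up to \(\G^0\)-equivalence, denoted \(\mathcal{HE}_0\). On the other side, Proposition~\ref{prop:equiv_iff_isom} identifies solitons up to isometric bundle maps with tHE solutions for \((B^2,\G/\K)\) up to \(\G\)-equivalence, \(\mathcal{HE}\). The finite-to-one statement is then proved \emph{directly at the level of tHE solutions}: the paper shows that the natural map \(\mathcal{HE}_0\to\mathcal{HE}\) is finite-to-one, by observing that a single \(\G\)-equivalence class of triples \((g_B,\rho,h)\) with \(\rho(\pi_1(B))\le\G^0\) decomposes into at most \(|\G/\G^0|\) many \(\G^0\)-equivalence classes, represented by \((g_B,\,C_{\varphi_i}\circ\rho,\,\varphi_i\cdot h)\) for coset representatives \(\varphi_i\) of \(\G^0\) in \(\G\). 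This is the concrete mechanism producing the finite fibers; your appeal to ``reductions of structure group controlled by \(\G/\G^0\)'' is morally in the same spirit, but it cannot be executed without first establishing the disconnected-group non-abelian Hodge correspondence, which is precisely the ingredient the paper declines to use.
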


The restriction to solitons with monodromy into the identity component $\G^0$ in \Cref{maincor:isom_of_sols} is due to a technicality in the literature on the non-abelian Hodge correspondence, and we do not expect it to be  essential. We note that, up to passing to a finite cover $\hat B \to B$ and pulling back the bundle $M\to B$ to $\hat B$, the assumption \(\rho(\pi_1(B))\le \G^0\) is always satisfied; see the discussion in \Cref{sec:solitons_higgs}. 

The Higgs field $\Phi$ is intimately related to $d h$, and the condition $\tr\Phi^2 = 0$ is well-known to be equivalent to $h$ being weakly conformal (also called a branched minimal immersion). In the special case  where $dh$ has full rank, $h$ is indeed a minimal immersion of $\tilde B$ into the symmetric space $\G/\K$. This condition plays a key role in Labourie's conjecture \cite{Labourie08}, known to hold in rank 2 \cites{Lab17,CTT19}, but recently disproved for higher rank groups \cite{SS22}. See also \cites{AL18,LM19}.

\smallskip

We now give a more detailed description of our results in the particularly relevant case of dimension $4$. If $\dim \N =0$, then the expanding soliton is a closed negative Einstein manifold $(B^4,g_B)$. For $\dim \N = 1$, the soliton is locally isometric to the product of $\R$ and a closed hyperbolic manifold $(B^3, g_B)$ (cf.~\cite{Lott10}*{Prop.~4.80}). As mentioned above, $\dim B \leq 1$ yields locally homogeneous expanding solitons. We are left with  the case $\dim B = \dim \N = 2$. Here, \Cref{maincor:isom_of_sols} and Hitchin's explicit description of the moduli space of $\SL_2(\R)$-Higgs bundles \cite{Hitchin87}*{$\S$10} yield a complete description of 4D affine expanding solitons, which we detail in \Cref{sec:4D}.
In particular, we obtain the following:

\begin{maincorollary}\label{maincor:dim4}
Let $B^2$ be a closed surface of genus $g\geq 3$. Then, there exists a family, depending on $10 g - 14 = 4 (g-2) + 6(g-1)$ real parameters, of affine expanding Ricci solitons with constant scalar curvature on twisted principal bundles $M^4 = \tilde B \times_\rho \R^2$ over $B$, none of which is gradient or locally homogeneous.
\end{maincorollary}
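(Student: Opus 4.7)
The plan is to combine \Cref{maincor:isom_of_sols} with Hitchin's explicit description \cite{Hitchin87}*{\S10} of the moduli of $\SL_2(\R)$-Higgs bundles. Since $\N=\R^2$ is abelian one has $\G^0=\SL_2(\R)$, $\K=\SO(2)$ and $\G/\K=\mathbb{H}^2$. Over a Riemann surface $\Sigma=(B,c)$ of genus $g$, an $\SL_2(\R)$-Higgs bundle is a triple $(L,\beta,\gamma)$ with $L$ a holomorphic line bundle of degree $d$ and sections $\beta\in H^0(L^2K)$, $\gamma\in H^0(L^{-2}K)$; the identity $\tr\Phi^2=2\beta\gamma$ forces $\beta=0$ or $\gamma=0$, and by the $L\leftrightarrow L^{-1}$ symmetry one may focus on the case $\gamma=0$, $\beta\neq 0$.

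A polystability check on $E=L\oplus L^{-1}$ shows that $L$ is then a $\Phi$-invariant subbundle, so $\deg L<0$; since $\beta\neq 0$ requires $L^2K$ to be effective, the allowed range is $-(g-1)\leq d\leq -1$. The Hitchin/Teichm\"uller component (sitting at $d=g-1$ with $\gamma=1$, $\beta=q\in H^0(K^2)$) intersected with $\tr\Phi^2=0$ reduces to the single Fuchsian point per conformal class, which yields a locally homogeneous soliton and is excluded. The dominant non-Teichm\"uller stratum sits at $d=-1$: Riemann--Roch and Serre duality give $h^0(L^2K)=g-3$ for generic $L\in\mathrm{Pic}^{-1}(\Sigma)$ when $g\geq 4$, while for $g=3$ the stratum is supported on the Brill--Noether locus $W^0_2\subset\mathrm{Pic}^{-1}$ of complex dimension $2$ with $h^0(L^2K)=1$. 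Quotienting by the $\C^*$-gauge action that rescales $\beta$ yields complex dimension $2(g-2)$, i.e.\ $4(g-2)$ real parameters per conformal class. Coupling with the $6(g-1)$-dimensional Teichm\"uller space (using that $\Aut(B,c)$ is trivial for generic $c$) gives the total $10g-14=4(g-2)+6(g-1)$, which by \Cref{maincor:isom_of_sols} parametrises invariant expanding solitons up to isometric bundle maps.

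Local homogeneity is ruled out via \eqref{eqn_twistharmEin}: in dimension two this gives $\scal(g_B)=-1+|dh|^2_{g_B}$, where $h\colon\tilde B\to\mathbb{H}^2$ is the $\rho$-equivariant harmonic map associated to the Higgs bundle. For solitons in the $d=-1$ stratum, $h$ is (anti-)holomorphic and non-constant with non-parallel Higgs field $\Phi$, so the energy density $|dh|^2_{g_B}$ is non-constant; hence $\scal$ on $M^4$ is non-constant, precluding local homogeneity. The non-gradient assertion is the main technical point. The soliton vector field $X$ prescribed by the Ansatz restricts to the vertical Euler field on each $\R^2$-fibre, so any candidate potential $f\colon M\to\R$ would have to agree on fibres with $\tfrac14|y|^2_{h(\tilde b)}$ up to an additive base-only term $\phi(\tilde b)$. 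Since the fibre metric $h(\tilde b)$ varies non-trivially over $\tilde B$ and the monodromy $\rho$ has non-compact image in $\SL_2(\R)$ generically in this stratum, no choice of $\phi$ produces a globally defined $f$ on $M$ with $\nabla f=X$. Making this rigorous across the whole $4(g-2)$-parameter stratum is the principal obstacle, and reduces to the fact that reductive polystable Higgs bundles in this component correspond via non-abelian Hodge to non-elementary (hence non-compact-image) representations.
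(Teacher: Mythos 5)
Your dimension count and the overall Higgs-bundle strategy match the paper's proof: you work in the $d=-1$, $\gamma=0$, $\beta\neq 0$ stratum, the paper works with $d=1$, $\beta=0$, $\gamma\neq 0$ (related by the gauge symmetry $L\leftrightarrow L^{-1}$), and parametrises $\mathcal{N}_1$ as a $2^{2g}$-fold cover of $S^{2g-4}B$; your $\mathrm{Pic}$-plus-Riemann--Roch count gives the same $4(g-2)$ real parameters per conformal class. However there is a genuine error in your local-homogeneity step. You claim that non-constancy of the energy density $|dh|^2_{g_B}$ forces $\scal_{M^4}$ to be non-constant. This is false: by \Cref{rmk_not_conical} every soliton produced by the Ansatz has constant scalar curvature --- in the abelian case $\scal_g=-(\dim B)/2=-1$ --- because the $h^*g_{\mathsf{sym}}$ term in \eqref{Eq:HarmonicEinsteinEquations(B)} is exactly cancelled by the O'Neill term $-h^*g_{\mathsf{sym}}(\tilde X,\tilde X)$ in \eqref{Eq:SubmersionCurvature(B)}. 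The argument the paper uses (and the one you should substitute) is that $h^*g_{\mathsf{sym}}$ vanishes precisely at the $2g-2-2|d|>0$ zeros of the Higgs field, so by \eqref{Eq:HarmonicEinsteinEquations(B)} the Gauss curvature of $g_B$ is non-constant; hence $(B,g_B)$ is not locally homogeneous, and by \Cref{lem_suff_for_homogeneity} neither is $(M,g)$.

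The non-gradient step, which you yourself flag as unfinished, is resolved in the paper by \Cref{prop:non-gradient}: an Ansatz soliton with $\N\simeq\R^n$ is gradient if and only if $\rho(\pi_1(B))\le\Or(n)$ (and, incidentally, the fibre potential is $\tfrac12|y|^2_h$, not $\tfrac14$). For $0<|d|<g-1$ the harmonic map $h$ is non-constant; but if $\rho(\pi_1(B))\le\Or(2)$ then the constant map would be a $\rho$-equivariant harmonic map, and by the uniqueness in \Cref{thm_CorDonLab} every $\rho$-equivariant harmonic map would then be constant. Hence $\rho(\pi_1(B))\not\le\Or(2)$ and the soliton is non-gradient. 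Your intuition that one must exclude compact image is the right one; invoking ``non-elementary'' representations is both stronger than needed and left unjustified as stated.
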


For genus $g=2$ we still get a $6 = 6(g-1)$-dimensional family of examples, reflecting the different choices of conformal structures on $B$. However, they are all locally homogeneous (see \Cref{ex:uniformisingrep}).

Regarding non-abelian nilpotent groups, the simplest example is the 3-dimensional Heisenberg group $\N^3$.  Due to the structure of automorphisms preserving the nilsoliton derivation, it turns out that all expanding Ricci solitons on twisted principal bundles $M^5 = \tilde B^2 \times_\rho \N^3$ arise as central extensions of the four-dimensional examples with $\R^2$-symmetry that correspond to $\SL_2(\R)$-Higgs bundles: 

\begin{maincorollary}\label{maincor:dim5}
Let $\N^3$ be the Heisenberg group. Up to isometric bundle maps, the affine expanding Ricci solitons on twisted principal $\N^3$-bundles $M^5 \to B^2$ and having orientation-preserving monodromy, are in  $1$-to-$1$ correspondence with those constructed on twisted principal $\R^2$-bundles $M_0^4 \to B$. 
\end{maincorollary}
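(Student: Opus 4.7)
The plan is to exploit the fact that the group $\G$ associated by \eqref{eqn_G=Autbeta} to the $3$-dimensional Heisenberg nilsoliton is naturally isomorphic to the $\G$ of the flat abelian nilsoliton on $\R^2$, so that the twisted harmonic-Einstein systems \eqref{eqn_twistharmEin} for both sides are literally the same PDE. The corollary then follows from the two directions of \Cref{main:eRS-tHE}.

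The key computation is to identify $\G$ for Heisenberg. Fix a basis $\{e_1,e_2,e_3\}$ of $\ngo^3$ with $[e_1,e_2]=e_3$, for which the nilsoliton derivation is $D = \mathrm{diag}(c,c,2c)$ for some $c>0$. Any $\varphi_* \in \Aut(\ngo^3)$ preserves the centre $\R e_3$ and in this basis takes the block form with a $2\times 2$ matrix $A \in \GL_2(\R)$ on $\mathrm{span}\{e_1,e_2\}$, scalar $\det A$ on $e_3$, and possibly off-diagonal entries $b\in\R^2$; expanding along the last column yields $\det \varphi_* = (\det A)^2$. The relation $D\varphi_* = \varphi_* D$ forces $b=0$ because $c\neq 2c$, and the constraint $\det \varphi_* = \pm 1$ reduces to $\det A = \pm 1$. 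Hence $\G \cong \SL_2^{\pm}(\R)$ and, checking preservation of the nilsoliton inner product, $\K \cong \Or(2)$. Via the induced action on the abelianisation $\ngo^3/[\ngo^3,\ngo^3]\cong \R^2$ these groups coincide with the $(\G,\K)$ of the abelian nilsoliton on $\R^2$; in particular the symmetric space $\G^0/\K^0 = \SL_2(\R)/\SO(2) = \mathbb{H}^2$ and the metric $g_{\mathsf{sym}}$ agree in both cases, up to a rescaling absorbed into the normalisation of the nilsoliton in \Cref{Ansatz}.

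With these identifications, the tHE system \eqref{eqn_twistharmEin} for $(B^2,\G/\K)$ is the same system for both nilsolitons, and its solutions $(g_B,\rho,h)$ form a common set $\mathscr{S}$. By the converse direction of \Cref{main:eRS-tHE}, each element of $\mathscr{S}$ produces simultaneously, via \Cref{Ansatz}, an invariant expanding soliton on $\tilde B \times_\rho \R^2$ and on $\tilde B \times_\rho \N^3$; the forward direction recovers the tHE triple from either soliton. The orientation-preserving hypothesis on $M^5$ corresponds to $\rho(\pi_1 B) \le \G^0 = \SL_2(\R)$, which is also precisely the natural condition on the $\R^2$-side, since on $\R^2$ the sign $\det A = -1$ reverses the orientation of the fibre whereas on $\N^3$ one always has $\det \varphi_* = (\det A)^2 \ge 0$.

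The main technical point I anticipate is to verify that the equivalence relation ``isometric bundle map'' on each side of the claimed bijection pulls back to the same equivalence of tHE triples in the sense of \Cref{def:tHE_equivalence}. I expect this to reduce to two observations: first, that the $\N$-action on each fibre is free and simply transitive and the nilsoliton metric is unique up to isometry and scaling, so every fibrewise isometry is determined by an element of $\K$; and second, that the base map of any isometric bundle map is a diffeomorphism of $(B, g_B)$ intertwining the tHE data. Together these identify the induced maps from moduli of $\R^2$- and $\N^3$-solitons into the moduli of tHE solutions with monodromy in $\G^0$, yielding the desired bijection.
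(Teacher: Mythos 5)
Your proposal is correct and takes essentially the same route as the paper: compute $\G$ and $\K$ for the Heisenberg nilsoliton, observe that the resulting symmetric space coincides with that of the abelian $\R^2$ nilsoliton (so the two twisted harmonic--Einstein systems are literally the same), and transfer solutions back and forth via \Cref{prop_AnsatthenHE} and \Cref{prop:equiv_iff_isom}. Your remark that the identification of the two reductive groups is induced by the action on the abelianisation $\ngo/[\ngo,\ngo]\cong\R^2$ is a slightly cleaner packaging of what the paper does by observing $\G/\K = \G^0/\K^0$ inside $\SL_3(\R)/\mathsf{O}(3)$ and co-restricting $\rho$; and the ``technical point'' you anticipate about isometric bundle maps matching tHE-equivalence is precisely the content of \Cref{prop:equiv_iff_isom}, which the paper simply invokes.
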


The monodromy assumption is always satisfied, after possibly pulling back the bundle to a double-cover $\hat B \to B$.  We have not  specifically pursued the question of which symmetric spaces $\G/\K$ may arise from a nilsoliton Lie group $\N$ via \eqref{eqn_G=Autbeta}. But let us  mention that, by simply restricting to the class of H-type nilpotent Lie groups  \cites{Kap80,Kap81} -- which are known to admit nilsoliton metrics \cite{soliton}--  one already gets a very rich class of examples. Indeed, from the explicit description of $\G$ that may be found in \cite{Saal96} (albeit without the unimodularity constraint), it follows that the symmetric spaces arising from H-type nilpotent Lie groups  contain irreducible factors from 8 out of the 10 infinite families in the classification (the missing ones are $\SL_n(\C)/\mathsf{SU(n)}$ and $\mathsf{SU}(p,q)/{\sans S}(\mathsf{U}(p) \times \mathsf{U}(q))$).

\smallskip

Regarding other non-gradient Ricci solitons, up to very recently, the homogeneous ones were the only source of examples. First constructed by Lauret in \cite{soliton} (see also \cites{BD07,Lot07}), they have been quite extensively studied in the last 20 years, see \cites{cruzchica,Jbl2015,alek_sol} and the references therein. Beyond the homogeneous case, some non-gradient expanders asymptotic to Euclidean cones were obtained in \cite{KL12}. More recently, examples on surfaces were constructed in \cite{TY24}. In dimension 4, asymptotically conical non-gradient expanders with non-negative scalar curvature were shown to exist in \cite{BC23}. The literature on asymptotically conical expanding Ricci solitons is enormous, and we refer the reader to \cite{BC23} and the references therein for an appropriate account of recent developments.

\smallskip

Our last main result is inspired by Lauret's work in the homogeneous case \cite{soliton} relating nilsolitons to Einstein solvmanifolds in one dimension higher, cf.~also \cites{Heber1998,alek,HePtrWyl}.

\begin{maintheorem}\label{main_Einstein}
If $(M^n, g, X)$ is an affine expanding Ricci soliton,
there exist diffeomorphisms $(\phi_u)_{u\in \R}$ of $M$ such that $(\R \times M, g_E :=  du^2 + \phi_u^* g)$ is  Einstein with $\Ric_{g_E} = -\tfrac12 g_E$.
\end{maintheorem}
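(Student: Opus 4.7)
The strategy is to adapt the classical Heber--Lauret construction of Einstein solvmanifolds from nilsolitons (cf.~\cite{Heber1998,soliton}), fibrewise across the twisted principal bundle $M = \tilde B \times_\rho \N$. Recall that the rank-one solvable extension $\sans{S} := \R \ltimes_D \N$ associated to the nilsoliton derivation $D$ admits a canonical left-invariant Einstein metric, obtained at the identity by pairing $\bar h$ with a suitably scaled $du^2$ on the $\R$-factor. The key point is that this fibrewise extension globalises on $M$ precisely because $\rho$ takes values in $\G$, whose defining property in \eqref{eqn_G=Autbeta} is commutation with $D$: the fibrewise one-parameter family $e^{-uD} \in \Aut(\N)$ is $\G$-equivariant, hence glues across bundle charts.

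Concretely, I would define
\[
\phi_u : M \to M, \qquad \phi_u\bigl|_{\text{fibre}} \,=\, e^{-\alpha u D} \in \Aut(\N),
\]
where $\alpha > 0$ is a normalisation constant chosen (via the Heber--Lauret trace identity) so that $du^2$ has the correct length relative to the Einstein metric on $\sans S$. These maps glue to bona fide diffeomorphisms of $M$ covering $\id_B$. Extending $\rho$ to $\rho' \in \Hom(\pi_1(B), \Aut(\sans S))$ trivially on the $\R$-factor of $\sans S$, the product $\R \times M$ is naturally identified with the twisted principal $\sans S$-bundle $\tilde B \times_{\rho'} \sans S$ over $B$; and $g_E := du^2 + \phi_u^* g$ is precisely the invariant metric on this new bundle whose base data coincides with $(g_B, \rho, h)$ and whose fibre metric is Lauret's Einstein metric on $\sans S$.

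To verify $\Ric(g_E) = -\tfrac12 g_E$, I would apply the reverse direction of \Cref{main:eRS-tHE} (i.e.~\Cref{Ansatz}), but with the \emph{Einstein} fibre $\sans S$ in place of the nilsoliton fibre $\N$: the dimension-reduction computation shows that the Einstein equation for an invariant metric on a twisted principal $\sans S$-bundle with Einstein fibre is equivalent to the twisted harmonic-Einstein system \eqref{eqn_twistharmEin} for the base data $(g_B, \rho, h)$, with \emph{no} soliton-type correction inside the fibre (since $\sans S$ is already Einstein). This system holds by hypothesis on $(M,g,X)$ via the forward direction of \Cref{main:eRS-tHE}, so the Einstein equation for $g_E$ follows.

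The main technical obstacle is the calculation in the final step: confirming that the Riemannian-submersion analysis underlying \Cref{main:eRS-tHE} carries over verbatim when the nilpotent fibre $\N$ is replaced by the non-nilpotent solvable extension $\sans S$, and that the upgrade from ``nilsoliton with derivation $D$'' to ``Einstein solvmanifold $\sans S$'' affects only the fibre-Einstein equation (handled by Heber--Lauret) without altering the base equations in \eqref{eqn_twistharmEin}. Concretely, one must carefully track the O'Neill-type tensors on the new submersion---the mean-curvature contribution of the $\partial_u$ direction within each $\sans S$-fibre, and the second fundamental form of the $\N$-slices inside $\sans S$---and the normalising constant $\alpha$ is pinned down by the trace identity forced by this bookkeeping.
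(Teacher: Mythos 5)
Your construction of the diffeomorphisms $\phi_u$ matches the paper's: both are fibrewise bundle automorphisms generated by the nilsoliton derivation field, rescaled by a constant $\alpha$ pinned down via the trace identity \eqref{eqn:trD2}. Your geometric picture of $\R\times M$ as the twisted principal $\sans{S}$-bundle $\tilde B\times_{\rho'}\sans{S}$, with $\sans{S}=\R\ltimes_D\N$ the Heber--Lauret Einstein solvmanifold as fibre and $\rho'$ the trivial extension of $\rho$ (legitimate because $\G$ centralises $D$ by \eqref{eqn_G=Autbeta}), is also correct.

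The gap is exactly where you flag it: the claim that the submersion analysis of \Cref{prop_AnsatthenHE} ``carries over verbatim'' with fibre $\sans{S}$. It cannot carry over verbatim, because $\sans{S}=\R\ltimes_D\N$ is never unimodular (already for $\N\simeq\R^n$ one has $D=\tfrac12{\rm Id}$, so $\tr D = n/2\neq 0$), whereas \Cref{Prop:SubmersionRicci} is stated only for unimodular fibre groups. The mean-curvature terms and the mixed Ricci terms $\Ric(\hat U,\bar X)$ then acquire new contributions, and \Cref{lem:D_perp_adU}, which kills the mixed terms in the nilpotent case, is proved specifically for nilpotent $\ngo$. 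You would have to re-derive the submersion formulas for a non-unimodular solvable fibre and verify that the new terms cancel against the base data --- a genuine calculation which you defer, so the proposal as written does not close.

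The paper's proof avoids all of this. It never introduces the $\sans{S}$-bundle structure; instead it applies a cohomogeneity-one Einstein criterion of Nikonorov--Alekseevskii (\cite{NikoAlek21}, Thm.~1.3): with $g_u := \exp(uD)^*g$ and $D = \alpha D_M$, the metric $du^2 + g_u$ on $\R\times M$ is Einstein with constant $-\tr(D^2)$ if and only if (i) $D$ has constant eigenvalues and ${\rm div}_gD=0$, and (ii) $g^{-1}\Ric_g = (\tr D)D - \tr(D^2)\,{\rm Id}$. Choosing $\alpha = (2\tr D_M^2)^{-1/2}$, condition (ii) becomes precisely the soliton relation \eqref{eqn:Ricg}, and condition (i) follows from the constant eigenvalues of $D_M$ together with the contracted second Bianchi identity applied to the constant-scalar-curvature soliton. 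In effect, that criterion packages exactly the $\sans{S}$-fibre computation you defer into two trace conditions, sidestepping the non-unimodular submersion bookkeeping altogether.
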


\medskip

The article is organised as follows. In \Cref{Sec:Twisted Bundles} we introduce twisted principal bundles, focusing on their global properties and the geometry of invariant metrics. We also characterise bundle isometries between them through a parametrised version of Wilson's theorem on the isometries of nilmanifolds (\Cref{prop:param_wilson}). 

In \Cref{Sec:Ansatz} we introduce \Cref{Ansatz} for constructing invariant expanding Ricci solitons on twisted principal bundles, and prove that, in this context, the soliton equation reduces to the twisted harmonic-Einstein equations (\Cref{prop_AnsatthenHE}).  The need for considering twisted principal bundles (as opposed to a globally-defined free isometric Lie group action) is justified by \Cref{Prop:UntwistedSolitons}. 

In \Cref{sec_riemsurf} we restrict ourselves to the case $\dim B = 2$, and prove \Cref{maincor_Hit_comp}. We then 
introduce $\G$-Higgs bundles,  discuss the non-abelian Hodge correspondence, and prove \Cref{main:tHE-Higgs}. The proof of \Cref{maincor:isom_of_sols} is given in \Cref{sec:solitons_higgs}. The cases of dimensions $4$ and $5$, including the proofs of Corollaries \ref{maincor:dim4} and \ref{maincor:dim5}, are discussed in \Cref{sec:4D}.

In \Cref{Sec:ExpandingSolitons} we introduce affine expanding solitons, and prove that they
must indeed satisfy \Cref{Ansatz} (\Cref{thm_afine_then_ansatz}). This, together with the  results in \Cref{Sec:Ansatz},  finishes the proof of \Cref{main:eRS-tHE}. We also characterise the gradient condition (\Cref{prop:non-gradient}), showing in particular that a large class of expanders constructed in this paper are not gradient. 

The proof of the Einstein extension theorem \Cref{main_Einstein} is given in \Cref{sec:Einstein}. Finally, an appendix includes basic facts on nilsolitons that we use throughout the article.

\medskip

\noindent {\itshape Acknowledgements.\ }  The first-named author thanks John Lott for helpful comments about twisted principal bundles, and Peter Gothen and Nicholas Tholozan for their help with the theory of Higgs bundles. The second-named author thanks the organisers of the Institut Henri Poincar{\'e} program `Higher rank geometric structures' for inviting him to attend the program, and Oscar Garc{\'i}a-Prada for explaining to him the subtleties of Higgs bundles with disconnected structure groups. 

RL was supported by The University of Queensland through a Foundation Research Excellence Award, and by the Australian Research Council projects DP240101772 and FT240100361. AT was funded by the Deutsche Forschungsgemeinschaft (DFG, German Research Foundation) under Germany's Excellence Strategy EXC 2044 –390685587, Mathematics Münster: Dynamics–Geometry–Structure and by an Australian Government Research Training Program (RTP) Scholarship. Part of this research was carried out during a meeting at the  MATRIX institute in Creswick, Australia, and we are grateful for the hospitality and optimal working conditions.

\medskip

\section{Twisted Principal Bundles}\label{Sec:Twisted Bundles}

In this section we define a class of manifolds with local symmetries that will serve as the underlying spaces for the Ricci solitons we construct. These spaces generalise the notion of a principal bundle, allowing for more phenomena to occur. Our treatment is based on \cite{Lott20} (see also \cite{Lott10}*{$\S$4.1}).

\begin{definition}\label{def_tpb}
Let $\sans{N}$ be a  connected Lie group and $B$ a smooth manifold. 
A \emph{twisted principal $\sans{N}$-bundle} is a fibre bundle \(\pi:M\to B\) with fibre \(\sans{N}\) and  structure group contained in \(\Aut(\N)_\delta\ltimes \sans{N}_L\). (Here, the subscript $\delta$ means that the group $\Aut(\N)$ is endowed with the discrete topology, and $\sans{N}_L \leq \Diff(\N)$ denotes the action of $\N$ on itself  by left multiplication.)
\end{definition}

Notice that, when the structure group is contained in $\N_L$,  one recovers the definition of a principal $\N$-bundle. In that case, there is a natural free right $\N$-action which is transitive on fibres. A similar thing happens for a twisted principal bundle $\pi: M \to B$. Consider a covering $\{U_i\}$ of $B$ and local trivialisations 
\[
    \psi_i : M_{U_i} := \pi^{-1}(U_i) \to U_i \times \N.
\]
For each $i$ there is an obvious right free \(\sans{N}\)--action  coming from right-multiplication on the second factor:
\begin{equation}\label{eqn:cdot_i}
\cdot_i : M_{U_i} \times \N \to M_{U_i}, \qquad 
    \psi_i^{-1}(b,n) \cdot_i n' \mapsto \psi_i^{-1}(b, n \cdot n'). 
\end{equation}
We denote by $\tau_{ij} : U_{ij} \to \Aut(\N)_\delta\ltimes \sans{N}_L$ the transition function on a connected component $U_{ij}$ of $U_i\cap U_j \neq \emptyset$:
\[
     \psi_j \circ \psi_i^{-1} (b,n)  = (b,(\tau_{ij}(b)) (n) ), \qquad b\in U_{ij}, \quad n\in \N.
\]
Since  $\N_L$ commutes with right-multiplication, there exists $\varphi_{ij} \in \Aut(\N)$ such that
\begin{equation}\label{eqn_varphiij}
	p\cdot_i n = p\cdot_j \varphi_{ij}(n),\qquad n\in \sans{N}, \qquad p \in M_{U_{ij}}.
\end{equation}
The fact that $\varphi_{ij}$ is independent of $b$ is  due to  the choice of discrete topology  on $\Aut(\N)$. The upshot is that, even though the action of $\N$ is not globally well-defined, there are well-defined transformation groups $\N_b \subset \Diff(M_b)$ of each fibre $M_b = \pi^{-1}(b)$ that act simply-transitively. These $\N_b$ vary smoothly with $b$, in the sense that they come from transformation groups $\N_U \subset \Diff(M_U)$. An isomorphism $\N_b \simeq \N$ depends of course on the choice of local trivialisation, and for different choices, two such isomorphisms differ by  $\varphi_{ij} \in \Aut(\N)$. 

\begin{definition}\label{def_invariant}
A tensor field on $M$ is called \emph{invariant}, if in local trivialisations $U_i \times \N$ the corresponding tensor is invariant under \eqref{eqn:cdot_i}.
\end{definition} 
Since the local actions yield a well-defined transformation group $N_{U_i}$ of $M_{U_i} = \pi^{-1}(U_i)$ independent of the trivialisation,  invariance is indeed well defined.

\smallskip

From Definition \ref{def_tpb} and the general theory of fibre bundles, any twisted principal bundle is  associated  to some $(\Aut(\sans{N})\ltimes \sans{N}_L)$-principal bundle $\mathcal{P} \to B$, 
whose transition functions have locally constant $\Aut(\sans{N})$-component. The homomorphism $\Aut(\sans{N})\ltimes \sans{N} \to \Aut(\sans{N})$ associates to $\mathcal{P}$ a principal $\Aut(\sans{N})$-bundle  $P \to B$ with
locally constant transition functions, and thus a canonical flat connection with monodromy 
\[
    \rho : \pi_1(B) \to \Aut(\N)
\]
(which is only well-defined up to conjugation).
Notice that the structure group of $M$ is thus contained in $\rho(\pi_1(M))_\delta \ltimes \N_L$. This \emph{twisting homomorphism} $\rho$ measures the failure of the above local $\sans{N}$-actions \eqref{eqn:cdot_i} to glue up to a global $\sans{N}$-action on $M$. Indeed, if $\rho$ is trivial, then $P$ is isomorphic to the trivial bundle $B \times \Aut(\sans{N})$ with the flat connection corresponding to the trivial connection (cf.~\cite{KobayashiNomizu63}*{Ch.~II, Coro.~9.2}, noticing that trivial holonomy is enough). Using this isomorphism one can easily construct local trivialisations for $P$, and hence also for $\mathcal{P}$ and $M$, with transition functions having no $\Aut(\sans{N})$-component. That is, $\pi: M \to B$ is a principal $\sans{N}$-bundle.

If $f: B' \to B$ is a smooth map, the pullback bundle $f^* M \to B'$ is again a twisted principal $\N$-bundle, as it shares the same structure group. The twisting homomorphism $\rho'$ may differ though: say $B' = \tilde B$ is the universal cover of $B$, and let $\tilde \pi : \tilde M \to \tilde B$ denote the pullback bundle. Then $\rho'$ is trivial, hence the bundle is principal. In this way, we may view any twisted principal bundle as a quotient $M = \tilde M / \pi_1(B)$ of a principal $\N$-bundle, where $\pi_1(B)$ acts on $\tilde M$ by \((b,x)\cdot \gamma = (b \cdot \gamma,\rho(\gamma^{-1})x)\).

\begin{example}[The M\"obius band]
The simplest example of a (non-principal) twisted principal bundle is the M\"obius band, i.e.~the non-trivial line bundle $M^2 \to S^1$. The twisting homomorphism maps the generator of $\Z \simeq \pi_1(S^1)$ to the map $x\mapsto -x$. 
\end{example}

\begin{example}[Lie group quotients]\label{ex:RxN}
Generalising the previous example, let $\G = \R \ltimes_\theta \N$ be a semidirect product of Lie groups defined by a homomorphism $\theta : \R \to \Aut(\N)$. There is a properly discontinuous action of $\Z$ on $\G$ given by
\[
    k \cdot (t, n) = (t + 2 \pi k, \theta(2\pi k) n),
\]
thus $M := \G/\Z $ is a smooth manifold. The projection $\G \to \G/\N \simeq \R$ induces a well-defined submersion 
\[
\pi : M \to S^1, \qquad [(t,n)]_M \mapsto [t]_{\R/\Z}.
\]
Then $M$ is a twisted principal $\N$-bundle, with twisting homomorphism $\rho : \pi_1(S^1)\simeq \Z \to \Aut(\N)$, $\rho(k) = \theta(2 \pi k)$.
\end{example}

\subsection{The canonical vertical connections}\label{sec:can_vert}

A key property of twisted principal bundles is the existence of canonically-defined affine connections $\nabla^{\vca,b}$ on the tangent bundle of each fibre $M_b$, that depend smoothly on $b\in B$, and are flat and with parallel torsion.  These connections play a key role in the theory of collapsing Riemannian manifolds with bounded curvature \cites{Fu89,CFG}. 

The construction is as follows (see \cite{CFG}{$\S\S$3--4} for further details). Recall that on the tangent bundle of a simply-connected Lie group $\N$ there is a canonical affine connection $\nabla^{\rm R}$  defined by declaring  the right-invariant vector fields to be parallel. (In \cite{CFG} they use $\nabla^{\rm L}$; our treatment follows the conventions in \cite{NT18}, with the local isometric action being by $\N_R$.) The curvature of $\nabla^{\rm R}$ vanishes thanks to the Jacobi identity, and its torsion is clearly parallel. Conversely, if a simply-connected smooth manifold admits a flat affine connection with parallel torsion, then it is diffeomorphic to a Lie group \cite{KT68}.  The group of diffeomorphisms of $\N$ preserving $\nabla^{\rm R}$ is $\Aff(\N,\nabla^{\rm R}) = \Aut(\N) \ltimes \N_R$. Of course, we also have $\N_L \leq \Aff(\N)$  via the embedding $t \mapsto ( C_{t^{-1}}, t)$, where $C_x$ denotes conjugation by $x$.

Since by assumption a twisted principal bundle has structure group contained in $\Aff(\N)$, it is clear that each fibre $M_b$ has a well-defined affine connection $\nabla^{\vca,b}$ which makes it affine-diffeomorphic to $(\N, \nabla^{\rm R})$.

\begin{definition}
An \emph{isomorphism of twisted principal $\N$-bundles} is a bundle map  which acts on the fibres by affine transformations. 
\end{definition}

If $f: M \to M'$ is an isomorphism of twisted principal bundles covering $\check f : B \to B'$, then it can be written in appropriate trivialisations $U_i \times \N$, $U'_j \times \N$ as $f(u,n)= (\check f(u), \varphi_u(n))$ for some $\varphi : U_i \to \Aut(\N)$.

\subsection{The adjoint bundle and connections on twisted principal bundles}

Recall that the adjoint bundle of a principal $\N$-bundle is the associated bundle for the adjoint representation \(\Ad:\sans{N}\to \GL(\mathfrak{n})\), obtained by composing the transition functions with $\Ad$. 
Associated to any twisted principal bundle \(\pi:M\to B\) there is an analogous construction: a Lie algebra bundle $\ad M \to B$, also called the adjoint bundle.

To define it, given a bundle atlas $\{U_i\times \N\}$ for  $\pi : M \to B$, we let $U_i\times \ngo$ be the vector bundle over $U_i$ whose sections are the vertical right-invariant fields in $U_i\times \N$. Composing the transition functions of $\{U_i\times \N\}$ with the map 
\begin{equation}\label{Eq:LieAlgaction}
	\Aut(\sans{N}) \ltimes \sans{N} \to \Aut(\mathfrak{n}),  \qquad (\varphi , n) \mapsto d \varphi_e \circ \Ad(n)
\end{equation}
we obtain the data to define a vector bundle $\ad M$ over $B$. Since the structure group is contained in $\Aut(\ngo)$, the fibres of $\ad M$ have a well-defined Lie algebra structure. (Equivalently, one may define $\ad M$ as the vector bundle associated to the $(\Aut(\N)\ltimes \N)$-principal bundle $\mathcal{P}\to B$ via the representation  \eqref{Eq:LieAlgaction}.)

Let $\vca := \ker d \pi$ be the vertical distribution on $M$. The key property of the adjoint bundle, which holds by construction, is that local sections of $\ad M$ are in one-to-one correspondence with invariant sections of $\vca$, i.e.~vertical invariant vector fields on $M$.  More generally, the same is true for  sections of tensor products of $\ad M$ and the corresponding invariant tensor fields on $M$. 

Notice also that an isomorphism of twisted principal bundles $f : M_1 \to M_2$ maps vertical invariant fields onto vertical invariant fields, thus inducing a vector bundle map 
\[
    f_\ad : \ad M_1 \to \ad M_2
\]
that preserves the Lie algebra structure of the fibres.

\smallskip

There is a straightforward generalisation of the notion of principal connection to this setting:

\begin{definition}\label{def_connection}
A \emph{connection} on a twisted principal bundle \(\pi:M\to B\) is an invariant distribution \(\mathcal{H} \subset TM\) complementary to \(\mathcal{V} = \ker d\pi\).
\end{definition}

Precisely as in the principal bundle setting, one may equivalently view a connection as the kernel of an invariant, $\vca$-valued one-form on $M$. Its restriction to each trivialisation $U_i\times \N$ is an $\ngo$-valued one-form satisfying the usual properties of a principal connection.
In this article, connections will often arise as orthogonal complements of \(\mathcal{V}\subset TM \) with respect to some invariant metric.

An important feature of a connection $\hca \subset TM$ is its holonomy. Given $b\in B$ and a loop $\gamma : [0,1]  \to B$ with $\gamma(0) = \gamma(1) = b$, consider for each $p\in M_b$ the unique $\hca$-horizontal lift $\hat \gamma_p : [0,1] \to M$ with $\hat \gamma_p(0) = p$. 
\begin{definition}\label{def_holo}
    The \emph{holonomy} of the connection $\hca$ at $b\in B$ with respect to the loop $\gamma$ is the map
    \[
        M_b \to M_b, \qquad p \mapsto \hat \gamma_p(1).
    \]
\end{definition}
Considering all possible loops based at $b$, and fixing an identification $M_b\simeq \N$ coming from a local trivialisation, we obtain the so-called \emph{holonomy group}, which is naturally a subgroup of the structure group (which is itself a subgroup of $\rho(\pi_1(B)) \ltimes \N$). It is well-defined up to conjugation.

\smallskip

For most of the article (except for \Cref{Sec:ExpandingSolitons}), we will be interested in \emph{flat connections}: the case where \(\mathcal{H}\) is an integrable distribution. Let $\hca$ be a flat connection on $M$, and fix a fibre $M_b$ and an identification $M_b\simeq \N$. By flatness, the horizontal lift of piecewise-smooth loops in $B$ based at $b$ is independent of the homotopy class of the loop, and thus gives rise to a well-defined \emph{monodromy representation}
\begin{equation}\label{eqn:rhohca}
    \rho_\hca : \pi_1(B) \to \rho(\pi_1(B))\ltimes \N.
\end{equation}
Clearly, the image of $\rho_\hca$ must contain $\rho(\pi_1(B))$, but there may be also a translational part. We will often be dealing  with flat connections whose monodromy $\rho_\hca$ equals the twisting homomorphism $\rho$.

\begin{example}\label{ex_rho_hca}
Let $\rho_\hca \in \Hom(\pi_1(B), \Aut(\N)_\delta \ltimes \N)$ be a homomorphism and set $\tilde M := \tilde B \times \N$. The diagonal right action of $\pi_1(B)$ on $\tilde M$ given by \((b,x)\cdot \gamma = (b \cdot \gamma,\rho_\hca(\gamma^{-1})x)\) is properly discontinuous and $M:= \tilde M / \pi_1(B)$ is a twisted principal $\N$-bundle over $B$, with twisting homomorphism $\rho := q \circ \rho_\hca$, where $q : \Aut(\N)\ltimes \N \to \Aut(\N)$ is the `quotient by $\N$' projection. The trivial connection $T \tilde B$ on $\tilde M$ induces a connection $\hca$ on $M$ with monodromy $\rho_\hca$. It is not hard to see that in fact we have 
\[
    M = \tilde B \times_{\rho_{\hca}} \N.
\]
\end{example}

We note that flat connections always exist when $\N$ is contractible (in particular, for $\N$ simply-connected and nilpotent):

\begin{prop}\label{prop_structure_rho}
Let \(\sans{N}\) be a contractible Lie group and let \(M\) be a twisted principal \(\sans{N}\)--bundle with twisting homomorphism \(\rho:\pi_1(B)\to \Aut(\sans{N})\). Then, 
there exists a flat connection $\hca$ on $M$ with monodromy $\rho$. Moreover, given such a connection, there is an isomorphism of flat twisted principal bundles $M \simeq \tilde B \times_\rho \N.$
\begin{proof}
Let $\pca \to B$ be the principal $(\rho(\pi_1(B))_\delta\ltimes \N)$-bundle to which $M$ is associated. Since $(\rho(\pi_1(B))\ltimes \N) / \rho(\pi_1(B)) \simeq \N$ is contractible by assumption, there is a reduction of the structure group of $\pca$ to $\rho(\pi_1(B))_\delta$ \cite{KobayashiNomizu63}*{Ch.~I, Prop.~5.6 $\&$ Thm.~5.7}. Consider a bundle atlas for $\pca$ associated to a cover $\{U_i\}$ of $B$, with locally constant transition functions taking values in $\rho(\pi_1(B))$.  As $M$ is the associated bundle, a bundle atlas for it is given by $\{U_i \times \N\}$, with the same transition functions. We notice that this data is also the data defining the associated bundle to the principal $\pi_1(B)$-bundle $\tilde B \to B$ with respect to $\rho$. Hence, $M \simeq \tilde B \times_\rho \N$. 
\end{proof}
\end{prop}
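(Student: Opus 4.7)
The strategy is to reduce the structure group of an associated principal bundle to the discrete group $\rho(\pi_1(B))_\delta$ using the contractibility of $\sans{N}$, and then read off both the flat connection $\hca$ and the isomorphism with $\tilde B \times_\rho \sans{N}$ from the resulting bundle atlas. Concretely, I would consider the principal $(\rho(\pi_1(B))_\delta \ltimes \sans{N}_L)$-bundle $\pca \to B$ to which $M$ is associated, whose transition functions have locally constant $\Aut(\sans{N})$-component given by $\rho$ (as discussed in the paragraph preceding the statement).

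The first step is to show that the structure group of $\pca$ can be reduced to the discrete subgroup $\rho(\pi_1(B))_\delta$. Such a reduction is equivalent to a global section of the associated fibre bundle with typical fibre $(\rho(\pi_1(B))_\delta \ltimes \sans{N}_L)/\rho(\pi_1(B))_\delta \simeq \sans{N}$; since $\sans{N}$ is contractible and $B$ is paracompact, this section exists by standard obstruction theory, e.g.~\cite{KobayashiNomizu63}*{Ch.~I, Prop.~5.6 \& Thm.~5.7}. The reduction yields a bundle atlas $\{U_i \times \sans{N}\}$ for $M$ with locally constant transition functions taking values in $\rho(\pi_1(B))_\delta \subset \Aut(\sans{N})_\delta$; in particular the $\sans{N}_L$-component vanishes. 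Consequently, the product distributions $T U_i \times \{0\} \subset T(U_i \times \sans{N})$ are preserved under the transitions and glue to an invariant, integrable distribution $\hca \subset TM$ complementary to $\vca$. Parallel transport along a loop representing $\gamma \in \pi_1(B)$ composes the locally constant $\Aut(\sans{N})$-valued cocycles along the loop, recovering $\rho(\gamma)$ with no translational part, so the monodromy of $\hca$ is precisely $\rho$. For the second claim, the same atlas and transition cocycle identify $M$ with the quotient of $\tilde B \times \sans{N}$ by the diagonal $\pi_1(B)$-action $(b,n)\cdot\gamma = (b\gamma,\rho(\gamma^{-1})n)$, that is $\tilde B \times_\rho \sans{N}$; and since the identification acts by affine transformations on fibres (in fact by elements of $\rho(\pi_1(B))$), it is an isomorphism of twisted principal bundles in the sense of \Cref{def_tpb}.

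The main obstacle is the reduction-of-structure-group step: the calculation of the monodromy and the identification with $\tilde B \times_\rho \sans{N}$ are essentially bookkeeping once a locally constant atlas is in hand, but one must invoke contractibility of $\sans{N}$ (not merely simple-connectedness) together with paracompactness of $B$ to apply the existence of sections, and verify that the reduction genuinely lands inside the discrete factor $\rho(\pi_1(B))_\delta$ rather than a conjugate thereof that would contribute an unwanted translational part to the monodromy. A secondary point to verify is that the connection $\hca$ so constructed is invariant under the local $\sans{N}$-actions \eqref{eqn:cdot_i}, which follows because those actions act on the product $U_i \times \sans{N}$ by right multiplication in the second factor and hence preserve $TU_i \times \{0\}$.
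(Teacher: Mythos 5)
Your proposal is correct and follows essentially the same route as the paper: reduce the structure group of the associated principal $(\rho(\pi_1(B))_\delta\ltimes \sans{N}_L)$-bundle $\pca$ to the discrete factor via contractibility of $\sans{N}$ and Kobayashi--Nomizu, then read off the locally constant atlas for $M$ and identify it with $\tilde B\times_\rho\sans{N}$. The one place where you go beyond the paper's exposition is in explicitly constructing the flat connection $\hca$ from the product distributions $TU_i\times\{0\}$, verifying its invariance under the local $\sans{N}$-actions, and computing its monodromy; the paper leaves this step implicit in the locally constant atlas, so your version is a harmless (and arguably welcome) elaboration of the same argument.
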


\begin{remark}
Comparing \Cref{ex_rho_hca} and \Cref{prop_structure_rho}, we see that $\tilde B \times_{\rho_{\hca}} \N$ and $\tilde B \times_\rho \N$ are isomorphic as twisted principal bundles; however, this isomorphism need not preserve the flat connections (the monodromy $\rho_\hca$ may have a non-trivial translational part).
\end{remark}

If \(M\simeq\tilde{B}\times_\rho \sans{N}\) is as in  \Cref{prop_structure_rho}, the adjoint bundle satisfies $	\ad M \simeq \tilde{B}\times_{\rho} \mathfrak{n}$. It
inherits a flat  connection whose monodromy is still \(\rho\) (under the identification \(\Aut(\sans{N})\simeq \Aut(\mathfrak{n})\), assuming that $\N$ is simply-connected). 
From the principal bundle situation one can deduce the following important correspondence:

\begin{prop}\label{prop_sectionsadM}
There is a one-to-one correspondence between:
\begin{enumerate}
	\item[(i)] Invariant vertical vector fields on $ M = \tilde B \times_\rho \N$; 
	\item[(ii)] Sections of $\ad M = \tilde B \times_\rho \ngo$;
    \item[(iii)] $\rho$-equivariant maps $s : \tilde B \to \mathfrak{n}$:
	\begin{equation}\label{eqn_Btilde_n}
		s(b\cdot \gamma) = \rho(\gamma^{-1})\cdot s(b),\qquad \gamma\in \pi_1(B), \quad b\in B.
	\end{equation}
\end{enumerate}
\end{prop}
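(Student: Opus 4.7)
My plan is to split the three-way correspondence into the two bijections (i)$\Leftrightarrow$(ii) and (ii)$\Leftrightarrow$(iii), and to handle each separately. The first will drop out of the very definition of $\ad M$ via its transition functions; the second is a direct application of the standard identification between sections of an associated bundle and equivariant maps from the total space of the corresponding principal bundle.

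For (ii)$\Leftrightarrow$(iii), I would use the description $\ad M \simeq \tilde B \times_\rho \ngo$ recorded right after \Cref{prop_structure_rho}, and view $\tilde B \to B$ as a principal $\pi_1(B)$-bundle with $\pi_1(B)$ acting on $\ngo$ through $\rho$. A section $\sigma : B \to \ad M$ lifts uniquely to a section of the pullback bundle on $\tilde B$, which in the trivialisation $\tilde B \times \ngo$ has the form $b \mapsto (b, s(b))$ for a unique smooth map $s : \tilde B \to \ngo$. The compatibility that $\sigma$ descend to $B = \tilde B/\pi_1(B)$ translates exactly into the $\rho$-equivariance \eqref{eqn_Btilde_n}, and conversely every such equivariant $s$ defines a section. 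Nothing further is required.

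For (i)$\Leftrightarrow$(ii) I would argue via local trivialisations. Fix a bundle atlas $\{U_i \times \N\}$ for $M$ with transition functions valued in $\Aut(\N)_\delta \ltimes \N_L$. Over each $U_i$ there is a canonical bijection between invariant vertical vector fields on $U_i \times \N$ and smooth maps $U_i \to \ngo$: invariance forces fibrewise right-invariance on $\N$, so such a field is determined by its value at the identity in each fibre, depending smoothly on the base point. By construction of $\ad M$, the same smooth maps also parametrise local sections of $\ad M|_{U_i}$. It then remains to check that the two resulting cocycles on $U_i \cap U_j$ agree; this reduces to verifying that left translation by $n \in \N$ acts on right-invariant fields via $\Ad(n)$ at the identity, and that $\varphi \in \Aut(\N)$ acts by $d\varphi|_e$ at the identity, both being short calculations using the commutation relations $L_n \circ R_x = R_x \circ L_n$ and $\varphi \circ R_x = R_{\varphi(x)} \circ \varphi$. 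Composing these reproduces the homomorphism \eqref{Eq:LieAlgaction} used to define $\ad M$, so the local bijections glue into a global one.

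The only genuinely computational step is this last cocycle verification; everything else is formal. I do not anticipate a real obstacle beyond notational bookkeeping, since the statement is in essence a translation between invariance of vertical vector fields and the associated-bundle description of $\ad M$.
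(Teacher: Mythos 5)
Your proposal is correct and takes exactly the same route as the paper's (very brief) proof: the paper also splits the statement into (i)$\Leftrightarrow$(ii), attributed to the definition of $\ad M$ via the transition cocycle \eqref{Eq:LieAlgaction}, and (ii)$\Leftrightarrow$(iii), attributed to the standard sections-of-associated-bundles correspondence (cited to Michor). Your write-up simply fills in the cocycle verification and the equivariance argument that the paper leaves to those references; just make sure the order of composition in $(\varphi,n)\mapsto d\varphi_e\circ\Ad(n)$ matches the paper's convention for the $\Aut(\N)_\delta\ltimes\N_L$-action on the fibre when you carry out the bookkeeping.
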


\begin{proof}
The equivalence between (i) and (ii) follows by the construction of $\ad M$, as mentioned above. The equivalence between (ii) and (iii) is a standard result on sections of a bundle (here $\ad M$) associated to a principal bundle (here, the universal cover $\tilde B\to B$), see e.g.~\cite{Michor_book}*{18.12}. 
\end{proof}

Similarly, we have the analogous result for arbitrary tensor fields; we state it in the case of symmetric $(0,2)$-tensors:

\begin{prop}\label{prop_metricsadM}
There is a one-to-one correspondence between:
\begin{enumerate}
	\item[(i)] Invariant, symmetric $(0,2)$-tensors on the vertical distribution $\vca$ of $ M = \tilde B \times_\rho \N$; 
	\item[(ii)] Sections of $\Sym^2(\ad M)$;
    \item[(iii)] $\rho$-equivariant maps $s : \tilde B \to \Sym^2(\ngo^*)$: 
	\begin{equation}
		s(b\cdot \gamma) = \rho(\gamma^{-1})\cdot s(b),\qquad \gamma\in \pi_1(B), \quad b\in B.
	\end{equation}
    The action of $\pi_1(B)$ on $\Sym^2(\ngo^*)$ is obtained from composing $\rho : \pi_1(B) \to \Aut(\ngo) \subset \GL(\ngo)$ with the natural extension of the standard $\GL(\ngo)$-action on $\ngo$ to $\Sym^2(\ngo^*)$ \eqref{eqn:GLnacts_sym2}.
\end{enumerate}
\end{prop}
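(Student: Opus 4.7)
The plan is to proceed in complete analogy with \Cref{prop_sectionsadM}, replacing the fibrewise model $\ngo$ with its symmetric $(0,2)$-tensor analogue. The two equivalences should be handled separately: (i)$\Leftrightarrow$(ii) is a local/invariance statement, while (ii)$\Leftrightarrow$(iii) is the standard identification of sections of an associated bundle with equivariant maps on the total space of a principal bundle.

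For (i)$\Leftrightarrow$(ii), I would recall from \Cref{sec:can_vert} that local sections of $\ad M$ correspond precisely to vertical invariant vector fields on $M$. Taking symmetric tensor products, sections of $\Sym^2(\ad M)$ (viewed dually, so that its fibre at $b$ consists of symmetric bilinear forms on the fibre $(\ad M)_b$) correspond to invariant symmetric bilinear forms on $\vca$. This is a pointwise verification: in any trivialisation $U_i\times \N$ a right-invariant vertical vector field is determined by its value at the identity, so an invariant symmetric bilinear form on $\vca|_{M_{U_i}}$ is determined by a smooth map $U_i \to \Sym^2(\ngo^*)$, and the transition functions on $\ad M$ induced by \eqref{Eq:LieAlgaction} (restricted to $\Aut(\ngo)$, since the inner $\Ad(n)$--part acts trivially on invariant tensors after passing to the identity) glue these local maps into a global section of $\Sym^2(\ad M)$.

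For (ii)$\Leftrightarrow$(iii), I would invoke \Cref{prop_structure_rho} to write $\ad M \simeq \tilde B \times_\rho \ngo$, and then use the naturality of the associated-bundle construction under taking tensor products to conclude that
\[
\Sym^2(\ad M) \simeq \tilde B \times_\rho \Sym^2(\ngo^*),
\]
where $\pi_1(B)$ acts on $\Sym^2(\ngo^*)$ via the dual of the action of $\rho$ on $\ngo$, extended to $\Sym^2$ as in \eqref{eqn:GLnacts_sym2}. The standard correspondence between sections of bundles associated to a principal bundle and equivariant maps (as in \cite{Michor_book}*{18.12}, invoked in the proof of \Cref{prop_sectionsadM}) then yields that sections of $\Sym^2(\ad M)$ are exactly the $\rho$-equivariant maps $s:\tilde B \to \Sym^2(\ngo^*)$ satisfying the stated transformation law.

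I do not anticipate any genuine obstacle: the only thing that needs slight care is the consistent handling of the duality $\ngo$ versus $\ngo^*$ and the compatibility of the action on $\Sym^2(\ngo^*)$ with the identification $\Aut(\N)\simeq\Aut(\ngo)$ for simply-connected $\N$, which is precisely what \eqref{eqn:GLnacts_sym2} is set up to encode. Accordingly, the proof should be brief: state the two identifications, point to \Cref{prop_sectionsadM} for the structural input, and note that tensoriality delivers the rest.
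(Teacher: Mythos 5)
Your approach is correct and is exactly what the paper has in mind: the paper offers no proof for \Cref{prop_metricsadM}, it merely says ``Similarly, we have the analogous result for arbitrary tensor fields,'' pointing at \Cref{prop_sectionsadM} and the associated-bundle correspondence from \cite{Michor_book}*{18.12}. Your two-step argument, (i)$\Leftrightarrow$(ii) by pointwise identification in a right-invariant frame and (ii)$\Leftrightarrow$(iii) by the natural isomorphism $\Sym^2\big((\ad M)^*\big)\simeq \tilde B\times_\rho \Sym^2(\ngo^*)$ followed by the standard sections/equivariant-maps dictionary, is the proof, and your observation that $\Sym^2(\ad M)$ in the statement should be read dually (so that fibres are symmetric bilinear forms) correctly disambiguates a mild abuse of notation the paper itself commits (compare \(\Gamma\Sym^2(\ad M)^*\) in \Cref{def_tensionfield}).

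One small inaccuracy worth fixing: your parenthetical claim that ``the inner $\Ad(n)$-part acts trivially on invariant tensors after passing to the identity'' is not the right justification and is false in general --- $\Ad(n)$ certainly acts nontrivially on $\Sym^2(\ngo^*)$ unless the tensor happens to be $\Ad$-invariant, which a generic fibre metric is not. The correct reason the $\Ad(n)$-factor does not appear is that, after applying \Cref{prop_structure_rho} and working in the adapted bundle atlas for $M\simeq\tilde B\times_\rho\N$, the structure group has already been reduced to $\rho(\pi_1(B))\le\Aut(\N)$, so the transition functions of $\ad M$ lie in $\Aut(\ngo)$ with no $\N_L$-component at all. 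This does not affect the validity of your argument, since you immediately pass to that reduced atlas in the (ii)$\Leftrightarrow$(iii) step; it is only the stated reason that is off.
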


\subsection{Invariant metrics}

Let \(\pi:M\to B\) be a twisted principal $\sans{N}$-bundle. Recall that there is a well-defined notion of invariance for tensor fields on $M$ (\Cref{def_invariant}). In particular, we may consider invariant Riemannian metrics on $M$. These have a simple description in terms of the pullback bundle $\tilde M \to \tilde B$, which is both a principal $\N$-bundle over $\tilde B$, and a principal $(\pi_1(B)\ltimes_\rho \N)$-bundle over $B$:

\begin{prop}
A metric \(g\) on \(M\) is invariant if and only if its pullback $\tilde g$ to \(\tilde{M}\) is \(\pi_1(B)\ltimes_\rho \sans{N}\)-invariant in the usual sense. 
\end{prop}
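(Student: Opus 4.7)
The plan is to unwind the two notions of invariance and match them via the covering $p: \tilde M \to M$. Recall that $\tilde M = f^* M$ is the pullback of $M$ along $f: \tilde B \to B$, and as explained in the preceding discussion, this pullback trivialises the $\Aut(\N)$-part of the cocycle. Hence $\tilde M \to \tilde B$ is a genuine principal $\N$-bundle, carrying a globally defined free right $\N$-action $R_n : \tilde M \to \tilde M$. Together with the $\pi_1(B)$-action on $\tilde M$ by deck transformations of the cover $p$ (given in the formula $(b,x)\cdot \gamma = (b\cdot\gamma,\rho(\gamma^{-1})x)$), this realises $\tilde M$ as a principal $(\pi_1(B)\ltimes_\rho\N)$-bundle over $B$, and the statement refers to invariance under this combined action.

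The key local picture is that, given any trivialisation $\psi_i : \pi^{-1}(U_i) \to U_i \times \N$ of $M$ and any connected component $\tilde U$ of $f^{-1}(U_i) \subset \tilde B$, the pullback trivialisation identifies $\tilde\pi^{-1}(\tilde U) \simeq \tilde U \times \N$ in such a way that the restriction of the global right $\N$-action $R_n$ on $\tilde M$ corresponds, under $p$, to the local action $\cdot_i$ on $M$ defined in \eqref{eqn:cdot_i}. This is precisely the content of the fact that on the universal cover, the twisting homomorphism $\rho$ is absorbed into the $\pi_1(B)$-deck action, so that the locally defined $\N$-actions on $M$ lift to a single globally defined one on $\tilde M$.

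With this setup in hand, both implications become straightforward. For $(\Rightarrow)$, assume $g$ is invariant. Then $\tilde g = p^*g$ is automatically $\pi_1(B)$-invariant, since $p$ factors as a quotient by the deck group. For the $\N$-invariance, pick any $\tilde x \in \tilde M$, choose a local trivialisation as above, and observe that the identity $R_n^*\tilde g = \tilde g$ on this neighbourhood is exactly the pullback under $p$ of invariance of $g$ under $\cdot_i$ in $U_i \times \N$; since this holds near every point and $\N$ is connected, $R_n^*\tilde g = \tilde g$ globally. Conversely, $(\Leftarrow)$ assume $\tilde g$ is $(\pi_1(B)\ltimes_\rho\N)$-invariant. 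Its $\pi_1(B)$-invariance implies it descends to a well-defined metric on $M$, which must coincide with $g$ since $\tilde g = p^*g$. The $\N$-invariance on each trivialisation $\tilde U \times \N$ descends via $p$ to invariance of $g$ under $\cdot_i$ in the corresponding trivialisation of $M$, yielding invariance of $g$ in the sense of \Cref{def_tpb}.

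There is no substantive obstacle here beyond careful bookkeeping; the one point deserving emphasis is the structural observation that the \emph{local} $\N$-actions on $M$, which fail to glue globally due to the $\Aut(\N)$-twisting, are resolved into a single \emph{global} right $\N$-action on $\tilde M$ precisely because passing to $\tilde B$ absorbs $\rho$ into deck transformations. This is what makes local invariance downstairs and global invariance upstairs the same condition.
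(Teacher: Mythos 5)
Your proof is correct and follows essentially the same route as the paper's: $\pi_1(B)$-invariance of $\tilde g$ is automatic from the covering, and the $\N$-invariance is matched through local trivialisations via the fact that the covering map restricted to a trivialising chart is $\N$-equivariant. Your version is a bit more explicit about why the local $\N$-actions on $M$ lift to a single global $\N$-action on $\tilde M$, but the argument is the same in substance.
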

\begin{proof}
Since \(\tilde{g}\) is the lift of \(g\), and \(\pi_1(B)\) is the group of deck transformations of the covering $\tilde M \to M$, it is clear that \(\tilde{g}\) is \(\pi_1(B)\)-invariant, regardless of the symmetries of $g$. On the other hand, let $\{U_i\}$ be a bundle atlas of $M$ with constant transition functions into $\Aut(\sans{N})$, and let $\{\hat U_i\}$ be the corresponding bundle atlas for $\tilde M$. Since the covering map \(\hat U_i \to U_i\) is $\sans{N}$-equivariant, the $\sans{N}$-action on $\hat U_i$ is $\tilde g$-isometric if and only if the $\sans{N}$-action on $U_i$ is $g$-isometric, and the proposition follows. 
\end{proof}

Given an invariant metric $g$ on $M$, there is a unique metric $g_B$ on $B$ such that $\pi : M \to B$ is a Riemannian submersion. We also get a connection $\hca := \vca^\perp$, and an invariant bundle metric $g|_{\vca\times \vca}$ on the vertical distribution.  The holonomy of $g$ is by definition the holonomy of the connection $\hca$ (\Cref{def_holo}).

In this context, \Cref{prop_metricsadM} yields:

\begin{prop}\label{prop_inv_metrics}
An invariant metric $g$ on  $M$ is determined by the data $(g_B, \hca, h)$, where:
\begin{enumerate}
	\item[(i)] $g_B$ is a Riemannian metric on the base \(B\),
	\item[(ii)]  $\mathcal{H} = (\ker d \pi)^\perp$ is a connection,  and
	\item[(iii)] $h$ is a bundle metric on $\ad M$.
\end{enumerate}
Moreover, if $\hca$ is flat with monodromy $\rho$, then it is determined by a flat affine connection $\nabla^\ad$ on $\ad M$, and (iii) may be replaced by
\begin{enumerate} 
	\item[(iii)'] $h$ is a $\rho$-equivariant map $h : \tilde B \to \sym^2_+(\mathfrak{n}^*)$.
\end{enumerate}
\end{prop}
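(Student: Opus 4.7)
The plan is to split an invariant metric $g$ on $M$ into its three Riemannian-submersion pieces, and then to re-encode the vertical component via the correspondence of \Cref{prop_metricsadM}; in the flat case, one further swaps a bundle metric on $\ad M$ for its $\rho$-equivariant lift to $\tilde B$.

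First, starting from an invariant $g$, I would set $\hca := \vca^\perp$. Invariance of both $\vca$ and $g$ forces $\hca$ to be invariant, so it is a connection in the sense of \Cref{def_connection}. Next, there is a unique metric $g_B$ on $B$ characterised by the requirement that $d\pi : (\hca_p, g_p) \to (T_{\pi(p)}B, g_B)$ is a pointwise linear isometry; it descends consistently to $B$ precisely because $g$ is invariant and the local $\N$-actions cover the identity of $B$. Finally, $g|_{\vca\times\vca}$ is an invariant, fibrewise positive, symmetric $(0,2)$-tensor on $\vca$, and \Cref{prop_metricsadM} identifies such tensors with bundle metrics $h$ on $\ad M$. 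Reversing the construction is equally direct: given $(g_B, \hca, h)$, I would declare $\vca \perp \hca$, require $d\pi|_\hca$ to be a pointwise isometry onto $(TB, g_B)$, and take $g|_{\vca \times \vca}$ to be the invariant tensor on $\vca$ corresponding to $h$ through \Cref{prop_metricsadM}. The two procedures are visibly mutually inverse; I do not foresee any real obstacle here, as the argument is the standard Riemannian-submersion decomposition with invariance imposed at every step.

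For the moreover clause, suppose $\hca$ is flat with monodromy $\rho$. By \Cref{prop_structure_rho} there is an isomorphism of twisted principal bundles $M \simeq \tilde B \times_\rho \N$, and the associated-bundle construction yields $\ad M \simeq \tilde B \times_\rho \ngo$ together with a canonical flat affine connection $\nabla^\ad$ of holonomy $\rho$; conversely, such a $\nabla^\ad$ records exactly the $\rho$-reduction of the structure group of the adjoint bundle, and hence the flat connection $\hca$ on $M$. Under this identification, the equivalence (ii)$\Leftrightarrow$(iii) of \Cref{prop_metricsadM} translates bundle metrics on $\ad M$ into $\rho$-equivariant maps $h : \tilde B \to \sym^2_+(\ngo^*)$, which is exactly (iii)'. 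The only point requiring some care---and the closest I see to a genuine obstacle---is the bookkeeping of the induced $\pi_1(B)$-action on $\sym^2(\ngo^*)$ and the verification that fibrewise positive-definiteness is preserved under it; both are immediate from the explicit description of the action furnished in \Cref{prop_metricsadM}.
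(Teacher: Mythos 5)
Your argument matches the paper's approach. The paper does not spell out a proof: it simply records, in the two sentences preceding the proposition, that an invariant $g$ yields a Riemannian-submersion metric $g_B$, the orthogonal complement $\hca := \vca^\perp$, and an invariant vertical metric, and then invokes \Cref{prop_metricsadM} (with \Cref{prop_structure_rho} for the flat case). Your write-up fills in the same steps in the same order, adding only the routine check that the decomposition is invertible, so it is correct and essentially identical in method.
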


When writing that an invariant metric $g$ is determined by the data $(g_B, \nabla^\ad, h)$ for a flat connection $\nabla^\ad$, we will implicitly assume that the monodromy of $\hca$ is $\rho$.

The following may be considered as a parametrised version of Wilson's theorem on the isometries of nilmanifolds \cite{Wil82}:

\begin{prop}\label{prop:param_wilson}
Let \(\pi_i:M_i\to B_i\), \(i=1,2\), be twisted principal $\N$-bundles,  $\N$  nilpotent and simply-connected, with invariant metrics \(g_i\) determined by $(g_{B_i}, \nabla^{\ad,i}, h_i)$, where $\nabla^{\ad, i}$ are flat connections on $\ad M_i$. Let 
\(f:(M_1,g_1)\to (M_2,g_2)\) be a fibre-preserving isometry covering a diffeomorphism $\check f : B_1 \to B_2$. Then, $f$ is a twisted principal bundle isomorphism, and:
\begin{enumerate}
\item[(i)] \(\check{f}:(B_1,g_{B_1})\to (B_2,g_{B_2})\) is an isometry;
\item[(ii)] \({f}_\ad: \ad M_1\to \ad M_2\) covers \(\check{f}\) and satisfies \begin{equation}\label{eqn_BMP}
 h_1= {f}_\ad^*h_2,\qquad  \nabla^{{\rm ad}, 1}={f}_\ad^*\nabla^{{\rm ad}, 2}.
\end{equation}
\end{enumerate}
Conversely, a Lie algebra bundle isomorphism \({f}_\ad:\ad M_1\to \ad M_2\) satisfying (i) and (ii) gives rise to a 
fibre-preserving isometry \(f:(M_1,g_1)\to (M_2,g_2)\).
\begin{proof}
Since \(f\) is a fibre-preserving isometry, we have  
\(f_*(\mathcal{V}_1)=\mathcal{V}_2\) and $f_* (\hca_1) = \hca_2$. Also, $f$ covers a diffeomorphism \(\check{f}:B_1\to B_2\), which is isometric since for \(p\in \pi_1^{-1}(b)\subset M_1\), its differential is a composition of local isometries: \[d\check{f}_b = d\pi_2\circ df_p\circ d\pi_1|_{\mathcal{H}_1}^{-1}.\] 

Given \(b\in B_1\), the restricted map \( f|_{M_b}:M_{1,b}\to M_{2,\check{f}(b)}\) is an isometry between  fibres. Since the fibres admit simply-transitive isometric actions by a nilpotent Lie group, \cite{Wil82} implies that 
\[
    f|_{M_b} : (M_{1,b}, \nabla^{\vca_1, b}) \to (M_{2, \check f(b)}, \nabla^{\vca_2, \check f(b)})
\]
is an affine diffeomorphism with respect to the canonical vertical connections (\Cref{sec:can_vert}). Hence, $f$ is an isomorphism of twisted principal $\N$-bundles. It therefore preserves vertical invariant fields, thus giving rise to a vector bundle isomorphism $f_\ad : \ad M_1 \to \ad M_2$ which is of course a fibrewise isometry with respect to the bundle metrics $h_1, h_2$.  From $f_* (\hca_1) = \hca_2$  we deduce that $f_\ad^* \nabla^{\ad, 2} = \nabla^{\ad, 1}$.

Conversely, suppose that we are given \(f_\ad\) and \(\check{f}\) satisfying (i) and (ii). By replacing \(M_2\) with \(\check{f}^*M_2\), we may assume that \(f_\ad\) covers the identity. Hence, we can consider \(f_\ad\) as a section of \((\ad M_1)^*\otimes(\ad M_2)\). By \eqref{eqn_BMP}, \(f_\ad\) is parallel with respect to the induced flat connection: hence, we can identify \(f_\ad \) with an element of \(\Aut(\mathfrak{n})\) such that \(f_\ad \circ\rho_{1,*}(\gamma) =\rho_{2,*}(\gamma)\circ f_\ad\). Since \(\sans{N}\) is simply connected, we can integrate \(f_\ad\) to \(f\in \Aut(\sans{N})\) which satisfies \(f\circ\rho_1(\gamma)=\rho_2(\gamma)\circ f\). Then, recalling the identification \(\tilde M_i \simeq \tilde B\times \sans{N}\) given by the flat connections, the map \[ \tilde B\times \sans{N}\to  \tilde B\times \sans{N} ,\qquad (b,x)\mapsto (b,f(x)),\]
descends to a map \(f:M_1\to M_2\). The map \(f\) is isometric since \(f_*(\hca_1)=\hca_2\) and \(f_*(\vca_1)=\vca_2\), it covers an isometry, and \(f_{\ad}^*h_2=h_1\).
\end{proof}
\end{prop}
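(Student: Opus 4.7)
The plan is to treat the two directions in turn. For the forward direction, since $f$ is a fibre-preserving isometry, it must satisfy $df(\vca_1) = \vca_2$, and consequently $df(\hca_1) = \hca_2$ as horizontal distributions are the orthogonal complements of the vertical ones. This immediately gives (i): the induced diffeomorphism $\check f : B_1 \to B_2$ decomposes pointwise as $d\check f = d\pi_2 \circ df \circ (d\pi_1|_{\hca_1})^{-1}$, a composition of local Riemannian submersion isometries.

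The crucial step is the analysis of the fibrewise restrictions $f|_{M_{1,b}} : M_{1,b} \to M_{2,\check f(b)}$. Choosing trivialisations that identify these fibres with $\N$ endowed with left-invariant metrics, the restriction becomes an isometry between simply-connected nilpotent Lie groups with left-invariant metrics. By Wilson's theorem \cite{Wil82}, any such isometry is affine with respect to the canonical right-invariant connection $\nabla^{\rm R}$ on $\N$. Since the structure group $\Aut(\N)_\delta \ltimes \N_L$ is contained in $\Aff(\N, \nabla^{\rm R})$, the canonical vertical connections $\nabla^{\vca, b}$ introduced in \Cref{sec:can_vert} are intrinsically defined, and $f$ therefore preserves them. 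This makes $f$ an isomorphism of twisted principal bundles, and so it sends vertical invariant vector fields to their counterparts, inducing a Lie algebra bundle map $f_\ad : \ad M_1 \to \ad M_2$ covering $\check f$. The two identities in \eqref{eqn_BMP} now follow: $f_\ad^* h_2 = h_1$ is immediate from fibrewise isometry, while $f_\ad^* \nabla^{\ad, 2} = \nabla^{\ad, 1}$ follows from $df(\hca_1) = \hca_2$, because the flat connections on the adjoint bundles are determined by the horizontal distributions.

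For the converse, given $f_\ad$ and $\check f$ satisfying (i)--(ii), I would first pull back $M_2$ along $\check f$ to reduce to $\check f = \id_B$. Then $f_\ad$ becomes a section of the vector bundle $(\ad M_1)^* \otimes \ad M_2$ which, by the second identity in \eqref{eqn_BMP}, is parallel with respect to the induced flat connection. Lifting to the universal cover $\tilde B$, this parallel section corresponds to a single element $\phi \in \Aut(\ngo)$ which intertwines the infinitesimal monodromy representations $\rho_{1,*}$ and $\rho_{2,*}$. Since $\N$ is simply-connected and nilpotent, $\phi$ integrates uniquely to $\Phi \in \Aut(\N)$ satisfying $\Phi \circ \rho_1(\gamma) = \rho_2(\gamma) \circ \Phi$ for all $\gamma \in \pi_1(B)$. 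Using the identifications $\tilde M_i \simeq \tilde B \times \N$ provided by the flat connections, the map $(b,x) \mapsto (b, \Phi(x))$ is $\pi_1(B)$-equivariant and descends to the desired $f : M_1 \to M_2$. Being an isometry follows because $f$ maps each distribution $\hca_1, \vca_1$ isometrically onto its counterpart.

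The main obstacle is the careful application of Wilson's theorem in this parametrised setting: one must verify that the natural affine structure on the fibres is intrinsic (independent of the trivialisation chosen to apply Wilson), and ensure that the induced $f_\ad$ respects the Lie algebra structure on fibres. Both issues are resolved by the constraint that the structure group lies inside $\Aff(\N, \nabla^{\rm R})$, which preserves simultaneously the canonical right-invariant connection and the Lie bracket on $\ngo$. A secondary subtlety is the passage from the Lie algebra automorphism $\phi$ to a group automorphism $\Phi$, which uses simple-connectedness of $\N$ in an essential way.
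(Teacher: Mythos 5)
Your proof is correct and follows essentially the same route as the paper's: preservation of $\vca$ and $\hca$, the chain-rule computation showing $\check f$ is an isometry, Wilson's theorem applied fibrewise to get affineness with respect to the canonical vertical connections, and for the converse the reduction to $\check f = \id$, the identification of the parallel section $f_\ad$ with an intertwining element of $\Aut(\ngo)$, and integration to $\Aut(\N)$ via simple-connectedness. Your closing remarks correctly identify the key technical point (that the affine structure on the fibres is intrinsic because the structure group lies in $\Aff(\N,\nabla^{\rm R})$), which the paper handles via the setup in its Section 2.1 on canonical vertical connections.
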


\subsection{Ricci curvature of invariant metrics}

In this section, we record (without proofs) the formulas for the Ricci curvature of an invariant metric on a twisted principal bundle, which follow from O'Neill's formulas for the curvature of a Riemannian submersion. The presentation is based on \cite{alek_sol} (see also  \cite{Besse87}*{Chpt.~9} or \cite{NT18}{Prop.\ 8.1},), but contains some additional remarks that are new to this paper.

Throughout this section, \(U\) will denote a local section of $\ad M$, $\hat U $ the corresponding invariant vertical vector field on $M$. Since the fibres of $\ad M$ have a natural Lie algebra structure, $\ad U := [U,\cdot] \in \Der(\ad M)$ will denote, as usual, the (fibrewise) adjoint representation.
We will also write \(X, Y\) for vector fields on $B$, and $\bar X, \bar Y$ for their corresponding horizontal lifts to $M$. 

An invariant metric $g$ on $M$ is determined by a triple $(g_B, \hca, h)$ as per \Cref{prop_inv_metrics}, and $\pi : M \to B$ is a Riemannian submersion. The curvature of $\nabla^\ad$  
is O'Neill's $A$-tensor, which we view as an $\ad M$-valued $2$-form on $B$: for $X,Y \in TB$,  $A_{X,Y} \in \Gamma(\ad M)$ corresponds to the invariant vertical field $\tfrac12 \vca [\bar X, \bar Y]$. Naturally,  $A=0$ if and only if $\hca$ is an integrable distribution. For each $U \in \Gamma (\ad M)$, we also set $AU$ to be the ($\R$-valued) $2$-form on $B$ given by $(X,Y) \mapsto h(A_{X,Y} , U)$. 

For each $X\in TB$, we denote by $L_X$ the shape operator of the fibres in $M$, viewed as a $h$-self-adjoint endomorphism of $\ad M$. It is implicitly defined by
\begin{equation}\label{eqn:shape}
    h(L_X U, V) = g(\nabla_{\bar X} \hat U, \hat V).
\end{equation}
The fibres' mean curvature vector $\bar \mcv$ is the basic horizontal vector field given by  the trace of the second fundamental form after raising an index. The corresponding vector field $\mcv$ on $B$ satisfies $g_B(\mcv, X) = - \tr L_X$, and  is thus given in terms of any orthonormal frame  $\{X_j\}$ for $B$ by
\[
    \mcv = - \sum_{j=1}^{\dim B} (\tr L_{X_j}) \cdot X_j.
\]

The fibre's Ricci curvature is an invariant section of $\Sym^2(\vca^*)$, and it thus corresponds to a section $\Ric_h$ of $\Sym^2(\ad M)^*$. 

Following \cite{Cor88}, we also make the following definition:
\begin{definition}\label{def_tensionfield}
Let \(\nabla^{\rm ad}\), $h$ be a connection and bundle metric on \(\ad M\), and let \(g_B\) be a metric on \(B\). The \emph{tension field} of \(h\) is the section  \( \tau(h) \in \Gamma \sym^2(\ad M)^*\) defined by 
\begin{equation}\label{eqn:deftau}
\tau(h) = -2h(\nabla^{+,*}\theta\cdot,\cdot),
\end{equation}
where \(\nabla^{\rm ad}=\nabla^+ +\theta\) is the (unique) decomposition of \(\nabla^{\rm ad}\) into a  connection \(\nabla^+\) preserving $h$, and a \(h\)-self adjoint \(\End(\ad M)\)-valued one-form \(\theta\) (cf.~\Cref{Lemma:Tensionfield}). 
\end{definition}
In Definition~\ref{def_tensionfield}, we use the Levi-Civita connection to extend the covariant derivative \(\nabla^+\) to act on \(\End(\ad M)\)-valued tensors. Moreover, $\nabla^{+,*}$ denotes the formal adjoint of \(\nabla^+\) with respect to the \(L^2\)-inner product on \(\End(\ad M)\)-valued tensors induced by \(h\) and \(g_B\). On \(\End(\ad M)\)-valued one-forms, it is given by
\begin{equation}\label{eqn:nabla+*theta}
\nabla^{+,*}\theta = -\sum_{i=1}^{\dim B} (\nabla^+_{\bar{X}_i}\theta)_{\bar{X}_i} . 
\end{equation}

The Ricci curvature of \((M,g)\) can then be expressed in terms of the Ricci curvature of \((B,g_B)\), the Ricci curvature of the fibres (with the induced metric), and the tensors \(A\) and \(T\); see \cite{Besse87} or \cite{alek_sol}*{Theorem~2.2}.

\begin{prop}\label{Prop:SubmersionRicci}
If $g$ is an invariant metric on a twisted principal $\N$-bundle  \(\pi:M^n\to B^d\), with $\N$ unimodular, then,
\begin{subequations}\label{Eq:SubmersionCurvature}
\begin{align}
\label{Eq:SubmersionCurvature(V)}\Ric_g(\hat U, \hat U)&= \Ric_h ({U},{U}) + | AU |^2-\tfrac{1}{2}\tau(h)({U},{U}) + h(L_{{\mathrm H}} U, U) ,\\
\label{Eq:generalSubmersionCurvature(B)}\Ric_g(\bar X, \bar X) &= \Ric_{g_B}({X}, X) - |L_X|^2 - 2 \,|A_X|^2 +g_B(\nabla_{ X}{{\mathrm H}}, X).
\end{align}
\end{subequations}
Furthermore, when $A=0$, we view $h : \tilde B \to \Sym^2_+(\ngo^*)$, and it holds that
\begin{subequations}\label{Eq:SubmersionCurvature_A=0}
\begin{align}
    \label{Eq:SubmersionCurvature(M)} \Ric_g(\hat U, \bar X) &= \tr \big( \ad {U} L_{{X}} \big), \\
    \label{Eq:SubmersionCurvature(B)} \Ric_g(\bar X, \bar X) &= \Ric_{g_B}({X}, X) - h^*g_{\mathsf{sym}}( \tilde X,  \tilde X)  +g_B(\nabla_{ X}{{\mathrm H}}, X).
\end{align}
\end{subequations}
Here $\tilde X$ denotes the lift of $X$ to $\tilde B$, and $g_{\rm sym}$ is the symmetric metric on $\Sym^2_+(\ngo^*)$ \eqref{eqn:g_sym}.
\begin{proof}
Locally, the geometry is that of a principal bundle with an invariant metric, thus \eqref{Eq:SubmersionCurvature} follows directly from \cite{alek_sol}*{Thm.~2.2} and \Cref{Lemma:Tensionfield} below.  Regarding \eqref{Eq:SubmersionCurvature_A=0}, \eqref{Eq:SubmersionCurvature(M)} is precisely \cite{alek_sol}*{Prop.~3.6} (since $\N$ is unimodular). Finally, when $\hca$ is flat we may view $h : \tilde B \to \Sym^2_+(\ngo^*)$ thanks to \Cref{prop_inv_metrics}, and by \cite{alek_sol}*{Lemmas 4.1 $\&$ 5.4} we have 
\[|L_X|^2= \tr L_X^2 =g_{\rm sym}(dh  \tilde X, dh\tilde X)=h^*g_{\rm sym}(\tilde X, \tilde X).\] 
\end{proof}
\end{prop}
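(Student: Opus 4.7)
The plan is to reduce everything to the principal bundle setting and then invoke O'Neill's submersion formulas together with the known Ricci formulas for invariant metrics on principal bundles. The key observation is that invariance, the $A$-tensor, the shape operator $L_X$, the tension field $\tau(h)$, and the fibre Ricci $\Ric_h$ are all defined via local invariant data on $M_{U_i} = \pi^{-1}(U_i)$. In each such trivialisation, $\pi|_{M_{U_i}} \to U_i$ is an honest principal $\N$-bundle with an $\N$-invariant metric, so all pointwise formulas proven in that setting transfer verbatim to the twisted case.

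First, in a local trivialisation we would apply the Ricci formulas for invariant metrics on principal bundles as in \cite{alek_sol}*{Thm.~2.2} (alternatively \cite{Besse87}, Chpt.~9) to obtain \eqref{Eq:SubmersionCurvature(V)} and \eqref{Eq:generalSubmersionCurvature(B)}, where the fibre contribution takes the form $\Ric_h(U,U) - \tfrac12 \tau(h)(U,U)$. Here one has to match the tension-field term $\tau(h)$ arising in \cite{alek_sol} with the quantity defined in \Cref{def_tensionfield}; this is exactly the content of \Cref{Lemma:Tensionfield}, which identifies the $h$-self-adjoint part of $\nabla^{\rm ad}$ with $\theta$ and rewrites the resulting expression as $-2h(\nabla^{+,*}\theta\cdot,\cdot)$. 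Since the expressions are pointwise tensorial, gluing over a bundle atlas and using invariance gives the formulas globally on $M$.

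Next, for the statements under the additional hypothesis $A=0$, I would argue as follows. Integrability of $\hca$ means that the connection $\nabla^{\rm ad}$ on $\ad M$ is flat, so by \Cref{prop_inv_metrics}(iii)' the bundle metric $h$ is encoded by a $\rho$-equivariant map $h: \tilde B \to \Sym^2_+(\ngo^*)$. Formula \eqref{Eq:SubmersionCurvature(M)} is then a direct consequence of \cite{alek_sol}*{Prop.~3.6}, where the unimodularity of $\N$ is used precisely to kill the term involving $\tr \ad U$ that would otherwise appear. For \eqref{Eq:SubmersionCurvature(B)}, the task is to rewrite $|L_X|^2 = \tr L_X^2$ as $h^* g_{\mathsf{sym}}(\tilde X, \tilde X)$. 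This identification is exactly the content of \cite{alek_sol}*{Lem.~4.1 $\&$ 5.4}: unwinding definitions, $L_X$ corresponds via \eqref{eqn:shape} to the derivative of $h$ in direction $\tilde X$ under the identification $\Sym^2_+(\ngo^*)\simeq \GL(\ngo)/\Or(\ngo)$, and the trace-squared norm matches the symmetric-space metric $g_{\mathsf{sym}}$ by construction.

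The main obstacle I would anticipate is the careful bookkeeping relating the several different pieces of data attached to $\nabla^{\rm ad}$: its torsion, curvature $A$, the $h$-compatible part $\nabla^+$, and the self-adjoint part $\theta$. In particular, the verification that the fibre-wise Ricci formula written in \cite{alek_sol} really equals $\Ric_h(U,U) - \tfrac12\tau(h)(U,U) + h(L_{\mcv}U,U) + |AU|^2$ with $\tau(h)$ in the sense of \Cref{def_tensionfield} is the only nontrivial algebraic step; everything else is either standard Riemannian submersion machinery, a direct quotation, or a consequence of flatness and unimodularity. Once \Cref{Lemma:Tensionfield} is in hand, the proof reduces to the local-to-global observation plus the three citations above.
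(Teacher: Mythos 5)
Your proposal is correct and follows the paper's proof essentially verbatim: reduce to the local principal-bundle picture, quote \cite{alek_sol}*{Thm.~2.2} and \Cref{Lemma:Tensionfield} for \eqref{Eq:SubmersionCurvature}, quote \cite{alek_sol}*{Prop.~3.6} (using unimodularity) for \eqref{Eq:SubmersionCurvature(M)}, and quote \cite{alek_sol}*{Lemmas~4.1~\&~5.4} together with \Cref{prop_inv_metrics} for \eqref{Eq:SubmersionCurvature(B)}. The only difference is that you spell out the local-to-global gluing and the role of the $\nabla^+ + \theta$ decomposition more explicitly, which the paper leaves implicit.
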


The Levi-Civita connection \(\nabla^g\) of \(g\) defines a connection \(\nabla^{\rm LC}\) on \(\ad M\) in the following way: \(\nabla^{\rm LC}_X U\) is the section of \(\ad M\) that corresponds to the invariant vertical vector field \({\mathcal V}\nabla^g_{\bar{X}}\check{U}\). Notice that \(\nabla^gg=0\) implies \(\nabla^{\rm LC}h=0\). Recall also that we view \(L\) as an \(\End(\ad M)\)-valued one-form on \(B\). The following is a generalisation of \cite{Lot07}*{Proposition~4.17}.

\begin{lemma}\label{Lemma:Tensionfield}
The decomposition of $\nabla^\ad$ into a connection preserving $h$ and a \(h\)-self-adjoint \(\End(\ad M)\)-valued one-form is
\begin{equation}\label{eqn_adDecomp}
    \nabla^{\rm ad} = \nabla^{\rm LC} - L,
\end{equation}
where $\nabla^{\rm LC}$ is induced by the Levi-Civita connection on $M$. 
Consequently, the tension field of \(h\) is given by 
\begin{equation}\label{eqn:tau=2divL}
\tau(h)(U,U) = 2\sum_{i=1}^n \m{(\nabla^g_{\bar{X}_i}L)_{\bar{X}_i}\hat{U}}{\hat{U}}.
\end{equation} 
If \(\nabla^{\rm ad}\) is flat, then \(\tau(h)\) is the tension field (in the sense of harmonic maps) of the \(\rho\)-equivariant map \(h:(\tilde{B},\tilde{g}_B)\to (\sym_+^2(\mathfrak{n}^*),g_{\sans{sym}})\).
\begin{proof}
Recall \cite{NT18}*{Lem.\ 8.1} that \(\nabla^{\rm ad}\) is determined by the relation 
\[
    \big( \nabla^\ad_X U \big)^\wedge = [\bar{X},\hat{U}].
\]
Using that \(\nabla^g\) is torsion free, and that the bracket of vertical and basic vector fields is is vertical, we have 
\[
    [\bar{X},\hat{U}] = \mathcal{V}\nabla^g_{\bar{X}}\hat{U} - \mathcal{V}\nabla^g_{\hat{U}} \bar{X}. 
\]
By definition, the section of \(\ad M\) corresponding to the first term on the right-hand-side is \(\nabla^{\rm LC}_X U\). Moreover, by \eqref{eqn:shape}, the remaining term corresponds precisely to $-L_X U$. Equation \eqref{eqn:tau=2divL} now follows from \eqref{eqn:deftau},  \eqref{eqn:nabla+*theta}. 
\end{proof}
\end{lemma}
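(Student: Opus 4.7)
The plan is to split the lemma into its three assertions and address them in sequence, following the author's approach for the first two and providing more detail for the third.

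For the decomposition $\nabla^{\rm ad} = \nabla^{\rm LC} - L$, I would begin with the characterisation $(\nabla^{\rm ad}_X U)^\wedge = [\bar X, \hat U]$ recalled from \cite{NT18}. The bracket of a basic vector field with an invariant vertical field is automatically vertical, so torsion-freeness of $\nabla^g$ lets me rewrite it as $\mathcal{V}\nabla^g_{\bar X}\hat U - \mathcal{V}\nabla^g_{\hat U}\bar X$; the first term is by definition $(\nabla^{\rm LC}_X U)^\wedge$, and the second equals $(-L_X U)^\wedge$ by \eqref{eqn:shape}. To identify this as the decomposition in Definition~\ref{def_tensionfield}, I verify (i) that $\nabla^{\rm LC}$ preserves $h$, which follows from $\nabla^g g = 0$ applied to invariant vertical fields using $h(U,V) = g(\hat U, \hat V)$, and (ii) that $L$ takes fibrewise $h$-self-adjoint values, which is the standard self-adjointness of the shape operator for a Riemannian submersion. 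Uniqueness of the splitting $\nabla^+ + \theta$ then forces $\nabla^+ = \nabla^{\rm LC}$ and $\theta = -L$.

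For formula \eqref{eqn:tau=2divL}, substituting $\theta = -L$ and $\nabla^+ = \nabla^{\rm LC}$ into \eqref{eqn:deftau} and \eqref{eqn:nabla+*theta} yields directly $\tau(h)(U,U) = 2 \sum_i h\bigl((\nabla^{\rm LC}_{X_i} L)_{X_i} U, U\bigr)$. Translating this to the bundle $M$ using the correspondence between $\nabla^{\rm LC}$ on $\ad M$ and the vertical part of $\nabla^g$ on invariant vertical fields, together with $h(U,V) = g(\hat U, \hat V)$, produces the stated expression. This step is essentially bookkeeping.

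For the harmonic-map interpretation when $\nabla^{\rm ad}$ is flat, I would use Proposition~\ref{prop_metricsadM} to view $h$ as a $\rho$-equivariant map $\tilde h : \tilde B \to \sym_+^2(\mathfrak{n}^*) = \G/\K$. The plan is to identify the data of Definition~\ref{def_tensionfield} with the harmonic-map tension field of $\tilde h$: the tangent space $T_{h_0}(\G/\K)$ is canonically isomorphic to the space of $h_0$-self-adjoint endomorphisms of $\mathfrak{n}$, and under this identification $d\tilde h(X)$ should correspond to a constant multiple of $L_X$. This should follow from differentiating the defining relation $\tilde h(b)(U,V) = g(\hat U, \hat V)|_b$ along the horizontal lift $\bar X$ and comparing with \eqref{eqn:shape}. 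The pullback of the symmetric-space Levi-Civita connection along $\tilde h$ similarly should match $\nabla^+ = \nabla^{\rm LC}$ under the bundle isomorphism. Granted these identifications, the harmonic-map tension field $\tr_{\tilde g_B} \nabla d\tilde h$ coincides with $\tau(h)$ up to a constant, yielding the assertion.

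The main obstacle will be the third step: verifying that the algebraic splitting of $\nabla^{\rm ad}$ from Definition~\ref{def_tensionfield} corresponds precisely to the geometric data $(d\tilde h, \tilde h^* \nabla^{g_{\sans{sym}}})$ on the symmetric-space side. This translation is conceptually routine but delicate, because canonical identifications of tangent spaces and the Levi-Civita connection on $\sym_+^2(\mathfrak{n}^*)$ depend on basepoint and normalisation conventions; carrying it out amounts to a Koszul-formula computation together with a careful tracking of the $\mathfrak{k}\oplus\mathfrak{p}$ decomposition at the Lie-algebra level, with particular attention to the factors of $2$ and signs introduced by the convention for $g_{\sans{sym}}$.
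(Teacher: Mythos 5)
Your proof follows essentially the same route as the paper's: the same torsion-freeness decomposition $[\bar X,\hat U]=\mathcal{V}\nabla^g_{\bar X}\hat U - \mathcal{V}\nabla^g_{\hat U}\bar X$ to identify $\nabla^{\rm ad}=\nabla^{\rm LC}-L$, the same substitution into \eqref{eqn:deftau} and \eqref{eqn:nabla+*theta} for \eqref{eqn:tau=2divL}, and your explicit check that $\nabla^{\rm LC}h=0$ and that $L$ is $h$-self-adjoint merely spells out what the paper states just before the lemma. On the third assertion (the harmonic-map interpretation when $\nabla^{\rm ad}$ is flat) the paper's proof is in fact silent and leans on the identifications already established in \cite{alek_sol}*{Lemmas~4.1~\&~5.4} and quoted in \Cref{Prop:SubmersionRicci} (in particular $L_X$ corresponding to $\tfrac12 h^{-1}dh(\tilde X)$), so your flagged ``main obstacle'' is precisely the content that the paper outsources rather than a gap peculiar to your argument.
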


\section{The twisted harmonic-Einstein equations}\label{Sec:Ansatz}

As mentioned before, our original motivation is to produce new examples of invariant expanding Ricci solitons on twisted principal $\N$-bundles, where $\N$ is a simply-connected nilpotent Lie group. In this section we introduce an Ansatz for that purpose, which reduces the soliton equation \eqref{eqn_eRS} to the so-called twisted harmonic-Einstein equations \eqref{eqn_twistharmEin}.  In subsequent subsections we  discuss the solutions to \eqref{eqn_eRS} in particular cases.  Further, in \Cref{Sec:ExpandingSolitons}, motivated by the asymptotic limits of collapsing Ricci flows \cite{LP25}, we will introduce the notion of affine expanding soliton, and show that they all satisfy our Ansatz. 

Let $\pi:M\to B$ be a twisted principal \(\sans{N}\)--bundle with $\N$ nilpotent and simply-connected, with twisting homomorphism \(\rho\in \Hom(\pi_1(B),\Aut(\sans{N}))\).  Fix a Haar measure $\nu$ on $\N$, let $|\nu|$ be the corresponding density, and denote by $\Aut_{\pm 1}(\N)$ the subgroup of automorphisms of $\N$ preserving $|\nu|$. Recall that invariant metrics on $M$ are determined by the data $(g_B, \hca, h)$, where $g_B$ is a metric on $B$, $\hca$ is a connection on $M$, and $h$ is a bundle metric on $\ad M$ (\Cref{prop_inv_metrics}). We call $\nabla^\ad$ the affine connection on $\ad M$ determined by $\hca$.

\begin{ansatz}\label{Ansatz} For a pair $(g,X)$ of an invariant metric $g$ corresponding to $(g_B, \hca, h)$ and a (not necessarily invariant) vector field $X$ on $M$,  we assume:
\begin{enumerate}[label=(\roman*)]
    \item $\hca$ is flat and its monodromy is $\rho : \pi_1(B) \to \Aut_{\pm 1}(\N)$; \label{item:A_flat}
    \item The fibres of $\pi : M \to B$ are minimal submanifolds and Ricci solitons with soliton constant $-1/2$, and  $\nabla^\ad (\RicE{h}) = 0$; \label{item:A_soliton}
    \item  $X$ is vertical and it restricts to a soliton vector field of the fibres. \label{item:A_X}
\end{enumerate}
\end{ansatz}

Some comments are in order. Firstly, thanks to \Cref{prop_structure_rho}, flatness in  \ref{item:A_flat} yields an isomorphism $M\simeq \tilde B \times_\rho \N$,
and by \Cref{prop_inv_metrics} the bundle metric becomes a $\rho$-equivariant map 
\[
    h : \tilde B \to \Sym^2_+(\ngo^*).
\]
The monodromy assumption has two parts: on one hand, there is no translational part (cf.~\eqref{eqn:rhohca}). On the other hand, the unimodularity restriction on $\rho$ further ensures the existence of a globally-defined, $\nabla^\ad$-parallel density $|\nu_\ad|$ on the fibres of $\ad M$.

Next, when lifted to $\tilde M \simeq \tilde B \times \N$, a metric $g$ satisfying \Cref{Ansatz} is of the form 
\[
	\tilde g = \tilde g_B + h,
\]
and by \ref{item:A_soliton}, the fibres $(N, h_{\tilde b})$ are all nilsolitons. The significance of the condition $\nabla^\ad (\RicE{h}) = 0$ is that the Ricci endomorphism of the fibres is independent of the $\tilde B$-coordinate. 
See \Cref{app_nil} for further details about nilsolitons.

Finally, in analogy with the homogeneous case (see \cite{solvsolitons}*{$\S$2} and the proof of \Cref{prop_nilsoliton}), the vector field $X$ can be obtained from the nilsoliton derivation field $D$:

\begin{lemma}\label{Lemma:SolVec}
Let $D\in \Gamma \Der(\ad M)$ be a the derivation field corresponding to an invariant endomorphism $D_\vca \in \End(\vca)^{\rm inv}$, and let $g$ be an invariant metric on $M$ defining a flat connection $\hca$ whose monodromy has no translational part.
Then, there exists a vertical vector field $X_D \in \Gamma(TM)$ so that 
\[
 \mathcal{L}_{X_D} g =  g(D_M \cdot, \cdot) + g(\cdot, D_M \cdot),
\]
where $D_M \in \End(TM)^{\rm inv}$ is given according to $TM = \vca \oplus \hca$ by 
\[
    D_M := \begin{pmatrix}
        D_\vca & 0 \\
        0 & 0
    \end{pmatrix}.
\]
\begin{proof}
The idea is to use local trivialisations, and argue as in \cite{solvsolitons}*{$\S$2} fibrewise. The only thing to be checked is that the locally-defined objects behave well with respect to the transition maps, so that they glue to define a global object. 

More precisely, consider a bundle atlas $\{ U_i \times \N \}_{i\in I}$ for $M$ adapted to $\hca$, with locally constant transition functions $\tau_{ij} : U_i \cap U_j \to \rho(\pi_1(B)) \subset \Aut(\N)$. Since $\N$ is simply-connected, there is a canonical Lie group isomorphism $\Aut(\N) \simeq \Aut(\ngo)$, $\varphi \mapsto \underline{\varphi}$, and this defines a bundle atlas $\{U_i\times \ngo \}_{i\in I}$ for $\ad M$, with transition functions $\underline{\tau}_{ij}$. In each trivialisation $U_i\times \ngo$, the derivation field $D$ becomes $D_i : U_i \to \Der(\ngo)$. For each $t$ this gives $\exp(t D_i) : U_i \to \Aut(\ngo)$, and under a change of trivialisation these change via
\[
    \exp(t D_i) = \underline{\tau}_{ij}  \circ \exp(t D_j)  \circ \underline{\tau}_{ij}^{-1}.
\]
Let $\phi_t^i \in \Aut(\N)$ be defined by $\underline{\phi}_t^i = \exp(t D_i)$. Clearly, these transform via
\[
    \phi_t^i = \tau_{ij} \circ \phi_t^j \circ \tau_{ij}^{-1},
\]
and thus, for each $t\in \R$, the locally-defined maps 
\[
    U_i \times \N \to U_i\times \N, \qquad (u,n) \mapsto (u,\phi_t^i(u))
\]
glue up to define a diffeomorphism $\phi_t$ of M. This gives rise to a one-parameter group $(\phi_t)_{t\in \R}$ of diffeomorphisms of $M$ that are also twisted principal bundle isomorphisms covering the identity (see \Cref{sec:can_vert}). They generate a vector field $X_D\in \Gamma(TM)$. Computing fibrewise in a local trivialisation as in \cite{solvsolitons}*{p.~3} we deduce the claimed formula for the Lie derivative (we notice that there is a sign error in the formula from \cite{solvsolitons}, which does not affect the results of that paper whatsoever). 
\end{proof}
\end{lemma}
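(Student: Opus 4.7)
The plan is to construct $X_D$ as the infinitesimal generator of a $1$-parameter group $(\phi_t)_{t \in \R}$ of twisted principal bundle automorphisms of $M$ that cover the identity on $B$, built fibrewise from $D$ in exactly the manner of the homogeneous nilsoliton construction of \cite{solvsolitons}. The essential content beyond the homogeneous case is a globalisation step: one must check that the fibrewise diffeomorphisms assemble coherently across overlaps of a bundle atlas, which is precisely where the flatness of $\hca$ with monodromy $\rho$ and the fact that $D$ is a global section of $\Der(\ad M)$ come into play.

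First I would fix a bundle atlas $\{U_i\times \N\}_{i\in I}$ adapted to $\hca$, so that the transition functions $\tau_{ij}$ are locally constant with values in $\rho(\pi_1(B)) \subset \Aut(\N)$. In the induced atlas $\{U_i\times \ngo\}$ on $\ad M$, with transitions $\underline{\tau}_{ij}$, the global section $D$ reads as a smooth family $D_i:U_i\to \Der(\ngo)$ satisfying the cocycle condition
\[
    D_i = \underline{\tau}_{ij}\circ D_j\circ \underline{\tau}_{ij}^{-1}
\]
on overlaps. Exponentiation preserves this identity, producing $\exp(tD_i):U_i\to \Aut(\ngo)$ with $\exp(tD_i)=\underline{\tau}_{ij}\circ \exp(tD_j)\circ \underline{\tau}_{ij}^{-1}$. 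Since $\N$ is simply connected, the differential at the identity identifies $\Aut(\N)$ with $\Aut(\ngo)$, so each $\exp(tD_i)$ lifts uniquely to a smooth $\phi_t^i:U_i\to \Aut(\N)$ satisfying $\phi_t^i=\tau_{ij}\circ \phi_t^j\circ \tau_{ij}^{-1}$.

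This is exactly the compatibility required for the local diffeomorphisms $\Phi_t^i:(u,n)\mapsto (u,\phi_t^i(u)(n))$ of $U_i\times \N$ to patch together into a global $1$-parameter family $(\phi_t)_{t\in \R} \subset \Diff(M)$ of twisted principal bundle isomorphisms covering the identity on $B$. Setting $X_D := \left.\tfrac{d}{dt}\right|_{t=0}\phi_t$ produces a vertical vector field by construction.

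For the Lie derivative identity, I would work in a single trivialisation $U_i\times \N$, where $g$ restricts to $\tilde g_B + h$ with $h$ a smooth family of left-invariant metrics on the fibres, and $\phi_t$ acts as $(u,n)\mapsto (u,\phi_t^i(u)(n))$. Because $\phi_t$ covers the identity on $B$ and preserves both $\hca$ and $\vca$, the horizontal--horizontal and mixed components of $\mathcal{L}_{X_D}g$ vanish; since $D_M$ is zero on $\hca$, the claimed identity holds trivially on those components. On the vertical part the calculation collapses to the pointwise homogeneous computation of \cite{solvsolitons}*{$\S$2}, yielding the symmetrisation of $g(D_\vca\cdot,\cdot)$. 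I expect the main obstacle to be the gluing step: verifying that the conjugation identity is faithfully transported through $D \leadsto \exp(tD) \leadsto \phi_t$ at each stage. Once the global flow exists, the Lie derivative identity is a pointwise fibrewise assertion that reduces immediately to the homogeneous case.
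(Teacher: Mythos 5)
Your proof takes essentially the same approach as the paper: construct the global one-parameter group $(\phi_t)$ by exponentiating the derivation field in each chart of a flat atlas, verify the cocycle compatibility across overlaps to glue, and reduce the Lie-derivative computation to the fibrewise homogeneous calculation of \cite{solvsolitons}. Your additional remark that $\phi_t$ preserves $\hca$ (hence kills the mixed components) is implicitly using that the local representatives $D_i$ are constant in the flat atlas, i.e.\ that $D$ is $\nabla^\ad$-parallel --- which holds for the nilsoliton derivation field arising in the application, and is the same implicit hypothesis hidden in the paper's terse ``compute fibrewise'' step.
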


\begin{remark}
When $\N \simeq \R^d$ is abelian, conditions \ref{item:A_soliton} and \ref{item:A_X} in \Cref{Ansatz} reduce to the minimality of the orbits and the fact that, on a trivialisation $U\times \R^d$ adapted to the flat connection $\hca$,  the soliton vector field is half the Euler vector field $x^i \tfrac{\partial}{ \partial x^i}$ on $\R^d$ . This is precisely the Ansatz used by Lott in \cite{Lot07}*{$\S$4}.
\end{remark}

The next result shows that, under Ansatz \ref{Ansatz}, the expanding Ricci soliton equation \eqref{eqn_eRS}  reduces to the twisted harmonic-Einstein equations \eqref{eqn_twistharmEin}. 

\begin{prop}\label{prop_AnsatthenHE}
Let $(M,g,X)$ be an invariant expanding Ricci soliton, with $g$ determined by the data $(g_B,\hca, h)$, and assume that \Cref{Ansatz} is satisfied. Then, $(g_B, \rho, h)$ solves the twisted harmonic-Einstein equations 
\begin{subequations}\label{Eq:HarmonicEinsteinEquations}
\begin{align}
\label{Eq:HarmonicEinsteinEquations(V)} \tau_{\tilde g_B}(h) &= 0 \, , \\
\label{Eq:HarmonicEinsteinEquations(B)} \Ric(\tilde g_B) &= -\tfrac12 \tilde g_B+h^*g_{\mathsf{sym}},
\end{align}
\end{subequations}
for the pair $(B, \G/\K)$, where $\rho : \pi_1(B) \to \G$, $h : \tilde B \to \G/\K$, and the groups $\G$ and $\K$ are given by
\begin{equation}\label{eqn:G,K}
    \G = \{ \varphi \in \Aut(\ngo) : \varphi \beta \varphi^{-1} = \beta, \det \varphi = \pm 1 \}, \qquad \K = \G \cap \mathsf{O}(\ngo, \bar h).
\end{equation}
Here, $\beta  :=  2\, \RicE{\bar h}  \in \End(\ngo)$ is twice the Ricci endomorphism of a fixed nilsoliton metric $\bar h$ on $\N$ with cosmological constant $-\tfrac12$, and we use the canonical isomorphism $\Aut(\N) \simeq \Aut(\ngo)$.

Conversely, given a solution $(g_B, \rho, h)$ to \eqref{Eq:HarmonicEinsteinEquations}, there exists a $\rho$-twisted principal $\N$-bundle $M$ with flat connection $\hca$  such that the invariant metric $g$ determined by $(g_B, \hca, h)$ solves the expanding Ricci soliton equation \eqref{eqn_eRS} for some vector field $X$ on $M$, and the pair $(g,X)$ satisfies \Cref{Ansatz}.
\begin{proof}
Assume $(M,g,X)$ is a soliton satisfying \Cref{Ansatz}. Then, $A=0$ and $\mcv =0$ in \Cref{Prop:SubmersionRicci}.
Thus, from  \eqref{Eq:SubmersionCurvature(V)} we get $\tau_{\tilde g_B} (h) = 0$, since the Ricci curvature terms cancel out because the fibers are Ricci solitons with the same constant and the same vector field as $(M,g, X)$.
On the other hand, \eqref{Eq:SubmersionCurvature(B)} and
the soliton equation \eqref{eqn_eRS}   give 
\[
 \Ric_{\tilde g_B} =     -\tfrac12 \tilde g_B  -\tfrac12 \,  (\mathcal{L}_X g) |_{\mathcal H \times \mathcal{H}} +  h^* g_{\rm sym}.
\]
Since $X$ is vertical and $A=0$, we have $(\mathcal{L}_X g) |_{\mathcal H \times \mathcal{H}} = 0$,
thus \eqref{Eq:HarmonicEinsteinEquations(B)} follows.  By \ref{Ansatz}, \ref{item:A_soliton},  the image of $h$ consists of nilsoliton inner products on $\ngo$ with a fixed Ricci endomorphism $\beta$ and a fixed volume density (by minimality). \Cref{lem:solitons_connected} thus yields $h : \tilde B \to \G/\K$. 
Regarding $\rho_* : \pi_1(B) \to \Aut(\ngo)$, since it is the holonomy of $\nabla^\ad$, which makes the Ricci endomorphisms parallel,  its image must centralise $\beta$. Together with \ref{Ansatz}, \ref{item:A_flat}, it follows that $\rho_* : \pi_1(B) \to \G$.

\smallskip

Conversely, we must show that any solution $(g_B,\rho, h)$ to \eqref{Eq:HarmonicEinsteinEquations} gives rise to an invariant expanding Ricci soliton on $M$ satisfying \Cref{Ansatz}. We set $M := \tilde B \times_\rho \N$, which has a canonical flat connection $\hca$ induced from that of the universal cover $\tilde B \to B$, with monodromy $\rho$.
The data $(g_B, \hca, h)$ defines an invariant metric $g$ on $M$ satisfying \ref{item:A_flat}. 

We claim that also \ref{Ansatz},   \ref{item:A_soliton} holds. To see that, let $U_i \times \ngo$ be a trivialisation for $\ad M$ adapted to the flat connection $\nabla^\ad$.  The bundle metric becomes a map $h : U_i \to \G/\K$. The shape operators $L_X\in \mathfrak{g}$ are traceless because $\G \subset \SL(\ngo)$, thus $\mcv = 0$.    Also, since $\G \subset \Aut(\ngo)$, for each $b\in U_i$ there is an isomorphism of  metric Lie algebras $ \varphi(b) :(\ngo, \bar h) \to (\ngo, h_b)$, with $(\ngo, \bar h)$ a nilsoliton with constant $-\tfrac12$. The Ricci endomorphism is thus 
\[
    2 \RicE{h_b}= \varphi(b)^{-1} \beta \varphi(b) = \beta,
\]
independent of $b$. Invariantly, this means that it is $\nabla^\ad$-parallel.

We now prove  that $g$ satisfies \eqref{eqn_eRS} for a vector field $X$ satisfying \ref{item:A_X} in \Cref{Ansatz}. We first analyse \Cref{Eq:SubmersionCurvature(V)}. The fixed determinant condition in $\G$ yields $H = 0$, and together with flatness of $\nabla^\ad$ and \eqref{Eq:HarmonicEinsteinEquations(V)} we get that $\Ric|_{\mathcal{V} \times \mathcal{V}} = \Ric_h$ (under the correspondence from \Cref{prop_metricsadM}).   Since all inner products on the orbit $\G \cdot \bar h \subset \Sym^2_+(\ngo^*)$ define pairwise isometric, invariant Ricci soliton metrics on the fibres of $\pi : M \to B$, we have a derivation field 
\begin{equation}\label{eqn:D_from_Ric_h}
    D := - \RicE{h} - \tfrac12 h  \in \Gamma \Der(\ad M).
\end{equation}
\Cref{Lemma:SolVec} yields a vertical $X_D\in \Gamma(TM)$ such that 
\[
    \Ric_g|_{\mathcal{V} \times \mathcal{V}} = (-\tfrac12 g - \tfrac12 \mathcal{L}_{X_D} g)|_{\mathcal{V} \times \mathcal{V}}.
\]
Using \eqref{Eq:SubmersionCurvature(B)}, we see that the horizontal part of equation \eqref{eqn_eRS} is satisfied thanks to minimality of the orbits and \eqref{Eq:HarmonicEinsteinEquations(B)}.  Finally, we claim that the mixed terms in $\Ric_g$ vanish. To see that, given $b\in B$ we may identify the fibre $(\ad M)_b$  with a metric Lie algebra $(\ngo, h_0)$ whose Ricci endomorphism is $\tfrac12 \beta$. In view of \eqref{Eq:SubmersionCurvature(M)}, it suffices to show that every shape 
operator $L_X \in \End(\ngo)$ at $b$ satisfies the assumptions of \Cref{lem:D_perp_adU}. Viewing $h$ as a section of the flat $\G/\K$-bundle $\tilde B \times_\rho \G/\K$, we have
\[
   h^{-1} d h (X) = 2\, L_X.
\]
Thus, $L_X$ is both a $h$-self-adjoint endomorphism of $\ad M$, and tangent to the fibres of the flat $\G/\K$-bundle over $B$. By definition of $\G$, this implies that $L_X = D + D^T$ where $D\in \Gamma \Der(\ad M)$ commutes with $\beta$, and the proof is concluded. 
\end{proof}
\end{prop}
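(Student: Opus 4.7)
The plan is to split the proof into the forward direction (from a soliton satisfying \Cref{Ansatz} to a solution of \eqref{Eq:HarmonicEinsteinEquations}) and the converse construction. For the forward direction, the main engine is \Cref{Prop:SubmersionRicci}: part \ref{item:A_flat} of \Cref{Ansatz} gives integrability of $\hca$ (so $A=0$), while the minimality in \ref{item:A_soliton} gives $\mcv=0$. Restricting the soliton equation \eqref{eqn_eRS} to $\vca\times\vca$ and using \eqref{Eq:SubmersionCurvature(V)}, the fibrewise soliton equation with constant $-\tfrac12$ and with the restriction of $X$ as soliton vector field (by \ref{item:A_soliton}--\ref{item:A_X}) cancels the $\Ric_h$ and Lie-derivative contributions, leaving exactly $\tau(h)=0$. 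For the horizontal restriction, since $X$ is vertical and $A=0$ we have $(\mathcal{L}_X g)|_{\hca\times\hca}=0$, so \eqref{Eq:SubmersionCurvature(B)} combined with \eqref{eqn_eRS} yields \eqref{Eq:HarmonicEinsteinEquations(B)} directly.

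Next I would identify the codomains of $h$ and $\rho$. By \ref{item:A_soliton} every fibre metric is a nilsoliton with a common $\nabla^\ad$-parallel Ricci endomorphism, and by \ref{item:A_flat} with a common volume density. The relevant orbit is then precisely $\G/\K$ as in \eqref{eqn:G,K}, via the basic facts on nilsolitons recalled in the appendix. The holonomy of $\nabla^\ad$ must preserve both $\beta$ and the parallel density, so $\rho$ takes values in $\G$.

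For the converse, set $M := \tilde B\times_\rho \N$ with its canonical flat connection $\hca$ of monodromy $\rho$, and let $g$ be the invariant metric determined by $(g_B,\hca,h)$; condition \ref{item:A_flat} is automatic. In a trivialisation of $\ad M$ adapted to $\nabla^\ad$, the $\G\subset\Aut(\ngo)$ action identifies each fibre with a nilsoliton isometric to $(\ngo,\bar h)$, so its Ricci endomorphism is $\nabla^\ad$-parallel and equal to $\beta/2$; the unimodularity constraint $\det=\pm 1$ in $\G$ forces $\mcv=0$, giving \ref{item:A_soliton}. To produce the soliton vector field, I would set $D := -\RicE{h} - \tfrac12 h \in \Gamma\Der(\ad M)$ and apply \Cref{Lemma:SolVec} to obtain a vertical $X_D$ furnishing the $\vca\times\vca$-component of \eqref{eqn_eRS}; the horizontal component follows from \eqref{Eq:SubmersionCurvature(B)} and \eqref{Eq:HarmonicEinsteinEquations(B)}.

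The main obstacle I foresee in both directions is the mixed Ricci term $\Ric_g(\hat U,\bar X)$, which by \eqref{Eq:SubmersionCurvature(M)} equals $\tr(\ad U\cdot L_X)$ and must vanish. The key identity is that, viewing $h$ as a section of the flat $\G/\K$-bundle $\tilde B\times_\rho \G/\K$, one has $h^{-1}dh(X) = 2L_X$; hence $L_X$ is both $h$-self-adjoint and tangent to $\G/\K$, so by the Cartan decomposition determined by $\K$ it splits as $D+D^T$ with $D\in\Der(\ngo)$ commuting with $\beta$. A trace identity in the spirit of the appendix lemma on derivations orthogonal to $\ad U$ then forces $\tr(\ad U\cdot L_X) = 0$. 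This is the delicate algebraic point that ties the definition of $\G$ in \eqref{eqn:G,K} to the geometry of the reduction, and once it is in place both implications close cleanly.
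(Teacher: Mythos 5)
Your proposal is correct and follows essentially the same route as the paper: $A=0$ and $\mcv=0$ from the Ansatz, the vertical/horizontal decomposition of the soliton equation via \Cref{Prop:SubmersionRicci}, the identification of the target $\G/\K$ via \Cref{lem:solitons_connected}, and for the converse the derivation field $D=-\RicE{h}-\tfrac12 h$ with \Cref{Lemma:SolVec}, closing with the mixed-term vanishing through $h^{-1}dh(X)=2L_X$ and \Cref{lem:D_perp_adU}. One small misattribution: the \emph{constancy} of the fibres' volume density is a consequence of minimality ($\mcv=-\nabla\log\sqrt{\det h}=0$, part of Ansatz~\ref{item:A_soliton}), not of the flatness/monodromy condition \ref{item:A_flat}, which only ensures the parallel density is globally well-defined.
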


\begin{definition}\label{def:tHE_equivalence}
Two solutions \((\check{g}_1,\rho_1,h_1), (\check{g}_2,\rho_2,h_2)\) of the twisted harmonic-Einstein equations for the pair \((B,\sans{G}/\sans{K})\) are called \emph{equivalent} if there exists \(\check{f}\in \Diff(B)\) and \(\varphi \in \sans{G}\) such that \begin{equation}\label{eqn_tHEequiv}
\check{f}^*\check{g}_2 = \check{g}_1,\qquad  C_\varphi  \circ \rho_1 = \rho_2\circ \check{f}_*,\qquad  \varphi \cdot  h_1=h_2 \circ \check f,
\end{equation} where \(\check{f}_*:\pi_1(B)\to \pi_1(B)\) is the induced action on the fundamental group and \( C_\varphi(q) = \varphi q \varphi^{-1} \) for $q\in \G$. 
\end{definition}

The significance of this equivalence notion relies on the following 

\begin{prop}\label{prop:equiv_iff_isom}
Let \((M_1,g_1)\) (resp.\ \((M_2,g_2)\)) be a Ricci soliton obtained from a solution \((\check{g}_1,\rho_1,h_1)\) (resp.\ \((\check{g}_2,\rho_2,h_2)\)) of the twisted harmonic-Einstein equation as in \Cref{prop_AnsatthenHE}. Then, there exists an isometric bundle map between \((M_1,g_1)\) and \((M_2,g_2)\) if and only if \((\check{g}_1,\rho_1,h_1)\) is equivalent to \((\check{g}_2,\rho_2,h_2)\).
\begin{proof}
By \Cref{prop:param_wilson}, an isometric bundle map between \((M_1,g_1)\) and \((M_2,g_2)\) is equivalently described by  an isometry \(\check{f}:(B_1,\check{g}_1)\to (B_2,\check{g}_2)\) and a Lie algebra bundle isomorphism \(f_\ad:\ad M_1\to \ad M_2\) such that \[h_1=f_\ad^*h_2, \qquad \nabla^{\ad,1}=f_\ad^*\nabla^{\ad,2}.\] 
After choosing trivialisations we obtain Lie algebra isomorphisms $(\ad M_1)_b \simeq \ngo \simeq (\ad M_2)_{\check f (b)}$ for some given point $b\in B$. Using parallel transport, the map $f_\ad$ is in turn completely determined by what it does at the fibre over $b$, and this action is simply by an automorphism $\varphi \in \Aut(\ngo)$.  The condition $f_\ad^* h_2 = h_1$ translates into $\varphi \cdot h_1 = h_2$. The fibre isomorphisms with $\ngo$ also yield explicit holonomy representations $\rho_1 : \pi_1(B,b) \to \Aut(\ngo)$ and $\rho_2 : \pi_1(B, \check f(b)) \to \Aut(\ngo)$, and it is easy to check that $\varphi$ must satisfy the compatibility condition $\varphi \rho_1([\gamma]) \varphi^{-1} = \rho_2([\check f \circ \gamma])$ for all $[\gamma] \in \pi_1(B,b)$.

Suppose now that two triples are equivalent as per \ref{eqn_tHEequiv}. Then, the above observations  imply that the corresponding solitons are isometric via a bundle map.

Conversely, suppose that there is an isometric bundle map determined by $\check f$ and $\varphi$ as above. Since $f_\ad$ is fibrewise an isometry of metric Lie algebras,  it intertwines the fibres' Ricci endomorphisms, thus $\varphi$ commutes with $\beta$. It also preserves the parallel density $\nu_\ad$, thus $\det \varphi= \pm 1$. 
\end{proof}
\end{prop}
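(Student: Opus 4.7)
The plan is to invoke \Cref{prop:param_wilson} (the parametrised Wilson theorem) to translate the existence of an isometric bundle map $f:(M_1,g_1) \to (M_2, g_2)$ into the existence of a pair $(\check f, f_\ad)$, where $\check f:(B,\check g_1) \to (B, \check g_2)$ is an isometry of the common base and $f_\ad: \ad M_1 \to \ad M_2$ is a Lie algebra bundle isomorphism covering $\check f$ satisfying $h_1 = f_\ad^* h_2$ and $\nabla^{\ad,1} = f_\ad^* \nabla^{\ad,2}$. With this reduction, both directions of the proposition become parallel statements relating this bundle data to the equivalence relation in \Cref{def:tHE_equivalence}.

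For the forward direction, I would fix a basepoint $b \in B$ together with trivialisations of the adjoint bundles near $b$ and $\check f(b)$ adapted to the respective flat connections, thereby identifying both fibres $(\ad M_i)_{\cdot}$ with $\ngo$. Since $f_\ad$ intertwines the flat connections, it is completely determined by its single value at $b$, which is an element $\varphi \in \Aut(\ngo)$. Propagating via parallel transport around loops based at $b$ yields the intertwining relation $\varphi \, \rho_1([\gamma]) \, \varphi^{-1} = \rho_2([\check f_* \gamma])$, that is, $C_\varphi \circ \rho_1 = \rho_2 \circ \check f_*$. Likewise, the condition $f_\ad^* h_2 = h_1$ translates, via the $\rho$-equivariant description of bundle metrics from \Cref{prop_metricsadM}, into $\varphi \cdot h_1 = h_2 \circ \check f$.

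The main obstacle is showing that $\varphi$ in fact lies in $\G$, rather than merely in $\Aut(\ngo)$. For this I would exploit the extra structure: $f_\ad$ is a fibrewise isomorphism of \emph{metric} Lie algebras, so it intertwines the nilsoliton Ricci endomorphisms of the fibres, which by \Cref{prop_AnsatthenHE} (and condition \ref{item:A_soliton} of \Cref{Ansatz}) are both conjugate to $\tfrac12 \beta$. It follows that $\varphi$ commutes with $\beta$. Moreover, $f_\ad$ preserves the canonical $\nabla^\ad$-parallel density on $\ad M$ guaranteed by \ref{item:A_flat} of \Cref{Ansatz}, whence $\det \varphi = \pm 1$. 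Together these give $\varphi \in \G$, so $(\check f, \varphi)$ realises the equivalence of triples in the sense of \eqref{eqn_tHEequiv}.

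For the converse, given an equivalence $(\check f, \varphi)$ as in \eqref{eqn_tHEequiv}, I would use the canonical isomorphism $\Aut(\N) \simeq \Aut(\ngo)$ (recall $\N$ is simply-connected) to lift $\varphi$ to a Lie group automorphism of $\N$. The intertwining relation $C_\varphi \circ \rho_1 = \rho_2 \circ \check f_*$ guarantees that the map $(b,n) \mapsto (\check f(b), \varphi(n))$ on $\tilde B \times \N$ descends to a well-defined bundle map $f : M_1 \to M_2$, and since $\varphi$ is constant the induced $f_\ad$ automatically intertwines the flat connections $\nabla^{\ad, i}$. The remaining hypothesis $h_1 = f_\ad^* h_2$ follows at once from $\varphi \cdot h_1 = h_2 \circ \check f$ combined with $\check f^* \check g_2 = \check g_1$, so the converse assertion of \Cref{prop:param_wilson} produces the desired isometric bundle map, completing the proof.
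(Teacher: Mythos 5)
Your proposal is correct and takes essentially the same approach as the paper: reduce via \Cref{prop:param_wilson} to the data $(\check f, f_\ad)$, determine $f_\ad$ by its value $\varphi\in\Aut(\ngo)$ at one fibre via parallel transport, match the conditions on $h$ and $\rho$ with \Cref{def:tHE_equivalence}, and establish $\varphi\in\G$ from the intertwining of Ricci endomorphisms (so $\varphi$ commutes with $\beta$) and preservation of the parallel density (so $\det\varphi=\pm1$). The only cosmetic difference is that you write the $h$-condition as $\varphi\cdot h_1 = h_2\circ\check f$, which matches \eqref{eqn_tHEequiv} slightly more carefully than the paper's abbreviated $\varphi\cdot h_1 = h_2$.
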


We will now describe three special cases of the harmonic-Einstein equations on twisted principal bundles.

\subsection{Invariant expanding Ricci solitons on principal bundles}\label{sec:untwisted}

To justify the need to deal with twisted principal bundles, we now show that, if $\rho$ is trivial, then \Cref{Ansatz} only yields Riemannian products (cf.~the Splitting Conjecture for negative Einstein manifolds with symmetry \cite{BL23b}):

\begin{prop}\label{Prop:UntwistedSolitons}
Let \((M, g, X)\) be an expanding Ricci soliton obtained from a solution $(g_B, \rho, h)$ to the twisted harmonic-Einstein equations \eqref{Eq:HarmonicEinsteinEquations} with trivial $\rho$. 
Then, \((M,g)\) is isometric to the Riemannian product \((B,g_B)\times (\sans{N},\bar{h})\) of a compact Einstein manifold and a nilsoliton.
\begin{proof}
Since the action of $\pi_1(B)$ on $\ngo$ is trivial, 
\(h:\tilde{B}\to \sym_+^2(\mathfrak{n}^*)\) is \(\pi_1(B)\)--invariant. 
Hence, \(h\) descends to a harmonic map \(\check{h}:(B,g_B)\to (\sym_+^2(\mathfrak{n}^*),g_{\mathsf{sym}})\), which must necessarily be constant \cite{EellsLemaire78}*{Paragraph~4.5}. It follows that $M \simeq B \times \N$ is a Riemannian product. Moreover, from \eqref{Eq:HarmonicEinsteinEquations(B)}, $\tilde g_B$ is Einstein, and so is $g_B$. 
\end{proof}
\end{prop}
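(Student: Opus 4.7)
The plan is to leverage the triviality of $\rho$ to show that the equivariant map $h$ descends to $B$ and, by a maximum-principle argument, must be constant. Once $h$ is constant, the Ansatz and the twisted harmonic-Einstein equations force both the curvature of the horizontal distribution and the shape operators of the fibres to vanish, giving a Riemannian product structure, and the Einstein condition on the base falls out of \eqref{Eq:HarmonicEinsteinEquations(B)}.

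First I would observe that, since $\rho$ is trivial, the $\rho$-equivariance condition in \Cref{prop_metricsadM} (iii) is simply $\pi_1(B)$-invariance, so $h\colon \tilde B\to \sym^2_+(\ngo^*)$ descends to a well-defined map $\check h\colon (B,g_B)\to (\sym^2_+(\ngo^*),g_{\sans{sym}})$. Equation \eqref{Eq:HarmonicEinsteinEquations(V)} then says that $\check h$ is a harmonic map with compact source and nonpositively curved target (the image in fact lying in the symmetric space $\G/\K$, which is of noncompact type). Invoking the standard Bochner–Eells–Sampson maximum principle, as recorded in \cite{EellsLemaire78}*{\S 4.5}, this forces $\check h$ to be constant.

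With $d\check h\equiv 0$, the term $h^*g_{\sans{sym}}$ in \eqref{Eq:HarmonicEinsteinEquations(B)} vanishes, so $\Ric(g_B)=-\tfrac12 g_B$ and $(B,g_B)$ is Einstein. On the soliton side, the identification $h^{-1}dh=2L$ (as used at the end of the proof of \Cref{prop_AnsatthenHE}) implies that all shape operators $L_X$ vanish; combined with $A=0$ (which is part of \ref{item:A_flat} of \Cref{Ansatz}) and with the triviality of the monodromy $\rho$ of the flat connection $\hca$, the bundle $M\simeq \tilde B\times_\rho \N$ becomes the trivial bundle, with $\hca$ the trivial product connection. The fibre metric being $h_0=:\bar h$, independent of the base point, the geometry is that of a Riemannian product $(B,g_B)\times(\N,\bar h)$.

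Finally, the Ansatz condition \ref{item:A_soliton} together with the definition $\beta=2\,\RicE{\bar h}$ and the fact that $(\N,\bar h)$ is a left-invariant Ricci soliton with cosmological constant $-\tfrac12$ identifies $(\N,\bar h)$ as a nilsoliton. There are no real obstacles here: the one subtle ingredient is the rigidity step showing $\check h$ is constant, and that is precisely the cited Eells–Lemaire result, applied to a harmonic map from a closed manifold into a Hadamard space.
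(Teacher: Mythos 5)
Your proof is correct and follows essentially the same route as the paper: you descend $h$ to $\check h\colon B\to \sym^2_+(\ngo^*)$ using triviality of $\rho$, invoke the Eells--Lemaire rigidity result for harmonic maps from a compact source into a nonpositively curved target to get $\check h$ constant, and read off the product structure and the Einstein condition from \eqref{Eq:HarmonicEinsteinEquations(B)}. You fill in more detail than the paper in the final step (vanishing shape operators via $h^{-1}dh=2L$, plus $A=0$ and trivial monodromy giving the trivial bundle with product connection), which is a welcome expansion of the paper's terse "it follows that $M\simeq B\times\N$ is a Riemannian product."
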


\subsection{Totally geodesic vertical metrics}\label{sec:totgeo}

A class of solutions to the twisted harmonic-Einstein equations are given by triples \((g_B,\rho,h)\) where \(h\) is not just harmonic, but totally geodesic: \(\nabla dh\equiv 0\). These appear to be a natural starting point, since symmetric spaces contain a large number of totally geodesic manifolds given algebraically by Lie triple systems \cite{Helgason78}*{\S~IV.7}. However, they give rise to known examples.

\begin{prop}\label{prop_totgeo}
Let \((M, g, X)\) be an expanding Ricci soliton obtained from a solution $(g_B, \rho, h)$ to the twisted harmonic-Einstein equations \eqref{Eq:HarmonicEinsteinEquations} with totally geodesic $h$. 
Then, the universal cover \((\tilde{M},\tilde{g})\) is the Riemannian product of a homogeneous Ricci soliton and an Einstein manifold.
\begin{proof}

To begin with, assume that \(h\) is an immersion. Observe that \(h\) is necessarily an injective immersion: if \(h_{b_1}=h_{b_2}\) then \(h\) sends any geodesic in \(\tilde{B}\) that joints \(b_1\) to \(b_2\) to a geodesic in \(\sans{G}/\sans{K}\) that starts and ends at the same point. Since \(\sans{G}/\sans{K}\) is simply connected with non-positive curvature, the only way this can occur is if \(b_1=b_2\). Hence, we may identify \(\tilde{B}\) with its image, a totally geodesic submanifold of \( \sans{G}/\sans{K}\).

We claim that \(\tilde{B}\subset \sans{G}/\sans{K}\) is complete in the induced metric $h^* g_{\rm sym}$. Indeed, let \(\tilde{B}=\tilde{B}_0\times\cdots\times \tilde{B}_r\) be the de Rham decomposition \cite{KobayashiNomizu63}*{Thm.~IV.6.2} of \(\tilde{B}\) (for the metric \(g_B\)), where \(\tilde{B}_0\) is a Euclidean space and the holonomy representation of \((\tilde{B}_i,g_i)\) is irreducible for \(i>0\). (Here \(g_i=\tilde{g}_B|_{\tilde{B}_i}\).) Notice that the decomposition \(T\tilde{B}=T\tilde{B}_0\oplus\cdots\oplus T\tilde{B}_r\) is orthogonal for \(h^*g_{\sans{sym}}\) by \eqref{Eq:HarmonicEinsteinEquations(B)}. Moreover, on each \(\tilde{B}_i\), \(i>0\), we have \(h^*g_{\mathsf{sym}}=c_i g_{B}\) for some \(c_i>0\) since \(h^*g_{\mathsf{sym}}\) is parallel and the holonomy acts irreducibly on \(T\tilde{B}_i\). The restriction of \(h^*{g}_{\sans{sym}}\) to \(\tilde{B}_0\) is a Euclidean metric, so up to isometry we may assume that \(h^*g_{\sans{sym}}\) agrees with \(\tilde{g}_0\) on \(\tilde B_0\). Hence, \(h^*g_{\sans{sym}}\) is complete, being the Riemannian product of complete metrics.

Therefore, by \cite{Helgason78}*{Thm.~IV.7.2},  \(\tilde B \subset \G/\K\) is the orbit of a connected reductive subgroup \(\sans{G}'\le \sans{G}\).
Since \(\sans{G}'\) is connected, it preserves the irreducible components of \(\tilde{B}\). We conclude that \(\sans{G}'\) also acts by isometries of \(g_B\). Since \(\sans{G}\) consists of automorphisms of \(\sans{N}\), we can form the semi-direct product \(\sans{G}'\ltimes \sans{N}\). This acts naturally on \(\tilde{M}\simeq\tilde{B}\times \sans{N}\) by \[(\varphi,n)\cdot(b,x) = (\varphi\cdot b, \varphi(n\cdot x)).\] Clearly this action is transitive. It is also isometric since \[\varphi^*(g_B+h_{\varphi(b)}) = g_B+ (\varphi^{-1}\cdot h_{\varphi(b)} )=g_B+h_b.\] Hence, the claim holds in this case.

Suppose now that \(h\) is any \(\rho\)-equivariant totally geodesic map. By \cite{Vilms70}*{Thm.\ 2.2}, we can factor \(h\)  into
\[
        h=h'\circ\eta,
\]
where \(\eta:\tilde{B}\to \tilde{B}'\) is a totally geodesic Riemannian submersion and \(h':\tilde{B}'\to  \sans{G}/\sans{K}\) is a totally geodesic immersion. Since \(\tilde{B}\) is simply connected, we have \cite{EellsLemaire78}*{Par.\ 4.8} that \((\tilde{B},g_B)=(\tilde{B}',g_B')\times (F,g_F)\) is a Riemannian product and \(\eta:(\tilde{B},g_B)\to (\tilde{B}',g_B')\) is the projection onto the first factor. Since \(TF = \ker dh\), \eqref{Eq:HarmonicEinsteinEquations(B)} implies \((F,g_F)\) is Einstein with \(\Ric(g_F) = -\tfrac{1}{2} g_F\). Moreover, it is clear that \((g_B',\rho',h')\) is a solution to the twisted harmonic-Einstein equation with \(h'\) totally geodesic and injective. Letting \((M',g')\) be the Ricci soliton associated to \((g_B',\rho',h')\) via \Cref{prop_AnsatthenHE}, we see that 
\[(\tilde{M},\tilde{g}) \simeq_{\text{isom.}} (\tilde{M}',\tilde{g}')\times (F,g_F).\]
Since \((\tilde{M}',\tilde{g}')\) is homogeneous, the result follows.
\end{proof}
\end{prop}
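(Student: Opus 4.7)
The plan is to handle first the case when $h$ is an immersion and then reduce the general case to this one by a factorisation through a Riemannian submersion. Throughout, I will exploit the rigidity of totally geodesic maps into a simply connected non-positively curved symmetric space.

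In the immersion case, I would first argue that $h$ must be injective: a totally geodesic map sends geodesics to geodesics, so two preimages $b_1,b_2 \in \tilde B$ of the same point would force $h$ to map any joining geodesic to a geodesic loop in $\G/\K$, which by Hadamard–Cartan forces $b_1 = b_2$. Thus $h$ realises $\tilde B$ as a totally geodesic submanifold of $\G/\K$. To invoke Helgason's structure theorem (\cite{Helgason78}*{Thm.~IV.7.2}), I need completeness of the induced metric $h^{*}g_{\mathsf{sym}}$. For this I would use the de Rham decomposition $(\tilde B, g_B) = \tilde B_0 \times \tilde B_1 \times \cdots \times \tilde B_r$, with $\tilde B_0$ Euclidean and the other factors irreducible. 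Since $h$ is totally geodesic, $h^{*}g_{\mathsf{sym}}$ is parallel with respect to $\nabla^{g_B}$; the Einstein equation \eqref{Eq:HarmonicEinsteinEquations(B)} forces orthogonality of the decomposition for $h^{*}g_{\mathsf{sym}}$, and irreducibility of the holonomy on each $\tilde B_i$ ($i\ge 1$) forces $h^{*}g_{\mathsf{sym}}|_{\tilde B_i} = c_i\, g_B|_{\tilde B_i}$ for some $c_i > 0$. On the flat factor $\tilde B_0$, parallelism makes the induced metric a constant positive semidefinite symmetric form; using that $h$ is an immersion this form is in fact positive definite, so up to isometry it agrees with the Euclidean structure. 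Hence $h^{*}g_{\mathsf{sym}}$ is a Riemannian product of complete metrics, and therefore complete.

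Helgason's theorem then produces a connected reductive subgroup $\G' \le \G$ acting transitively on $h(\tilde B)$. Because $\G'$ is connected it must preserve each de Rham factor of $\tilde B$, and since the induced metric is a rescaling of $g_B$ on each factor, the $\G'$-action on $\tilde B$ is isometric for $g_B$ itself. I would then form the semidirect product $\G' \ltimes \N$ and make it act on $\tilde M \simeq \tilde B \times \N$ via $(\varphi, n) \cdot (b, x) = (\varphi \cdot b,\, \varphi(n \cdot x))$. Transitivity is immediate from transitivity on $\tilde B$ and on $\N$. For the isometry property, the equivariance identity $\varphi \cdot h_b = h_{\varphi \cdot b}$ (which holds because $\varphi \in \G'$ is realised geometrically as an element of $\G$ acting on the totally geodesic submanifold $h(\tilde B)$) combines with the $\G$-invariance of $g_{\mathsf{sym}}$ to give $\varphi^{*}(g_B + h_{\varphi(b)}) = g_B + h_b$. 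Thus $(\tilde M,\tilde g)$ is homogeneous, and being a Ricci soliton it is a homogeneous Ricci soliton.

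For the general case, I would invoke Vilms' theorem (\cite{Vilms70}*{Thm.~2.2}) to factor $h = h' \circ \eta$ where $\eta:\tilde B \to \tilde B'$ is a totally geodesic Riemannian submersion and $h':\tilde B' \to \G/\K$ is a totally geodesic immersion. Since $\tilde B$ is simply connected, this submersion globally splits as a Riemannian product $(\tilde B, g_B) = (\tilde B', g_B') \times (F, g_F)$, with $\eta$ the projection onto the first factor. The kernel distribution is $TF = \ker dh$, so $h^{*}g_{\mathsf{sym}}$ vanishes on $TF$, and \eqref{Eq:HarmonicEinsteinEquations(B)} restricted to the $F$ factor gives $\Ric(g_F) = -\tfrac{1}{2}g_F$, i.e.\ $(F,g_F)$ is Einstein. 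The triple $(g_B',\rho',h')$ then solves the twisted harmonic-Einstein equations with injective totally geodesic $h'$, and the immersion case applies to its associated soliton $(\tilde M', \tilde g')$. Finally one checks $(\tilde M,\tilde g) \simeq (\tilde M', \tilde g') \times (F, g_F)$ as Riemannian manifolds.

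The main obstacle I anticipate is the completeness argument for $h^{*}g_{\mathsf{sym}}$ and, closely related, proving that the reductive subgroup $\G'$ supplied by Helgason's theorem actually acts isometrically on $(\tilde B, g_B)$ rather than only on its image in $\G/\K$. Both hinge on carefully combining parallelism of $h^{*}g_{\mathsf{sym}}$ with the de Rham splitting, particularly on treating the flat factor where holonomy irreducibility is unavailable.
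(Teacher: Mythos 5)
Your proposal is correct and follows essentially the same path as the paper's proof: first reduce to the immersion case (injectivity via geodesics in a Hadamard manifold, completeness via the de Rham decomposition and parallelism of $h^*g_{\mathsf{sym}}$, Helgason's totally geodesic submanifold theorem to get a connected reductive $\G'\le\G$, then transfer homogeneity to $\tilde M$ via the $\G'\ltimes\N$-action), and then handle the general case by Vilms' factorisation into a totally geodesic submersion followed by a totally geodesic immersion. Your slight elaboration on the flat de Rham factor — noting that the parallel semidefinite form is positive definite because $h$ is an immersion — spells out a step the paper leaves terse, but the argument is the same.
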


\begin{example}\label{ex:uniformisingrep}
Let \((B^2,g_B)\) be a closed hyperbolic surface,
and let \(\rho:\pi_1(B)\to \SL_2(\R)\) be a lift of its holonomy representation to \(\SL_2(\R)\). The developing map \(h:\tilde{B}\to \SL_2(\R)/\SO(2)\) is an isometry. It is also \(\rho\)--equivariant by construction. Hence, by \Cref{prop_AnsatthenHE} and \Cref{prop_totgeo} we obtain a locally homogeneous expanding soliton on \(M^4=\tilde{B}\times_\rho \R^2\) (cf.~\cites{semialglow,Lott10}).
\end{example}

\begin{example}
Let \((B^2,g_B)\) be a genus 2 surface with \(\Ric(g_B) = -\tfrac{1}{2}g_B\). For \(\N=\R^2\), we have \(\G=\det^{-1}(\pm 1)\).
Let \(\rho:\pi_1(B)\to \G\) be the representation defined on a generating set \(\{a_1,b_1,a_{2},b_2\}\) of \(\pi_1(B)\) by \[\rho(a_1) = \begin{pmatrix} 0& 1\\ 1&0 \end{pmatrix},\qquad \rho(a_1)=\rho(b_1)=\rho(b_2)=\begin{pmatrix}  1&0\\ 0&1 \end{pmatrix}.\]
Since \(\rho(\pi_1(B))\le \Or(2)\), the constant map \(h:\tilde{B}\to \det^{-1}(\pm 1)/\Or(2)\), \(h_b= \bar{h}\) is \(\rho\)-equivariant and totally geodesic. Hence, the resulting soliton metric \(g\) on \(M^4=\tilde B\times_\rho \R^2\) is locally isometric to the Riemannian product of \((B^2,g_B)\) with \((\R^2,h_{\rm nil})\), where \(h_{\rm nil}\) is the Euclidean metric. Notice, however, that \((M,g)\) cannot be globally isometric to a product, since the horizontal lift of a closed geodesic in \((B^2,g_B)\) representing \(a_1\) is not closed.
\end{example}

\subsection{One dimensional base}\label{sec:dimB=1}

Since harmonic maps from one-dimensional manifolds are geodesics, \Cref{prop_totgeo}  implies that solutions of the harmonic-Einstein equations in this case  give rise to locally homogeneous Ricci solitons. Homogeneous Ricci solitons are solvsolitons by \cites{BL18,alek_sol}, so the solitons coming from solutions of the twisted harmonic-Einstein equations over a one-dimensional base must be quotients of solvsolitons \cite{solvsolitons}; see also Example~4.27 in \cite{Lot07}. In this situation, we can describe the resulting homogeneous manifold in more familiar terms as follows.

Let \((M,g)\) be a Ricci soliton with a twisted \(\sans{N}\)--action coming from a solution \((g_B,\rho,h)\) of the twisted harmonic-Einstein equations, and such that \(\dim B=1\).  The bundle metric is a harmonic map  \(h:\R \to \G/\K\), thus a geodesic. We may assume that \(h_0 = h_{\rm nil}\) is a nilsoliton.  It is well known \cite{Helgason78}*{Thm.\ IV.3.3} that geodesics in a symmetric space are one-parameter subgroups: 
\[h_s=e^{tD}\cdot h_{\rm nil}= h_{\rm nil}(e^{-tD}\cdot,e^{-tD}\cdot ),
\]
for some \(D\in \mathfrak{p}\), \(\mathfrak{g}=\mathfrak{k}\oplus \mathfrak{p}\) being a Cartan decomposition of \(\mathfrak{g} = {\rm Lie}(\G)\). In our situation, \(\mathfrak{p}\) consists of \(h_{\rm nil}\)-self-adjoint derivations of \(\mathfrak{n}\). It follows that $(M,g)$ is a quotient of the solvable Lie group $\R\ltimes_D \N$ endowed with the \emph{solvsoliton} metric $ds^2 + h_s$ \cite{solvsolitons} (cf.~\Cref{ex:RxN}).

\section{The equations over a Riemann surface}\label{sec_riemsurf}

In this section, we consider the twisted harmonic-Einstein equations \eqref{eqn_twistharmEin} for a pair $(B^2, \G/\K)$, where $B^2$ is a compact orientable surface, and $\G/\K$ is an arbitrary symmetric space of non-positive curvature. If \(c=\{e^{2u}\check{g} : u \in \mathcal{C}^\infty(B)\}\) is a conformal class of metrics on \(B^2\), then the pair \( \Sigma = (B^2,c)\) is a  \emph{Riemann surface}.

Let \((B^2,\check{g})\),  \((N,g_N)\) be Riemannian manifolds and \(h:(B^2,\check{g})\to (N,g_N)\) a smooth map.  Since the source is a surface, if \(h\) is harmonic with respect to \(\check{g}\), then \(h\) is also harmonic with respect to any metric conformal to \(\check{g}\). Therefore, it makes sense to talk about harmonic maps from a Riemann surface $\Sigma$. A map \(h:(B^2,c)\to (N,g_N)\) is called \emph{weakly conformal} if \[h^*g_N = \nu \check{g},\] for some (and hence any) metric \(\check{g}\in c\) and some smooth non-negative function \(\nu:B^2\to \R\). We emphasise that \(\nu\) can have zeros; for example, any constant map is weakly conformal, with \(\nu \equiv 0\). 

\begin{example}
Recall that (after fixing an orientation), a Riemann surface admits a holomorphic atlas. If \(\dim N=2\), then a map \(h:\Sigma \to (N^2,g_N)\) is weakly conformal if and only if \(h\) is holomorphic or anti-holomorphic.
\end{example}

Suppose now that \(\G\) is a real reductive Lie group with maximal compact subgroup \(\K\le \G\). If \((g_B,\rho,h)\) solves the twisted harmonic-Einstein equation \eqref{eqn_twistharmEin} for the pair \((B^2,\G/\K)\), then \(h\) is necessarily weakly conformal by \eqref{Eq:HarmonicEinsteinEquations(B)}. Indeed,  \(\Ric(g_B)=K(g_B) g_B\), where \(K(g_B)\) denotes the Gauss curvature. This suggests the following decoupling of the twisted harmonic-Einstein equation: 
\begin{enumerate}
    \item solve for \(c\) and \(h\) such that \(h:(\tilde B,c)\to (\sans{G}/\sans{K},g_{\sans{sym}})\) is weakly conformal, harmonic and $\rho$-equivariant;
    \item solve the base equation \eqref{Eq:HarmonicEinsteinEquations(B)} within the conformal class \(c\).
\end{enumerate} 

In fact, given a conformal structure on \(B^2\), the existence of a $\rho$-equivariant harmonic map can be characterised purely in terms of \(\rho\in \Hom(\pi_1(B),\G)\):

\begin{theorem}[Corlette, Donaldson, Labourie, \cites{Cor88,Don87,Lab91}]\label{thm_CorDonLab}
Given \(\rho\in \Hom(\pi_1(B),\sans{G})\), there exists a \(\rho\)-equivariant harmonic map \(h_c:(\tilde{B},c)\to (\sans{G}/\sans{K},g_{\sans{sym}})\) if and only if \(\rho\) is reductive. Moreover, \(h_c\) is unique up to left multiplication by elements \(g\in \sans{G}\) that centralise $\rho(\pi_1(B))$.
\end{theorem}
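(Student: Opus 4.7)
The plan is to phrase the problem variationally, realising $h_c$ as a minimiser of the equivariant Dirichlet energy
\[
E(h) = \int_{\mathcal{F}} |dh|_{c}^{2} \, d\mathrm{vol}_c,
\]
where $\mathcal{F} \subset \tilde B$ is a fundamental domain for the $\pi_1(B)$-action. Because $\G$ acts on $\G/\K$ by isometries, and because the Dirichlet integrand on a surface is conformally invariant, $E$ is well defined on $\rho$-equivariant maps and depends only on the conformal class $c$. Harmonic maps are precisely the critical points of $E$; moreover, since $\G/\K$ is a non-positively curved (NPC) target, the energy is geodesically convex along pointwise geodesic interpolations of equivariant maps, so the theorem reduces to existence and uniqueness of minimisers.

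For the necessity of reductivity (the ``only if'' direction), I would argue that if $\rho$ is non-reductive, then its image lies in a proper parabolic subgroup $\sans{P} \subsetneq \G$ with non-trivial unipotent radical $\sans{U}$. Picking a one-parameter subgroup $a_t \in \G$ whose conjugation action contracts $\sans{U}$ to the identity, a direct calculation using the convexity of the distance function on $\G/\K$ shows that for any $\rho$-equivariant map $h$ one can construct nearby $\rho$-equivariant variations of strictly smaller energy by sliding along the parabolic direction. Hence no critical point of $E$ can exist, ruling out a $\rho$-equivariant harmonic map.

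For existence, I would follow the approach of Eells--Sampson, Donaldson and Corlette: either evolve an arbitrary smooth $\rho$-equivariant initial datum under the equivariant harmonic map heat flow, or minimise $E$ directly via a Palais--Smale-type compactness scheme. The \emph{main obstacle}, and in my view the heart of the theorem, is the coercivity/properness estimate: one must show that if $(h_n)$ is an energy-minimising sequence and the displacement $d_{\G/\K}(h_n(b_0), p_0)$ diverges modulo the centraliser $Z_{\G}(\rho(\pi_1(B)))$, then after renormalisation a subsequential limit traces a geodesic ray in $\G/\K$ whose stabiliser at infinity is a proper parabolic subgroup containing $\rho(\pi_1(B))$, contradicting reductivity. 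Once properness is secured, weak $W^{1,2}$-compactness combined with standard mean-convexity arguments for NPC targets upgrades a minimising sequence to a smooth harmonic limit.

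For uniqueness, given two $\rho$-equivariant harmonic maps $h_0, h_1 : \tilde B \to \G/\K$, I would form the pointwise geodesic homotopy $h_t(b) := \gamma_b(t)$, where $\gamma_b$ is the unique geodesic from $h_0(b)$ to $h_1(b)$. By non-positive curvature and the Eells--Sampson second-variation formula, $t \mapsto E(h_t)$ is convex, and since both endpoints minimise $E$, this function is constant in $t$. This forces $\nabla dh_t \equiv 0$ in the $t$-direction, so the variation field along $h_0$ is a parallel section of $h_0^{*} T(\G/\K)$. Under the symmetric-space structure of $\G/\K$, parallel variation fields exponentiate to a one-parameter family $(g_s) \subset \G$ of isometries with $h_1 = g_1 \cdot h_0$, and the $\rho$-equivariance of both endpoints forces each $g_s$ to centralise $\rho(\pi_1(B))$, which is the claimed uniqueness.
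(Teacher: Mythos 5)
The paper does not prove this result; it is quoted as a theorem of Corlette, Donaldson and Labourie, so there is no ``paper's own proof'' to match against. Measuring your sketch against the arguments in those references: the variational framing, the convexity of the energy along geodesic homotopies, the properness/coercivity step as the heart of existence, and the second-variation uniqueness argument are all faithful to the standard proofs. The uniqueness paragraph is essentially Corlette's argument: a zero second-variation forces the geodesic-homotopy variation field to be parallel, and in a symmetric space of non-positive curvature this integrates to a transvection in $\G$ that must normalise (and, by equivariance of both endpoints, centralise) $\rho(\pi_1(B))$.

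The genuine gap is in the ``only if'' direction. Translating a $\rho$-equivariant map $h$ by $a_t$ does not produce a $\rho$-equivariant map: $a_t\cdot h$ is equivariant for the conjugate representation $a_t\rho a_t^{-1}$, and since the energy functional is invariant under conjugation of the representation, $E(a_t\cdot h)=E(h)$ with no strict decrease. So ``nearby $\rho$-equivariant variations of strictly smaller energy'' are not obtained this way, and the conclusion that no critical point can exist does not follow. The standard argument instead exploits the fact that $\rho(\pi_1(B))$ fixes a point $\xi$ at infinity (the one stabilised by $\mathsf P$): the Busemann function $B_\xi$ is $\mathsf P$-equivariant up to additive constant and convex, so $B_\xi\circ h$ descends to a subharmonic function on the compact quotient and must be constant by the maximum principle. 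This confines $h$ to a horosphere, and an iterative projection onto the Levi factor (over the parabolic filtration) eventually shows that $\rho$ is conjugate into a Levi subgroup, i.e.\ reductive — giving the contrapositive. Your sketch correctly identifies the parabolic datum but replaces this Busemann/maximum-principle step with a conjugation-sliding that does not do the work.
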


A representation \(\rho\in \Hom(\pi_1(B),\sans{G})\) is called \emph{reductive} if \(\Ad\circ \rho \in \Hom(\pi_1(B), \mathsf{Gl}(\mathfrak{g}))\) 
decomposes as a sum of irreducible representations. If \(\G\) is algebraic, this is equivalent to the Zariski closure of \(\rho(\pi_1(B))\le \G\) being reductive. We denote the set of reductive representations by \(\Hom^{\sans{red}}(\pi_1(B),\G)\). By \Cref{thm_CorDonLab}, given \(\rho\in \Hom^{\sans{red}}(\pi_1(B),\G)\), the first step in the above decoupling reduces to finding a conformal structure \(c\) for which the harmonic map \(h_c\) is also weakly conformal. Notice that this holds for some $h_c$ if and only if it holds for any, since the action of $\G$ on $(\G/\K, g_{\rm sym})$ is by isometries. It turns out that this condition is also sufficient:

\begin{theorem}\label{Thm:BaseEqn}
If \(h:(\tilde{B}^2,c)\to (\G/\K,g_{\mathsf{sym}})\) is a harmonic, \(\rho\)-equivariant map, then it is possible to solve \eqref{Eq:HarmonicEinsteinEquations(B)} for \(g_B\in c\) if and only if \(h\) is weakly conformal. Moreover, when it exists, such a solution is unique.
\begin{proof}

We have already observed the necessity of \(h\) being weakly conformal, so let us focus on suffiency. Let \(\check{g}\in c\) be the hyperbolic metric and assume that \(h^*g_{\mathsf{sym}}=\nu \check{g}\) for some function \(\nu\ge 0\). Since \(h\) is \(\rho\)-equivariant and \(g_{\sans{sym}}\) is \(\G\)-invariant, the pullback \(h^*g_{\sans{sym}}\) is \(\pi_1(B)\)-invariant. Thus, \(h^*g_{\sans{sym}}\) is the pullback of a tensor on \(B^2\) via the covering map. We will abuse notation and use the same symbol for the tensor on \(B^2\) that pulls back to \(h^*g_{\sans{sym}}\).

We must show that \eqref{Eq:HarmonicEinsteinEquations(B)} holds for some \(g_B\in c\). Any metric in \(g_B\in c\) can be written as \(g_B=e^{2u}\check{g}\) for some \(u:B\to \R\). The Gauss curvature of \(g_B\) is \[K(g_B) = e^{-2u}(-\Delta_{\bar{g}} u - 1).\] Hence, \eqref{Eq:HarmonicEinsteinEquations(B)} reduces to the scalar equation \begin{equation}\label{Eq:ScalarTwistedEinstein}
 \Delta_{\check{g}} u =\tfrac12 e^{2u} - 1 -\nu .
\end{equation}
To solve \eqref{Eq:ScalarTwistedEinstein}, consider the functional \(I:H^1(B)\to \R\) defined by \[I[u] = \int_B |d u|^2 - u\big(1+\nu\big)\, {\rm dvol}_{\check{g}}.\] We minimise this subject to the Gauss-Bonnet constraint \begin{equation}\label{Eq:Constraint}
 2\pi\chi(B)+\frac{1}{2}\int_B e^{2u}\, {\rm dvol}_{\check{g}} =\int_B \nu \,{\rm dvol}_{\check{g}}
\end{equation} 
Observe that the set of functions \(u:B\to \R\) satisfying \eqref{Eq:Constraint} is non-empty since \(\chi(B)<0\). It then follows, in the exact same way as for the prescribed Gauss curvature equation, that such a minimiser exists and, moreover, is a weak solution of \eqref{Eq:HarmonicEinsteinEquations(B)}; see \cite{Taylor11}*{Ch.~14 $\S$2}. Elliptic regularity then shows that the weak solution is a classical solution and is smooth. Conformally related metrics on surfaces have the same Ricci curvature if and only if (the logarithm of) their conformal factor is harmonic; since \(B^2\) is compact, any harmonic function is constant. This constant must be 1 by \eqref{Eq:HarmonicEinsteinEquations(B)}, so the solution is unique. 
\end{proof}
\end{theorem}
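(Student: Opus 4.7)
I would split into necessity, existence, and uniqueness.

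\emph{Necessity.} On a surface the Ricci tensor is $\Ric(g_B)=K(g_B)\,g_B$ with $K$ the Gauss curvature. Substituting into \eqref{Eq:HarmonicEinsteinEquations(B)} yields
\[
h^*g_{\mathsf{sym}} = \bigl(K(g_B)+\tfrac12\bigr) g_B,
\]
so $h^*g_{\mathsf{sym}}$ is pointwise a scalar multiple of any metric in $c$, which is exactly weak conformality. This already forces the conformal factor $\nu$ to be non-negative, and pairing with Gauss--Bonnet forces $\chi(B)<0$.

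\emph{Existence.} Suppose $h^*g_{\mathsf{sym}}=\nu\,\check g$ for the hyperbolic representative $\check g\in c$ (uniformisation; $\check g$ exists because $\chi(B)<0$), and note that the $\rho$-equivariance of $h$ and the $\G$-invariance of $g_{\mathsf{sym}}$ make $h^*g_{\mathsf{sym}}$ well-defined on $B$. Writing an arbitrary $g_B\in c$ as $g_B=e^{2u}\check g$ and using the standard conformal transformation of Gauss curvature $K(g_B)=e^{-2u}(-1-\Delta_{\check g}u)$, equation \eqref{Eq:HarmonicEinsteinEquations(B)} reduces to the scalar semilinear PDE
\[
\Delta_{\check g}u = \tfrac12 e^{2u}-1-\nu.
\]
This is structurally identical to the prescribed Gauss curvature equation in the case of negative prescribed curvature (à la Berger--Kazdan--Warner), and I would attack it by the direct method. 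Minimising
\[
I[u]=\int_B\bigl(|du|_{\check g}^2 -2u(1+\nu)\bigr)\,d\mathrm{vol}_{\check g}
\]
on the affine constraint $\int_B e^{2u}d\mathrm{vol}_{\check g}=-4\pi\chi(B)+2\int_B\nu\,d\mathrm{vol}_{\check g}$ (obtained by integrating the PDE and invoking Gauss--Bonnet; the set is non-empty precisely because $\chi(B)<0$ and $\nu\ge 0$) yields, by Moser--Trudinger and Poincaré together with weak lower semicontinuity, a weak minimiser satisfying the Euler--Lagrange equation. The Lagrange multiplier is forced to equal the right value by re-integrating, so the weak solution solves \eqref{Eq:ScalarTwistedEinstein}. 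Standard elliptic regularity bootstraps it to a smooth classical solution and gives the sought $g_B=e^{2u}\check g$.

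\emph{Uniqueness.} If $g_B$ and $g_B'=e^{2v}g_B$ both solve \eqref{Eq:HarmonicEinsteinEquations(B)}, subtracting cancels $h^*g_{\mathsf{sym}}$ and yields $\Ric(g_B')-\Ric(g_B)=-\tfrac12(g_B'-g_B)$; on a surface the left-hand side is a scalar multiple of $g_B$ involving $\Delta_{g_B}v$, producing after simplification a linear equation that forces $v$ to be $g_B$-harmonic, hence constant on the closed surface $B$. The constant is then pinned down to zero by plugging back into the equation.

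\emph{Expected difficulty.} Necessity and uniqueness are algebraic; the real work lies in the existence step. The main obstacle is proving coercivity of $I$ on the constrained set and confirming the constraint is admissible — this hinges crucially on $\chi(B)<0$ and $\nu\ge 0$, so weak conformality of $h$ is not only necessary but exactly what makes the analytic machinery go through. The scheme is otherwise the standard negative-curvature prescription argument, so no fundamentally new PDE ingredient should be required beyond Moser--Trudinger.
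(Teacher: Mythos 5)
Your proposal follows the paper's own route essentially verbatim for the existence step: reduce to the scalar prescribed-curvature-type PDE via the hyperbolic representative, solve by constrained minimisation, then invoke elliptic regularity. The necessity observation (Ricci on a surface is $K\cdot g_B$, so \eqref{Eq:HarmonicEinsteinEquations(B)} forces $h^*g_{\mathsf{sym}}$ to be a pointwise multiple of the metric) is the same as the one the paper invokes from the preceding discussion. Two small points deserve correction.

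First, Gauss--Bonnet does not force $\chi(B)<0$. With $K(g_B)=\nu-\tfrac12$ one gets $2\pi\chi(B)=\int_B\nu\,d\mathrm{vol}_{g_B}-\tfrac12\mathrm{Area}(g_B)$, which can have either sign; the hypothesis $\chi(B)<0$ is a standing assumption of the section (genus $\ge 2$), needed both to produce the hyperbolic representative $\check g$ and to make the constraint set nonempty.

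Second, the uniqueness argument as written has a gap: with $g_B'=e^{2v}g_B$, subtracting the two instances of \eqref{Eq:HarmonicEinsteinEquations(B)} gives $-\Delta_{g_B}v\cdot g_B=-\tfrac12(e^{2v}-1)\,g_B$, i.e.\ the \emph{nonlinear} equation $\Delta_{g_B}v=\tfrac12(e^{2v}-1)$; it does not reduce to a linear equation forcing $v$ to be harmonic. The right tool is the maximum principle: at an interior maximum of $v$ one has $\Delta v\le 0$ hence $e^{2v}\le 1$ hence $v\le 0$, and at a minimum $v\ge 0$, so $v\equiv 0$. (The paper's own one-line remark is also somewhat elliptical here, but this is the mechanism that actually closes the argument.) Otherwise your plan is sound and matches the paper's proof.
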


We finish this section by recalling one approach to finding weakly conformal harmonic maps due to Schoen--Yau \cite{SchoenYau79} and Sacks--Uhlenbeck \cite{SacksUhlenbeck82}. It has been extended to the twisted setting by Labourie \cite{Labourie08}. For \(\rho\)-equivariant maps \(h:(\tilde B, c)\to (\G/\K,g_{\sans{sym}})\), the tensor \(h^*g_{\sans{sym}}\) on \(\tilde B\) is \(\pi_1(B)\)-invariant, and hence defines a tensor on \(B^2\). In particular, the energy density \(e_{\check{g}}(h) = \tr_{\check{g}} h^*g_{\sans{sym}}\) is a function on \(B^2\). Since \(B^2\) is compact, \(h:(\tilde B, c)\to (\G/\K,g_{\sans{sym}})\) has a well-defined energy: \[E(h) = \int_B e_{\check{g}}(h)\, {\rm d vol}_{\check{g}}.\] The \(2\)-form \( e_{\check{g}}(h)\, {\rm d vol}_{\check{g}}\) is conformally invariant, so \(E\) depends only on the conformal class \(c\) and not on the choice of \(\check{g}\in c\). Thus, we can define a function \(E:{\rm Teich}(B)\to \R_{\ge 0}\) by
\[
    E_\rho(c) = \inf \{ E(h) : h \,\, \rho-\hbox{equivariant} \},
\]
where \({\rm Teich}(B)\) is the \emph{Teichm\"uller} space of \(B^2\), the space of marked conformal structures on \(B^2\). The infimum is attained \cite{Labourie08}*{Thm.\ 5.2.1} for reductive representations \(\rho\) by the \(\rho\)-equivariant harmonic map \(h_c:(\tilde B^2,c)\to (\G/\K,g_{\sans{sym}})\) from Theorem~\ref{thm_CorDonLab}. Moreover, if \(c\) minimises \(E_\rho\), then \(h_c\) is weakly conformal and harmonic \cite{SacksUhlenbeck82}*{Theorem~1.2}.

If \(\Gamma\le \G\) is a torsion-free, co-compact lattice and \(\rho(\pi_1(B))\le \Gamma\), then a \(\rho\)-equivariant map \(h:\tilde B\to \G/\K\) defines a genuine map \(\check{h}:B\to \Gamma\backslash \G/\K\) between compact manifolds. It follows from \cites{SchoenYau79,SacksUhlenbeck82} that if \(\rho\) is injective, then there is a conformal structure \(c\) on \(B^2\) such that \(E_\rho(c) = \inf_{c'}E_\rho(c')\). Hence, for injective representations \(\rho\) that map into torsion-free, co-compact lattices, we can always solve the twisted harmonic-Einstein equation \eqref{Eq:HarmonicEinsteinEquations(B)} for some conformal structure \(c\). Note that the existence of surface subgroups of cocompact lattices in semisimple Lie groups is known in several cases, for example, in complex semisimple groups and simple rank one symmetric spaces other than \(\SO(2n,1)\); see the survey \cite{Surface_grps_in_lattices} and references therein.

In the twisted setting, Labourie \cite{Labourie08}*{Thm.\ 1.0.3} proved that \(E_\rho\) is a proper function on Teichm\"uller space when \(\rho\) is a \emph{Hitchin representation}. A Hitchin representation is a homomorphism \(\rho \in \Hom(\pi_1(B),\G)\), where \(\G\) is the adjoint form of a split semi-simple Lie group, that can be continuously deformed to \(\iota\circ \rho_{\rm Fuch}\), where \(\rho_{\rm Fuch}:\pi_1(B)\to \PSL_2(\R)\) is the holonomy of a hyperboilic structure on \(B^2\) and \(\iota:\PSL_2(\R)\to \G\) is an irreducible representation. Note that Hitchin representations are automatically irreducible \cite{Labourie08}*{Prop.\ 4.1.5}, and the set of all Hitchin representations forms a union of connected components in the character variety \(\Hom^{\sans{red}}(\pi_1(B),\G)/\G\), so-called \emph{Hitchin components}.

\begin{proof}[Proof of \Cref{maincor_Hit_comp}]
Let \(\rho\in \Hom(\pi_1(B),\SL_n(\R))\) be a representation such that, for \(\check{\rho }=\Ad\circ \rho\),  the class \([\check{\rho}]\in\Hom^{\sans{red}}(\pi_1(B),\PSL_n(\R))/\PSL_n(\R)\) is in the Hitchin component. By \cite{Labourie08}*{Corollary~1.0.4}, there is a conformal structure \(c\) on \(B^2\) and a \(\check{\rho}\)-equivariant map \(h:(\tilde B,c)\to (\PSL_n(\R)/\mathsf{PSO}(n),g_{\sans{sym}})\) that is harmonic and weakly conformal. But \(h\) is also \(\rho\)-equivariant since the action of \(\rho\) on \(\SL_n(\R)/\SO(n) \simeq \PSL_n(\R)/\mathsf{PSO}(n)\) factors through \(\SL_n(\R)\). 

By Theorem~\ref{Thm:BaseEqn}, there is a unique metric \(g_B\in c\) such that \((g_B,\rho,h)\) solves the twisted harmonic-Einstein equations. By Proposition~\ref{prop_AnsatthenHE}, there is a soliton metric on \(M=\tilde B\times _\rho \R^n\).
\end{proof}

\subsection{Higgs Bundles}

By \Cref{Thm:BaseEqn}, solving the twisted harmonic-Einstein equations over a surface reduces to identifying pairs \((c,\rho)\), where 
\(c\) is a conformal structure on \(B^2\) and 
\(\rho\in \Hom^{\sans{red}}(\pi_1(B),\G)\), such that the \(\rho\)-equivariant harmonic map \(h:(\tilde B,c)\to (\G/\K,g_\sans{sym})\) is also weakly conformal. In order to do that, we recall the non-abelian Hodge correspondence, which identifies \(\Hom^{\sans{red}}(\pi_1(B),\G)/\G\) with the moduli space of polystable \emph{$\G$-Higgs bundles}. 

In this subsection, \(\G\) will denote a connected linear real reductive group. We fix a faithful representation \(\G\le \GL_n(\R)\) such that \(\G\) is closed under transposition. This gives the Cartan decomposition \[\G =\K \exp(\mathfrak{p}),\] where \(\K=\G \cap \SO(n)\) and \(\mathfrak{p} = \mathfrak{g}\cap \sym(n)\). We follow \cite{SOpq_Higgs_paper}*{\S2} for the background we need regarding \(\G\)-Higgs bundles. 

\begin{definition}
    A \(\G\)-Higgs bundle is a pair, \((P,\Phi)\), where \(P\to \Sigma\) is a holomorphic \(\K^\C\)-principal bundle and \(\Phi\) is a holomorphic section of the associated bundle \(K_\Sigma\otimes (\ad_P( \mathfrak{p}^\C))\).
\end{definition}

The Hodge star on one-forms is a conformal invariant and hence makes sense on a Riemann surface \(\Sigma = (B^2,c)\). Since it squares to \(-\id_{{T^*\!}B}\), the complexified cotangent bundle \(T^*B\otimes \C\) splits into \(\pm i\)-eigenbundles. The \emph{canonical bundle}, which we denote by \(K_\Sigma\), is the \(+i\)-eigenspace. The vector bundle $\ad_P(\mathfrak{p}^\C) = P\times_{\K^\C} \mathfrak{p}^\C$ is associated to $P$ via the adjoint representation of $\K^\C$ on $\mathfrak{p}^\C$. In a local complex coordinate $z$, the holomorphic sections of $K_\Sigma$ are given by \(f dz\), where \(f\) is a holomorphic function. Thus, in local coordinates, a holomorphic section of \(K_\Sigma\otimes (\ad_P( \mathfrak{p}^\C))\) is given by \(A \, dz\), where \(A\) is a holomorphic map into \(\mathfrak{p}^\C\).

\begin{example}[\(\SL_n(\C)\)-Higgs bundles, \cite{SOpq_Higgs_paper}*{Def.\ 2.4}]
    We claim that an \(\SL_n(\C)\)-Higgs bundle \((P,\Phi)\) can be described equivalently in terms of a \emph{Higgs vector bundle}: a pair \((E,\Phi)\) where \(E\) is a holomorphic vector bundle with \(\Lambda^nE\simeq \mathcal{O}_\Sigma \) and \(\Phi:E\to K_\Sigma\otimes E\) is a trace-free endomorphism twisted by \(K_\Sigma\). 
    
    Indeed, given a holomorphic \(\K^\C= \SL_n(\C)\)-bundle \(P\), we can consider the associated vector bundle \(E=P\times_{\SL_n(\C)}\C^n\) corresponding to 
    the defining representation. Notice that \(\Lambda^nE\simeq \mathcal{O}_\Sigma\) is holomorphically trivial. Conversely, given a holomorphic vector bundle with \(\Lambda^nE\) holomorphically trivial, its bundle of frames preserving a holomorphic volume form is a holomorphic principal \(\SL_n(\C)\)-bundle \(P\). Since the adjoint representation of \(\SL_n(\C)\) is the restriction of the tensor product representation to the subspace \(\mathfrak{sl}_n(\C)\subset \mathfrak{gl}_n(\C)\simeq \C^n\otimes (\C^n)^*\), there is a bundle isomorphism \(P\times_{\Ad}\mathfrak{sl}_n(\C) \simeq \End_0(E)\), where \(\End_0(E)\) is the bundle of trace-free endomorphisms of \(E\). Under this isomoprhism, the Higgs field corresponds to a $K_\Sigma$-twisted endomorphism.  
\end{example}

If \(\G\le \SL_n(\R)\) is linear, then we can still associate a Higgs vector bundle to any \(\G\)-Higgs bundle. We find that Higgs vector bundle are conceptually easier to work with since they are linear objects. Because of this, we will also adopt the linear algebra notation for \(\Phi\) (e.g.\ \(\tr \Phi =0\)).


\begin{example}[\(\SL_2(\R)\)-Higgs bundles, \cite{Hitchin87}*{\S10}]\label{ex:SL_2(R)-Higgs}
For \(\G=\SL_2(\R)\), we have \(\K^\C = \SO(2,\C)\simeq\C^*\). Let  $P \to \Sigma$ be a holomorphic principal $\SO(2,\C)$-bundle. The group $\C^*$ embeds into \(\SL_2(\C)\) by \(a\mapsto \operatorname{diag}(a,a^{-1})\). For this embedding, we have \[\mathfrak{p}^\C = \bigg\{\begin{pmatrix}
    0& b \\ c& 0
\end{pmatrix} :\ b,c\in \C\bigg\}\simeq \C^{\otimes 2}\oplus (\C^*)^{\otimes 2}\ \ \text{as }\ \C^*\text{-modules}.\]
Here, \(\C^*\) acts on \(\C^{\otimes k}\) by the tensor product representation induced from the standard representation of \(\C^*\) on \(\C\) (i.e.\ if we identify \(\C^{\otimes k}\) with \(\C\) as a vector space, then \(a\cdot v = a^k v\) for \(a\in \C^*,\, v\in \C^{\otimes k}\)). 
Then, \[K_\Sigma\otimes(P\times_{\K^\C} \mathfrak{p}^\C)\simeq K_\Sigma L^2 \oplus K_\Sigma L^{-2},\] where \(L\) is the line bundle associated to \(P\) via the standard representation of \(\C^*\) on \(\C\). Thus, an \(\SL_2(\R)\)-Higgs bundle is a triple \((L,\beta,\gamma)\) where \(L\) is a holomorphic line bundle, \(\beta \in H^0(B,K_\Sigma L^2)\), and \(\gamma \in H^0(B,K_\Sigma L^{-2})\). 
\end{example}


Recall that the degree of a complex vector bundle is given by \[\deg E = \frac{-1}{2\pi i}\int_B \tr \Omega \, \in \, \Z,\] where \(\Omega\) is the curvature of any connection on \(E\). An \(\SL_n(\C)\)-Higgs bundle is called \emph{semistable} if the associated vector bundle \(E\) satisfies \[\deg E' \le 0,\] for all \(\Phi\)-invariant subbundles \(E'\subset E\). (Here, \(\Phi\)-invariant means \(\Phi(E')\subset K_\Sigma\otimes E'\).) An \(\SL_n(\C)\)-Higgs bundle is \emph{stable} if the inequality is strict for all \(E'\), and it is \emph{polystable} if it is semistable and it splits as a direct sum of stable bundles (all of degree \(0\)).

There is a more general notion of polystability for \(\G\)-Higgs bundles that makes the non-abelian Hodge correspondence true for general connected real reductive Lie groups; see \cite{GGM}*{Definition~2.9}. However, it turns out that for linear groups, this is equivalent \cite{SOpq_Higgs_paper}*{Proposition~2.10} to polystability for Higgs vector bundles defined above:

\begin{prop}\label{prop_polystable}
Let $\G$ be a linear real reductive Lie group. A \(\G\)-Higgs bundle \((P,\Phi)\) is polystable if and only if the associated Higgs vector bundle is polystable.
\end{prop}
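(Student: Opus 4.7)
The plan is to set up a direct dictionary between the two notions of polystability using the faithful linear representation $\G\hookrightarrow \GL_n(\R)$. First I would recall the general definition from \cite{GGM}*{Def.~2.9}: a $\G$-Higgs bundle $(P,\Phi)$ is polystable when, for every strict parabolic subgroup $P_\chi\le \K^\C$ associated to an antidominant character $\chi$, every holomorphic reduction $\sigma$ of $P$ to $P_\chi$ for which $\Phi$ takes values in $K_B\otimes P_\sigma(\mathfrak{p}^\C_\chi)$ satisfies $\deg P(\sigma,\chi)\ge 0$, with equality only when $\sigma$ further reduces to a Levi subgroup of $P_\chi$.

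Next, I would translate these ingredients to the associated Higgs vector bundle $(E,\Phi)$ via the composition $\K^\C\hookrightarrow \G^\C\hookrightarrow \SL_n(\C)$. A reduction of $P$ to $P_\chi$ induces a holomorphic filtration $0=E_0\subset E_1\subset \cdots \subset E_k=E$, and $\Phi$-compatibility of $\sigma$ is equivalent to each $E_i$ being a $\Phi$-invariant subbundle. A direct weight calculation on the standard representation yields
\[
    \deg P(\sigma,\chi) \;=\; \sum_{i=1}^{k-1} \lambda_i \, \deg E_i,
\]
with coefficients $\lambda_i>0$ determined by the antidominance of $\chi$. In this way, $\Phi$-invariant destabilising subbundles of $E$ and destabilising admissible triples $(P_\chi,\sigma,\chi)$ on the $\G$-side are in quantitative correspondence.

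The main obstacle is the reverse direction of this dictionary: given a $\Phi$-invariant subbundle $E'\subset E$ of non-negative degree, one must exhibit an admissible triple on the $\G$-side producing it. The issue is that the reduction of the associated $\SL_n(\C)$-frame bundle to the stabiliser of $E'$ needs to descend to a reduction of $P$ to a parabolic of $\K^\C$. This is where linearity of $\G$ enters: exploiting that $\K^\C$ is reductive and that antidominant characters of a maximal torus of $\K^\C$ can be chosen to realise any prescribed $\Phi$-invariant filtration of $E$, one can always arrange the descent. The detailed verification is carried out in the proof of \cite{SOpq_Higgs_paper}*{Prop.~2.10}, which I would invoke to avoid redoing the case-by-case analysis. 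The polystable clause then matches up naturally: Levi reductions of $P$ correspond to $\Phi$-respecting orthogonal decompositions $E=\bigoplus E_i$ into stable summands of degree zero, so a polystable $\G$-Higgs bundle produces a polystable Higgs vector bundle and conversely.
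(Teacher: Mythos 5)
The paper does not actually prove \Cref{prop_polystable}: it simply cites \cite{SOpq_Higgs_paper}*{Prop.~2.10} as the source, so there is no in-paper argument for you to have matched. Your sketch lays out a reasonable reading of what such a proof would involve (recalling the \cite{GGM} definition of polystability via antidominant characters and parabolic reductions, translating reductions of $P$ into filtrations of $E$ and matching the degree pairing), and you correctly identify the genuinely hard direction as passing from a $\Phi$-invariant subbundle of $E$ back to an admissible reduction of $P$. But since you, too, ultimately defer the core verification to \cite{SOpq_Higgs_paper}*{Prop.~2.10}, this is less a proof than an annotated pointer to the same citation the paper relies on.

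One concrete point to flag: the assertion that ``antidominant characters of a maximal torus of $\K^\C$ can be chosen to realise any prescribed $\Phi$-invariant filtration of $E$'' is overstated. A general $\Phi$-invariant subbundle of $E$ is stabilised by a parabolic of $\SL_n(\C)$ that need not meet $\K^\C$ in a parabolic subgroup, so it need not arise from any reduction of $P$. For instance, with $\G=\SL_2(\R)$ one has $\K^\C\simeq\C^*$ and $E=L\oplus L^{-1}$; the only $\K^\C$-parabolic reductions give back the summands $L^{\pm1}$, while $E$ has many other line subbundles. The actual mechanism in the cited argument must therefore be subtler --- typically one invokes the canonicity of the Harder--Narasimhan or Jordan--H\"older filtration, which is automatically preserved by the extra $\G$-structure, rather than claiming every filtration descends. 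If you want this sketch to stand on its own, you would need to replace the ``any prescribed filtration'' claim with an argument of that flavour.
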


We then have the following non-abelian Hodge correspondence. This was proved by Hitchin \cite{Hitchin87} and Donaldson \cite{Don87} for \(\SL_2(\C)\), by Simpson \cite{Simp88} and Corlette \cite{Cor88} for complex reductive groups, and by Garc\'ia-Prada--Gothen--Mundet I Riera \cite{GGM} for connected real reductive groups. 

\begin{theorem}[Non-abelian Hodge correspondence]\label{thm:nonabHodge}
   For each conformal structure \(c\) on \(B^2\), there is a homeomorphism \[\mathcal{M}(\Sigma,\G)\simeq \Hom^{\sans{red}}(\pi_1(B),\G)/\G,\] where \(\mathcal{M}(\Sigma,\G)\) is the space of \(\K^\C\)-gauge equivalence classes of polystable \(\G\)-Higgs bundles, $\Sigma = (B,c)$.
\end{theorem}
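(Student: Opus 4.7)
The plan is to set up the correspondence in both directions by passing through harmonic metrics on the flat bundle $E_\rho = \tilde B \times_\rho \G$ (respectively, Hermitian--Einstein metrics on the Higgs bundle), using \Cref{thm_CorDonLab} and the Hitchin--Kobayashi correspondence as the two main analytic inputs. More precisely, to go from $\Hom^{\sans{red}}(\pi_1(B),\G)/\G$ to $\mathcal{M}(B,\G)$, I start with a reductive $\rho$ and apply \Cref{thm_CorDonLab} to produce a $\rho$-equivariant harmonic map $h:\tilde B\to \G/\K$. This $h$ is the same as a $\K$-reduction of the flat $\G$-bundle associated with $\rho$; equivalently, if $D$ denotes the flat connection on $\ad_{E_\rho} \mathfrak{g}^\C$, then relative to this reduction one has the decomposition $D = A + \Psi$, where $A$ is a unitary $\K^\C$-connection and $\Psi \in \Omega^1(B,\ad_{E_\rho}\mathfrak{p}^\C)$ is $h$-self-adjoint. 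Splitting with respect to the complex structure from $c$, I set $\Phi := \Psi^{1,0}$ and take $P$ to be the holomorphic $\K^\C$-bundle determined by $\bar\partial_A$. The harmonicity of $h$ is equivalent to $\bar\partial_A\Phi = 0$, so that $(P,\Phi)$ is a $\G$-Higgs bundle.

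Next I need to show this $(P,\Phi)$ is polystable, via \Cref{prop_polystable}. The key point is that a $\Phi$-invariant holomorphic subbundle $E'\subset E = P\times_{\K^\C}\C^n$ of positive degree would, by Chern--Weil applied to the Chern connection induced by $h$, force $\Phi$ to violate either the harmonic equation or the flatness condition $F_A + [\Phi,\Phi^{*_h}]=0$ which is a direct consequence of $D^2=0$; this is the Higgs-bundle analogue of the classical Kobayashi--L\"ubke argument. Reductiveness of $\rho$ translates into the polystability statement, because a flat $\rho$-invariant subspace of $\C^n$ decomposes the whole package as a direct sum. One then checks that gauge-equivalent harmonic metrics yield $\K^\C$-gauge-equivalent Higgs bundles, so the assignment descends to the quotients.

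For the reverse direction I must solve Hitchin's equations
\[
F_{A} + [\Phi,\Phi^{*_h}] = 0
\]
on a polystable $\G$-Higgs bundle $(P,\Phi)$, producing a Hermitian metric $h$ such that the associated connection $D=A+\Phi+\Phi^{*_h}$ is flat. The holonomy of $D$ is then a reductive representation $\rho$, well-defined up to conjugation, and the harmonic map $h$ is precisely the equivariant map furnished by \Cref{thm_CorDonLab}, so the two constructions are mutually inverse. For $\G = \SL_n(\C)$ this existence theorem is due to Hitchin (rank~2) and Simpson; for general linear reductive $\G$ it is the content of \cite{GGM}, and reduces via $\G\hookrightarrow \GL_n(\C)$ to the complex case combined with an equivariance argument that uses the Cartan involution fixing $\K$.

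I expect the hard part to be the existence of solutions to Hitchin's equations in the polystable case; the analytic heart is a continuity method (or heat flow) combined with a Donaldson-type functional whose minimum detects stability. In comparison, the forward direction (harmonic metric $\Rightarrow$ Higgs bundle) and the check that the maps descend to gauge/conjugation classes are essentially formal once the Cartan decomposition $\mathfrak{g}^\C = \mathfrak{k}^\C \oplus \mathfrak{p}^\C$ is exploited. The continuity of the homeomorphism in both directions follows from standard elliptic estimates for the harmonic-metric equation and smooth dependence on parameters; since both moduli spaces are Hausdorff (after restricting to polystable / reductive objects) and the map is a continuous bijection between them with continuous inverse given by the explicit harmonic-metric construction, one concludes it is a homeomorphism. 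For the application in this paper, only the bijection on sets is needed, so I would develop the analytic side only to the extent required to quote \cite{GGM}*{Thm.~3.32}.
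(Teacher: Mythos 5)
The paper does not actually prove this theorem; it quotes it as an established result, attributed to Hitchin and Donaldson for $\SL_2(\C)$, to Simpson and Corlette for complex reductive groups, and to Garc\'ia-Prada--Gothen--Mundet i Riera \cite{GGM} for connected real reductive groups. Your sketch reproduces the standard strategy behind those references --- Corlette--Donaldson harmonic metrics in one direction, the Hitchin--Kobayashi correspondence (Hitchin's equations) in the other, with the $\K$-reduction and Cartan decomposition translating between harmonic metrics and Higgs data --- so there is no conflict with anything in the paper, and the outline is essentially the one in the literature. Two small imprecisions worth flagging: (i) in the paper's setup $\G$ is fixed as a faithful real representation $\G\le\GL_n(\R)$ closed under transposition, which is then complexified, rather than being embedded directly into $\GL_n(\C)$ as you write; and (ii) the proof in \cite{GGM} of the real reductive case does not proceed by reduction to the complex case via an embedding together with an equivariance argument --- it is carried out intrinsically for $\G$-Higgs pairs through their general Hitchin--Kobayashi theorem --- so your description of that step is a heuristic rather than the actual mechanism. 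Neither of these affects the correctness of the roadmap, and you are right that the analytic heart (existence of Hitchin solutions on polystable bundles, proved by heat flow or continuity method against a Donaldson-type functional) can only be quoted at this level; the paper simply quotes the whole theorem.
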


We note that this correspondence is $\Aut(\Sigma)$-equivariant. Here, $\Aut(\Sigma)$ acts on $\mathcal{M}(\Sigma,\G)$ by pull-back, and on $\Hom^{\sans{red}}(\pi_1(B),\G)/\G$ by pre-composition with the induced action on the fundamental group.

In order to explain the gauge-invariant condition on Higgs bundles that detects conformality of \(\rho\)-equivariant harmonic maps, we describe briefly how a reductive representation \(\rho\in\Hom^{\sans{red}}(\pi_1(B),\G)\) gives rise to a Higgs bundle.

Given \(\rho\in \Hom^{\sans{red}}(\pi_1(B),\G)\), let \(h: \tilde \Sigma \to (\G/\K , g_{\sans{sym}})\) be a \(\rho\)-equivariant harmonic map (which exists by Theorem~\ref{thm_CorDonLab}).
Let \(E_\R=\tilde \Sigma\times_\rho \R^n\) be the flat vector bundle associated to \(\rho\) (recall that \(\rho(\pi_1(B))\le \G\le \SL_n(\R)\)), and \(E=E_\R\otimes \C\) its complexification. Then \(h:\tilde \Sigma\to \G/\K\subset \SL_n(\R)/\SO(n)\) defines a bundle metric on \(E_\R\), and its differential \(dh\) defines a section \(h^{-1}dh\in\Omega^1(\Sigma,\End_0(E_\R))\). The Higgs field \(\Phi\) is the \((1,0)\)-part of the complex-linear extension \(h^{-1}dh^\C \in \Omega^1_\C(\Sigma,\End_0(E))\). Since \(h\) is harmonic, the bundle \(E\) admits a holomorphic structure such that \(\Phi\) is a holomoprhic section of \(K_\Sigma\otimes \End(E)\) (cf.\ \cite{SOpq_Higgs_paper}*{Appendix~A}). From this, we obtain the following lemma which is well-known to experts.

\begin{lemma}\label{lem_hopfdif}
Let \(\rho\in \Hom^{\sans{red}}(\pi_1(B),\G)\) be a reductive representation, \(h:\tilde \Sigma \to (\G/\K,g_{\sans{sym}})\) a \(\rho\)-equivariant harmonic map. If \((P,\Phi)\) is a \(\G\)-Higgs bundle representing the gauge equivalence class corresponding to \(\rho\) under the non-abelian Hodge correspondence, then \(\tr \Phi^2=0\in H^0(\Sigma,K_\Sigma^2)\) if and only if \(h\) is weakly conformal.

\begin{proof}
Recall that the complexified cotangent bundle can be decomposed as \(T^* \Sigma^\C = T^* \Sigma \otimes \C=K_\Sigma\oplus \bar{K}_\Sigma\), where \(K_\Sigma\) is the canonical bundle. There is a corresponding decomposition of \(\Sym^2(T^*\Sigma)^\C\) as \(K_\Sigma^2\oplus K_\Sigma\bar{K}_\Sigma\oplus \bar{K}_\Sigma^2\).
Given a real symmetric tensor \(\alpha\in \Sym^2(T^*B)\), let \(\alpha^\C = \alpha^{2,0}+\alpha^{1,1}+\alpha^{0,2}\) be the decomposition of its complex-linear extension into these components. Then, \(\alpha\) is conformal to \(\check{g}\in c\) if and only if \(\alpha=\alpha^{1,1}\). Since \(\alpha\) is real, \(\overline{\alpha^{2,0}} = \alpha^{0,2}\), hence  \(\alpha=\alpha^{1,1}\) if and only if \(\alpha^{2,0}=0\).

Thus, the lemma will follow if we show that \(\tr \Phi^2= (h^*g_{\sans{sym}}^\C)^{2,0}\). But the complexification of \(h^{-1}dh\) is \(h^{-1}dh^\C =\Phi +\Phi^*\), so the decomposition of \(h^*g_{\sans{sym}}^\C\) is \[h^*g_{\sans{sym}}^\C = \tr \Phi^2 +\tr \Phi\Phi^* +\tr (\Phi^*)^2,\] since \(g_{\sans{sym}}(A,A) = \tr A^2\) is the restriction of the Killing form. This proves the claim.
\end{proof}
\end{lemma}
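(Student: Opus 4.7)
The plan is to identify $\tr\Phi^2$ with the $(2,0)$-component of the pulled-back symmetric metric $h^*g_{\mathsf{sym}}$ (the classical Hopf differential of $h$), and to observe that weak conformality on a Riemann surface is exactly the vanishing of this $(2,0)$-component.

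First, I would recall the type decomposition on the Riemann surface $(B^2,c)$. The complex structure gives $T^*\!B\otimes\C = K_B\oplus \bar K_B$, and correspondingly
\[
\Sym^2(T^*\!B)\otimes\C \;=\; K_B^2\;\oplus\; (K_B\otimes\bar K_B)\;\oplus\;\bar K_B^2.
\]
A real symmetric $(0,2)$-tensor $\alpha$ lies in the middle summand (i.e.\ is conformal to $\check g\in c$ away from its zero set, with a non-negative factor) if and only if its $(2,0)$-part $\alpha^{2,0}\in H^0(B,K_B^2\otimes\C)$ vanishes; reality forces $\overline{\alpha^{2,0}}=\alpha^{0,2}$, so vanishing of one implies vanishing of the other. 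Applied to $\alpha=h^*g_{\mathsf{sym}}$, this reduces the lemma to proving $(h^*g_{\mathsf{sym}})^{2,0}=\tr\Phi^2$.

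Second, I would recall the construction of $\Phi$ from $h$. The derivative $h^{-1}dh$ is a real $\mathfrak{p}$-valued one-form on $B$, and under the faithful embedding $\G\hookrightarrow \GL_n(\R)$ it is a section of $T^*\!B\otimes \End_0(E_\R)$. Complexifying and type-decomposing,
\[
h^{-1}dh^{\C} \;=\; \Phi + \Phi^*,
\]
where $\Phi$ is the $(1,0)$-piece (valued in $\mathfrak{p}^{\C}$) and $\Phi^*$ the $(0,1)$-piece, with harmonicity of $h$ guaranteeing that $\Phi$ is holomorphic after equipping $E$ with the Chern connection of the bundle metric defined by $h$. The restriction of $g_{\mathsf{sym}}$ to $\mathfrak{p}$ is (a positive multiple of) the trace form $(A,B)\mapsto \tr(AB)$, since $g_{\mathsf{sym}}$ is induced by the Killing form on $\mathfrak{g}$ restricted to $\mathfrak{p}$, and in the matrix realisation this becomes the trace form up to a universal constant. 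Therefore $h^*g_{\mathsf{sym}} = \tr\bigl((h^{-1}dh)^2\bigr)$, and complexifying yields
\[
h^*g_{\mathsf{sym}}^{\C} \;=\; \tr\Phi^2 \;+\; 2\tr(\Phi\Phi^*) \;+\; \tr(\Phi^*)^2.
\]
The three terms are sections of $K_B^2$, $K_B\otimes\bar K_B$, and $\bar K_B^2$ respectively, so reading off the $(2,0)$-component gives $(h^*g_{\mathsf{sym}})^{2,0}=\tr\Phi^2$.

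Combining the two steps, $h$ is weakly conformal if and only if $\tr\Phi^2=0$. Holomorphicity of $\Phi$ ensures $\tr\Phi^2\in H^0(B,K_B^2)$, and gauge-invariance of this quadratic differential makes the statement intrinsic to the equivalence class of $(P,\Phi)$. The main technical point, and the only place where I anticipate any friction, is pinning down the constant of proportionality between $g_{\mathsf{sym}}|_{\mathfrak{p}}$ and the trace form $\tr(AB)$ in the chosen faithful representation $\G\le \GL_n(\R)$: this amounts to checking the conventions under which the non-abelian Hodge correspondence encodes $h^{-1}dh$ as $\Phi+\Phi^*$, but since both sides of the desired identity $(h^*g_{\mathsf{sym}})^{2,0}=\tr\Phi^2$ rescale by the same constant, the vanishing statement is unaffected.
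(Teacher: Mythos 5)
Your proof is correct and follows essentially the same route as the paper's: decompose $\Sym^2(T^*\!B)^\C$ by type, reduce weak conformality to the vanishing of the $(2,0)$-part, and identify that part with $\tr\Phi^2$ using $h^{-1}dh^\C = \Phi + \Phi^*$ and the trace-form realisation of $g_{\sans{sym}}$. Your extra remark about the universal constant between $g_{\sans{sym}}|_{\mathfrak p}$ and the trace form (and why it doesn't affect the vanishing) is a sensible clarification not spelled out in the paper, and your factor of $2$ on the cross term $\tr(\Phi\Phi^*)$ is the more careful bookkeeping.
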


\begin{proof}[Proof of \Cref{main:tHE-Higgs}]
Fix a conformal class \(c\) in \(B^2\) and set $\Sigma = (B,c)$. Notice that the map \((P,\Phi)\mapsto \tr \Phi^2\) is gauge invariant, so it makes sense to define the closed subset \[\mathcal{N}=\{[(P,\Phi)]:\tr \Phi^2 =0\}\subset \mathcal{M}(\Sigma,\G).\]
Notice also that  $\Aut(B^2,c)$ acts on $\mathcal{N}$ by pull-back. We also let $\mathcal{HE}$ denote the set of solutions $(g_B, \rho, h)$  to the twisted harmonic-Einstein equations with \(g_B\in c\), up to equivalence (\Cref{def:tHE_equivalence}). 

We first define a map $\mathcal{N} \to \mathcal{HE}$ which is invariant under the action of $\Aut(B^2,c)$ on $\mathcal{N}$. 
By \Cref{thm:nonabHodge}, a given $[(P,\Phi)] \in \mathcal{N}$ corresponds to some \(\rho\in \Hom^{\sans{red}}(\pi_1(B),\G)\) which is well-defined only up to conjugation by  $\varphi \in \G$. By \Cref{thm_CorDonLab}, there exists a $\rho$-equivariant harmonic map \(h:(\tilde{B},c)\to (\G/\K,g_{\sans{sym}})\), which is weakly conformal thanks to \Cref{lem_hopfdif}. \Cref{Thm:BaseEqn} yields the existence of a unique metric \(g_B\in c\) such that \eqref{Eq:HarmonicEinsteinEquations(B)} holds, and hence \((g_B,\rho,h) \in \mathcal{HE}\). If we had a different representative $C_\varphi \circ \rho \in \Hom^{\sans{red}}(\pi_1(B),\G)$, $\varphi \in \G$, then the corresponding triple would clearly be $(g_B, C_\varphi \circ \rho, \varphi \cdot h) \sim (g_B, \rho, h)$. On the other hand, given  $\check f \in \Aut(B^2,c)$, then $\check f^* [(P, \Phi)]$ corresponds to $\rho \circ \check f_* \in \Hom^{\sans{red}}(\pi_1(B),\G)$. The triple associated to $\rho \circ \check f_* $ is clearly $(\check f^* g_B, \rho \circ \check f_*,  h \circ \check f)$, which is also equivalent to $(g_B, \rho, h)$.

Conversely, if \((g_B,\rho,h) \in \mathcal{HE}\),
then Theorem~\ref{thm_CorDonLab} implies \(\rho\in \Hom(\pi_1(B),\G)\) is reductive, so the non-abelian Hodge correspondence gives \([(P,\Phi)]\in \mathcal{M}(B,\G)\). Since \(h\) is weakly conformal by \eqref{Eq:HarmonicEinsteinEquations(B)}, Lemma~\ref{lem_hopfdif} yields \(\tr \Phi^2 =0\), so \([(P,\Phi)]\in \mathcal{N}\). Finally, if \((g_B',\rho',h') = (\check f^* g_B,C_\varphi \circ \rho \circ \check f_*,\varphi\cdot h \circ \check f ) \sim (g_B, \rho ,h)\), then \(\rho' =C_\varphi \circ \rho \circ \check f_*\) defines the same conjugacy class as \(\rho\) (up to pull-back), so \((g'_B,\rho',h')\) defines the same equivalence class \( \check f^* [(P,\Phi)]\in \mathcal{N}\) as \((g_B,\rho ,h)\).
\end{proof}

\subsection{Solitons over Riemann surfaces: Proof of \Cref{maincor:isom_of_sols}}\label{sec:solitons_higgs}

In this subsection we briefly return to considering Ricci solitons constructed using Ansatz~\ref{Ansatz} via Proposition~\ref{prop_AnsatthenHE}. Our main aim is to prove Corollary~\ref{maincor:isom_of_sols}. We also prove a necessary condition for solitons satisfying Ansatz~\ref{Ansatz} to be homogeneous. 

Let \(\Sigma=(B^2,c)\) be a Riemann surface, with \(\chi(B^2)<0\). Combining Theorem~\ref{main:tHE-Higgs} with Proposition~\ref{prop_AnsatthenHE}, we can associate a Ricci soliton to a polystable Higgs bundle. Notice that we must make several choices for this association, but different choices lead to isometric solitons. Moreover, changing the Higgs bundle by a gauge transformation also does not change the isometry type of the soliton.

 Theorem~\ref{main:tHE-Higgs} has the following drawback for producing examples of solitons from Ansatz~\ref{Ansatz}: if \(\G\) is the reductive group associated to a nilsoliton \((\N,\bar h)\), then we must apply Theorem~\ref{main:tHE-Higgs} to Higgs bundles with \(\G^0\)-Higgs bundles since the non-abelian Hodge correspondence (cf.\ Theorem~\ref{thm:nonabHodge}) is currently only known for connected groups. Hence, we only produce solitons \((M,g)\) such that \(\rho(\pi_1(B))\le \G^0\), where \(\rho\) is the twisting homomorphism of \(\pi:M\to B\). However, this is not a major drawback since we do produce everything up to taking a finite cover. Indeed, the cover \(\hat B\to B\) such that \(\pi_1(\hat{B})=\ker \bar \rho\), where \(\bar \rho\) is the composition of \(\rho\) with the projection \(\G\to \G/\G^0\), is finite and the twisting homomorphism of the pullback of \(M\) to \(\hat B\) maps into \(\G^0\). (The cover is finite since the deck transformations inject into the finite group \(\G/\G^0 = \K/\K^0\).) 
 
By Theorem~\ref{main:tHE-Higgs}, gauge equivalence classes of \(\G^0\)-Higgs bundles are in one-to-one correspondence with solutions of the twisted harmonic-Einstein equation for the pair \((B^2,\G^0/\K^0)\) up to equivalence. Moreover, solutions of the twisted harmonic-Einstein equation for the pair \((B^2,\G/\K)\) are in one-to-one correspondence with solitons by \Cref{prop:equiv_iff_isom}. Hence, to prove Corollary~\ref{maincor:isom_of_sols} it suffices to show the following:

\begin{prop}
    Let \(\Sigma=(B^2,c)\) be a Riemann surface, \(\G\) a reductive group, and \(\G^0\le \G\) its identity component. Moreover, let \(\mathcal{HE}\) (resp.\ \(\mathcal{HE}_0\)) denote the set of solutions $(g_B, \rho, h)$ to the twisted harmonic-Einstein equations, up to equivalence, for the pair \((B^2,\G/\K)\) (resp.\ \((B^2,\G^0/\K^0)\)) with \(g_B\in c\) and \(\rho(\pi_1(B))\le \G^0\). Then, the map \(\mathcal{HE}_0\to \mathcal{HE}\) is finite-to-one.
\begin{proof}
    Let $(g_B, \rho, h)$ be a solution to the twisted harmonic-Einstein equation with \(g_B\in c\) and \(\rho(\pi_1(B))\le \G^0\). Up to pulling back by \(\check{f}\in \Aut(\Sigma)\), any solution \((g_B',\rho',h')\) that is equivalent to $(g_B, \rho, h)$ for \((B^2,\G/\K)\) is of the form $(g_B, C_\varphi\circ\rho,\varphi\cdot h)= (g_B,\rho',h')$ for \(\varphi\in \G\). Notice that this equivalence class is a union of equivalence classes for the pair \((B^2,\G^0/\K^0)\). Since the map \(\mathcal{HE}_0\to \mathcal{HE}\) sends the \(\G^0\)-equivalence class of \((g_B, \rho, h)\) in \(\mathcal{HE}_0\) to the \(\G\)-equivalence class of \((g_B, \rho, h)\) in \(\mathcal{HE}\), the proposition will follow if we show that this union is finite.
    
    Let \(\{\varphi_0=e,\varphi_1,\ldots,\varphi_N\}\subset \G\) be representatives for the \(\G^0\)-cosets in \(\G\). Any $(g_B, C_\varphi\circ\rho,\varphi\cdot h)= (g_B,\rho',h')$ is in the \(\G^0\)-equivalence class of $(g_B, C_{\varphi_i}\circ\rho,\varphi_i\cdot h)$ for some \(i\in \{0,\ldots,N\}\). That is, the number of \(\G^0\)-equivalence classes in any \(\G\)-equivalence class is at most \(N\). Thus, the proposition follows.
\end{proof}
\end{prop}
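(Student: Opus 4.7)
The plan is a direct counting argument based on the finiteness of the component group $\G/\G^0$. First I would fix a solution $(g_B,\rho,h) \in \mathcal{HE}_0$ and describe explicitly the fibre of the map $\mathcal{HE}_0 \to \mathcal{HE}$ through its class. By \Cref{def:tHE_equivalence}, any triple $(g_B',\rho',h')$ that is $\G$-equivalent to $(g_B,\rho,h)$ has the form
\[
  g_B' = \check f^* g_B, \qquad \rho' = C_\varphi \circ \rho \circ \check f_*, \qquad h' = \varphi \cdot h \circ \check f,
\]
for some $\check f \in \Diff(B)$ and $\varphi \in \G$. Because the assumption $\rho(\pi_1(B))\le \G^0$ and $\rho'(\pi_1(B))\le \G^0$ is preserved by $\check f_*$, and because $\check f$ may also be absorbed when passing to classes, I may harmlessly restrict attention to triples of the form $(g_B, C_\varphi\circ \rho, \varphi \cdot h)$ with $\varphi \in \G$.

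Next I would use the Cartan decomposition $\G = \K \exp(\mathfrak p)$ implicit in the setup of the non-abelian Hodge correspondence (cf.~the proof of \Cref{thm:nonabHodge} in the linear case) to conclude that $\G/\G^0 \cong \K/\K^0$ is a finite group. This is a standard fact for a (linear) real reductive Lie group, and the article uses it in the preceding paragraph when observing that the cover $\hat B \to B$ killing the disconnected part is finite. Fix once and for all representatives $\{\varphi_0 = e, \varphi_1, \dots, \varphi_N\} \subset \G$ of the cosets of $\G^0$ in $\G$.

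Given $\varphi \in \G$, write $\varphi = \varphi_i \cdot \varphi^0$ with $\varphi^0 \in \G^0$ and $0 \le i \le N$. Then $(g_B, C_\varphi \circ \rho, \varphi \cdot h)$ and $(g_B, C_{\varphi_i}\circ \rho, \varphi_i \cdot h)$ are related by conjugation/action by $\varphi^0 \in \G^0$, so they represent the \emph{same} class in $\mathcal{HE}_0$. Thus every $\G$-equivalence class splits into at most $N+1$ many $\G^0$-equivalence classes, proving the finite-to-one assertion.

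The main (very minor) obstacle to check is just the compatibility of this coset counting with the $\Diff(B)$ action in the equivalence relation: one needs that the choice of $\check f$ does not generate infinitely many new $\G^0$-classes inside a single $\G$-class. But this is automatic because the $\check f$ component of an equivalence may be applied first (producing a triple in the same $\G^0$-class by definition), after which only the finite ambiguity in $\varphi \bmod \G^0$ remains. This completes the argument.
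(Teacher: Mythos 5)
Your proposal is correct and follows essentially the same route as the paper's proof: absorb the diffeomorphism into the equivalence classes, reduce to the action of a single $\varphi\in\G$, and then count $\G^0$-cosets using the finiteness of $\G/\G^0$. The only substantive addition you make is to spell out why $\G/\G^0 \cong \K/\K^0$ is finite (via the Cartan decomposition) and to note implicitly that $\G^0$ is normal so that $\varphi_i\varphi^0\varphi_i^{-1}\in\G^0$; the paper leaves both of these standard facts unstated.
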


Finally, we prove the following necessary condition for a soltion \((M,g)\) that satisfies Ansatz~\ref{Ansatz} to be homogeneous. We will use the contrapositive of this in \S4 to prove existence of inhomogeneous solitons coming from Ansatz~\ref{Ansatz}.

\begin{lemma}\label{lem_suff_for_homogeneity}
If a soliton \((M,g)\) satisfying Ansatz~\ref{Ansatz} is locally homogeneous, then \((B,g_B)\) is also locally homogeneous.
\begin{proof}
    Since \(\mathcal{H}\) is flat and \(\rho(\pi_1(B))\le \Aut(\sans{N})\), there is a totally geodesic section \(\sigma:(B,g_B)\to (M,g)\). Thus, local homogeneity of \((M,g)\) implies local homogeneity of \((B,g_B)\) since a totally geodesic submanifold of a locally homogeneous space is itself locally homogeneous. 
\end{proof}
\end{lemma}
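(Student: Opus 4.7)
My plan is to follow the two-step outline suggested by the statement: first construct a totally geodesic isometric embedding $\sigma : (B, g_B) \hookrightarrow (M, g)$ using the flat structure provided by \Cref{Ansatz}, and then invoke the principle that totally geodesic submanifolds of locally homogeneous Riemannian manifolds are themselves locally homogeneous.

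For the first step, flatness of $\hca$ combined with \Cref{prop_structure_rho} identifies $M \simeq \tilde B \times_\rho \N$, with $\hca$ coming from the trivial product connection on $\tilde B \times \N$. Because $\rho(\pi_1(B)) \le \Aut(\N)$ and every Lie group automorphism of $\N$ fixes the identity $e \in \N$, the constant map $\tilde b \mapsto (\tilde b, e)$ is $\pi_1(B)$-equivariant and descends to a smooth section $\sigma : B \to M$ of $\pi$, whose image is horizontal, i.e.\ $T_{\sigma(b)}\sigma(B) = \hca_{\sigma(b)}$. Since $\pi$ is a Riemannian submersion, $d\pi|_\hca$ is fibrewise an isometry, so $\sigma$ is an isometric embedding. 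Total geodesy then follows from O'Neill's formula: for horizontal lifts $\bar X, \bar Y$, the vertical part of $\nabla^g_{\bar X}\bar Y$ equals $\tfrac12 \vca [\bar X,\bar Y] = A_X Y$, and integrability of $\hca$ forces $A \equiv 0$.

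For the second step, it suffices to invoke the fact that any totally geodesic submanifold $\Sigma$ of a locally homogeneous Riemannian manifold is itself locally homogeneous. Applying this to $\Sigma = \sigma(B)$ and using that $\sigma : (B, g_B) \to (\sigma(B), g|_{\sigma(B)})$ is an isometry yields local homogeneity of $(B, g_B)$.

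The main obstacle I anticipate is the rigorous justification of the second step, since local isometries of $(M,g)$ need not preserve $\sigma(B)$ setwise and so the naive restriction argument fails. One clean route is via the curvature-homogeneity tower: local homogeneity of $(M,g)$ forces $\nabla^k R^M$ to have constant pointwise algebraic type for every $k$, and total geodesy of $\Sigma$ yields $\nabla^k R^\Sigma = (\nabla^k R^M)|_{T\Sigma}$, which inherits the same property; a Singer-type theorem then produces enough local isometries of $\Sigma$. Alternatively, in the \Cref{Ansatz} setting one can exploit that $\vca$ is the tangent bundle to the orbit foliation of a local isometric $\N$-action by a nilpotent Lie group whose leaves are nilsolitons with a distinguished Ricci profile; local isometries of $(M,g)$ must then preserve $\vca$ and thus also $\hca = \vca^\perp$, permuting the horizontal leaves, each of which projects isometrically onto $(B, g_B)$.
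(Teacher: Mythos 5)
Your proposal follows the same two-step route as the paper's proof: construct the totally geodesic horizontal section $\sigma(B)$ from flatness of $\hca$ and the fact that $\rho$ takes values in $\Aut(\N)$ (which fixes the identity), and then invoke that a totally geodesic submanifold of a locally homogeneous manifold is itself locally homogeneous. Your elaboration of the first step (the zero section descends because $\rho$ fixes $e \in \N$, the image is horizontal, $d\pi|_{\hca}$ is a linear isometry, and integrability of $\hca$ forces the O'Neill tensor $A$ to vanish so horizontal leaves are totally geodesic) is exactly the unpacking of the paper's first sentence, and is correct.

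Where you part ways with the paper is that you flag the second step as an "obstacle," and your diagnosis (local isometries of $M$ need not preserve $\sigma(B)$ setwise) is accurate as far as it goes, but the conclusion you draw from it — that the "naive argument fails" — is not. The fact the paper uses is genuinely true, and the clean way to see it is not via curvature-homogeneity or preservation of $\vca$, but via Killing fields: if $K$ is a local Killing field of $(M,g)$ defined near $p \in \Sigma := \sigma(B)$ and $K^{\top}$ denotes the $T\Sigma$-component of $K|_{\Sigma}$, then because $\Sigma$ is totally geodesic one has, for $X, Y \in T\Sigma$, $\nabla^M_X K^\top = \nabla^\Sigma_X K^\top$ and $\nabla^M_X K^\perp \perp T\Sigma$; consequently $(\mathcal{L}_{K^\top} g_\Sigma)(X,Y) = (\mathcal{L}_K g)(X,Y) = 0$. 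Thus $K^\top$ is a Killing field on $\Sigma$ with $K^\top(p) = K(p)$ whenever $K(p) \in T_p\Sigma$. Local homogeneity of $M$ gives surjectivity of the evaluation of local Killing fields at $p$, hence the same for $\Sigma$, so $\Sigma$ is locally homogeneous. Note that your first proposed fix (constant algebraic type of $\nabla^k R^M|_{T\Sigma}$) inherits exactly the problem you flagged: the linear isometries $T_pM \to T_qM$ witnessing constant type need not carry $T_p\Sigma$ to $T_q\Sigma$, so constant type of the restriction does not follow. Your second fix (local isometries preserve $\vca$) is viable here — one can argue $\hca$ is the $-\tfrac12$-eigendistribution of the Ricci endomorphism $\RicE{g} = -\tfrac12\,\mathrm{Id} - D_M$, using that $D_\vca$ is positive definite by \Cref{rmk_GIT}\ref{item_beta+>0} — but it is specific to \Cref{Ansatz}, whereas the Killing-projection argument shows the paper's cited fact holds in full generality, which is how the paper intends it to be read.
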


\section{Low-dimensional solitons}\label{sec:4D}

\subsection{Dimension 4}
In this section, we apply the results of \S\ref{sec_riemsurf} to give a complete description of the Ricci solitons arising as limits of four-dimensional immortal Ricci flows. It turns out that the only non-trivial examples (i.e.\ not Einstein or locally homogeneous) have \(\dim B=\dim \N=2\); see \cite{Lott10}*{Prop.~4.80}.

In order to do this, we use an explicit description of \(\SL_2(\R)\)-Higgs bundles due to Hitchin \cite{Hitchin87}. 
Fix a Riemann surface \(\Sigma=(B^2,c)\) with \(\chi(B^2)<0\) and let \((L,\beta,\gamma)\) be an \(\SL_2(\R)\)-Higgs bundle (cf.\ Example~\ref{ex:SL_2(R)-Higgs}). For each \(d\in \Z\), we define \(\mathcal{N}_d\subset \mathcal{N}\) to be the subset of equivalence classes of polystable \(\SL_2(\R)\)-Higgs bundles with \(\tr\Phi^2=0\) and \(\deg L=d\). If \((L,\beta,\gamma)\) is polystable, then \(|d|<g-1\) (cf.\ the proof of \cite{Hitchin87}*{Prop.\ 10.2}), so we assume this from now on. Moreover, there is an \(\det^{-1}(\pm 1)\)-gauge transformation (see \cite{Collier20}*{\S 3.2}) that gives an isomorphism of \((L,\beta,\gamma)\) with \((L^{-1},\gamma,\beta)\), so we may assume \(d\ge 0\).

The Higgs vector bundle corresponding to \((L,\beta,\gamma)\) is \(E=L\oplus L^{-1}\) with the Higgs field \[\Phi =\begin{pmatrix}
    0& \beta \\\gamma &0
\end{pmatrix}.\]
The condition \(0=\tr \Phi^2 = \beta\gamma+\gamma\beta\) implies that either \(\beta=0\) or \(\gamma=0\). If \(d>0\), polystability implies \(\gamma\neq 0\) since \(L\) would be \(\Phi\)-invariant if \(\gamma=0\). Hence, \(\beta=0\) in this case. From Hitchin's desciption \cite{Hitchin87}*{Prop.\ 10.2} of the moduli space of \(\SL_2(\R)\)-Higgs bundles, we obtain the following description of \(\mathcal{N}_d\), \(0<d\le g-1\).

\begin{prop}[Hitchin, \cite{Hitchin87}]
For a fixed Riemann surface \(\Sigma=(B^2,c)\) and an integer \(d\in (0,g-1]\), the set \(\mathcal{N}_d\) is a \(2^{2g}\)-fold covering of \(S^{r}B\), where \(r=2g-2-2d\). 
\end{prop}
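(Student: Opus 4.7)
The plan is to exhibit an explicit $2^{2g}$-to-$1$ covering map $\mathcal{N}_d \to S^r B$ via the divisor-of-zeros construction, and then identify the covering structure with the pull-back of the multiplication-by-$(-2)$ map on the Jacobian.

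First, I would use the preparatory analysis in the text: since $d>0$, polystability forces $\beta=0$, so a point of $\mathcal{N}_d$ is represented by a pair $(L,\gamma)$ with $L$ a holomorphic line bundle of degree $d$ and $0\neq \gamma\in H^0(B,K_BL^{-2})$, taken modulo the $\C^*$-gauge action, which rescales $\gamma$ by a nonzero scalar (and preserves $L$ as a holomorphic bundle). The natural candidate for the covering map is
\begin{equation*}
    \Psi:\mathcal{N}_d\longrightarrow S^rB, \qquad [(L,0,\gamma)]\longmapsto \mathrm{div}(\gamma),
\end{equation*}
which is well-defined since the divisor is invariant under the $\C^*$-action and $\deg K_BL^{-2}=2g-2-2d=r$.

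Next, I would analyse the fibres. Given $D\in S^rB$, a pre-image consists of $(L,[\gamma])$ with $\mathcal{O}(D)\cong K_BL^{-2}$ and $\mathrm{div}(\gamma)=D$. Once $L$ is fixed, the section $\gamma$ is the canonical section of $\mathcal{O}(D)\cong K_BL^{-2}$, which is determined up to scalar; hence the fibre over $D$ is in bijection with
\begin{equation*}
    \{L\in\mathrm{Pic}^d(B):L^2\cong K_B\otimes\mathcal{O}(-D)\}.
\end{equation*}
This is a torsor over the $2$-torsion subgroup $\mathrm{Jac}(B)[2]\cong(\Z/2\Z)^{2g}$, which has exactly $2^{2g}$ elements, giving the correct fibre cardinality.

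To promote this to a covering map rather than just a fibrewise bijection, I would factor $\Psi$ through the fibre product
\begin{equation*}
    X:=\mathrm{Pic}^d(B)\times_{\mathrm{Pic}^r(B)} S^rB,
\end{equation*}
where the map $\mathrm{Pic}^d(B)\to \mathrm{Pic}^r(B)$ sends $L\mapsto K_BL^{-2}$, and the map $S^rB\to \mathrm{Pic}^r(B)$ is the Abel--Jacobi map. The map $L\mapsto K_BL^{-2}$ is a translated version of the multiplication-by-$(-2)$ isogeny on $\mathrm{Jac}(B)$, and hence is an étale cover of degree $2^{2g}$. Its pull-back $X\to S^rB$ is therefore also an étale $2^{2g}$-to-$1$ cover. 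The map $\mathcal{N}_d\to X$ sending $[(L,0,\gamma)]\mapsto (L,\mathrm{div}(\gamma))$ is a homeomorphism (with inverse assigning to $(L,D)$ the canonical section of $K_BL^{-2}\cong \mathcal{O}(D)$), which yields the claim.

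The only potential obstacle is the last step: verifying that $\mathcal{N}_d\to X$ is an isomorphism of analytic spaces (so that the covering property transports over), rather than merely a set-theoretic bijection. This reduces to the standard fact that nonzero sections of a line bundle are determined up to scalar by their divisor; combined with a dimension count ($\dim \mathcal{N}_d = g + h^0(K_BL^{-2}) - 1 = r$ for generic $L$, matching $\dim S^rB = r$), one obtains the covering structure on the regular locus, and the torsor description of the fibres shows it extends globally.
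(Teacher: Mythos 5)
Your proof is correct, and it essentially reconstructs the argument of Hitchin's \cite{Hitchin87}*{\S 10} which the paper cites without reproof: the divisor-of-zeros map, together with the observation that the set of square roots of $K_B\otimes\mathcal{O}(-D)$ in $\mathrm{Pic}^d(B)$ is a torsor over $\mathrm{Jac}(B)[2]\cong(\Z/2\Z)^{2g}$. The fibre-product reformulation with the translated multiplication-by-$(-2)$ isogeny is a clean way to see the covering-space structure directly, rather than just fibrewise cardinality; it buys you the topological statement for free, since base change of an \'etale map is \'etale.

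One small criticism: the closing paragraph is superfluous, and the dimension count in it is not quite right as stated. Riemann--Roch gives $h^0(K_BL^{-2})=r-g+1$ only when $h^0(L^2)=0$, which holds for generic $L\in\mathrm{Pic}^d(B)$ only in the range $2d\le g-1$; moreover, the relevant $L$ (those for which a nonzero $\gamma$ exists, equivalently $h^0(K_BL^{-2})\ge 1$) need not be generic when $r<g$. None of this matters, because the fibre-product argument already does everything: $X\to S^rB$ is \'etale of degree $2^{2g}$ by base change, and $\mathcal{N}_d\to X$, $[(L,0,\gamma)]\mapsto(L,\mathrm{div}\,\gamma)$, is a bijection given in both directions by holomorphic formulas, hence a homeomorphism onto $X$. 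You could simply delete the last paragraph.
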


Here, \(S^rB\) is the \(r\)-th symmetric product of \(B\). Recall that points \(D\in S^r B\) can be thought of as formal sums \(D=n_1b_1+\cdots +n_k b_k\) with \(b_i\in B\) for \(i=1,\ldots, k\) and \(r=n_1+\cdots+n_k\). The Higgs bundle \((L,0,\gamma)\) lies above the zero divisor of \(\gamma\); that is, the formal sum determined by the points \(b_i\in B\) such that \(\gamma_{b_i}=0\), weighted by the order to which \(\gamma\) vanishes. In particular,  the real dimension of \(S^rB\) is \(2r\).

For \(d=g-1\), \(\mathcal{N}_d\) is a finite set of order \(2^{2g}\). The corresponding representations \(\rho:\pi_1(B)\to \SL_2(\R)\) are the \(2^{2g}\)-possible lifts of the uniformising representation \(\pi_1(B)\to \sans{PSL}_2(\R)\), and the corresponding solitons are homogeneous (cf.\ Example~\ref{ex:uniformisingrep}). 

If \(d=0\), the condition \(\tr \Phi^2 =0\) forces \(\Phi =0\). Indeed, if \(\beta =0\), then \(L^{-1}\) is a \(\Phi\)-invariant sub-bundle with \(\deg L^{-1}=0\), so \(E=L\oplus L^{-1}\) must split as a sum of Higgs line bundles of degree \(0\). The only way this is possible is if \(\gamma=0\). Similarly, \(\gamma=0\) implies \(\beta=0\). Thus, if \(\rho:\pi_1(B)\to \SL_2(\R)\) corresponds to \((L,\beta,\gamma)=(L,0,0)\in \mathcal{N}_0\), then \(\rho(\pi_1(B))\le \Or(2)\) (see, for example, \cite{Collier20}*{Prop.\ 2.17}) and \(\rho\)-equivariant harmonic maps are constant. In particular, the resulting solitons are locally homogeneous.

If \(0<d<g-1\), then the corresponding solitons are not locally homogeneous:

\begin{lemma}
    If \(0<d<g-1\), then a soliton \((M,g)\) associated to an \(\SL_2(\R)\)-Higgs bundle \((L,0,\gamma)\) with \(\deg L=d\) is not locally homogeneous. 
    \begin{proof}
         Assume for a contradiction that \((M,g)\) is locally homogeneous. Then, \((B^2,g_B)\) is locally homogeneous by Lemma~\ref{lem_suff_for_homogeneity}, and hence has constant Gauss curvature since \(\dim B=2\). By \eqref{Eq:HarmonicEinsteinEquations(B)}, \(h^*g_B=\Ric_{g_B}+\tfrac12g\) is a constant multiple of \(g_B\). In particular, \(h^*g_{\sans{sym}}\), and hence also \(h^*g_{\sans{sym}}^\C =\tr\Phi\Phi^*\), either vanishes identically or vanishes nowhere. The claim now follows since \(\tr\Phi\Phi^*\) vanishes at \(b\in B\) if and only if \(\gamma_b=0\), and \(\gamma\) has exactly \(2g-2-2d\) zeroes (counted with multiplicity) since it is a holomorphic section of a line bundle of degree \(2g-2-2d\).
    \end{proof}
\end{lemma}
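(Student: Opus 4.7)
The plan is to argue by contradiction, chaining together the constraint that local homogeneity imposes on the base metric with the explicit form of the Higgs field $(L,0,\gamma)$. First I would assume that $(M,g)$ is locally homogeneous and invoke Lemma~\ref{lem_suff_for_homogeneity} to conclude that $(B^2,g_B)$ is locally homogeneous as well. Because $\dim B = 2$, local homogeneity forces $g_B$ to have constant Gauss curvature, so $\Ric(g_B) = K \cdot g_B$ for some constant $K$. Feeding this into the horizontal twisted Einstein equation \eqref{Eq:HarmonicEinsteinEquations(B)}, I get
\[
    h^* g_{\mathsf{sym}} = \Ric(g_B) + \tfrac12 g_B = \bigl(K + \tfrac12\bigr) g_B,
\]
a constant multiple of the nowhere-vanishing metric $g_B$. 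In particular, $h^*g_{\mathsf{sym}}$ must either vanish identically on $B$ or be nowhere zero.

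Next I would translate this dichotomy into a statement about the Higgs field. As in the proof of Lemma~\ref{lem_hopfdif}, decomposing $h^{-1}dh^{\mathbb C} = \Phi + \Phi^*$ yields
\[
    h^* g_{\mathsf{sym}}^{\mathbb C} = \tr \Phi^2 + \tr \Phi \Phi^* + \tr (\Phi^*)^2,
\]
and the $(1,1)$-component $\tr \Phi \Phi^*$ recovers (up to a positive factor) the pointwise norm of $h^*g_{\mathsf{sym}}$ with respect to any metric in $c$. For the explicit Higgs bundle $(L,0,\gamma)$ with $E = L \oplus L^{-1}$ and
\[
    \Phi = \begin{pmatrix} 0 & 0 \\ \gamma & 0 \end{pmatrix},
\]
a direct matrix computation gives $\tr \Phi \Phi^* = |\gamma|^2$, measured with a chosen Hermitian metric on $L$. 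Hence the vanishing set of $h^*g_{\mathsf{sym}}$ coincides exactly with the vanishing set of $\gamma$.

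Finally, I would use the degree count to contradict the all-or-nothing vanishing dichotomy. Since $\gamma \in H^0(B, K_B L^{-2})$ and $\deg(K_B L^{-2}) = 2g-2-2d$, and since $0 < d < g-1$ makes this number strictly positive, polystability forces $\gamma \not\equiv 0$, yet $\gamma$ must have exactly $2g-2-2d \geq 1$ zeros counted with multiplicity on the compact surface $B$. Thus the zero set of $\gamma$ is a nonempty proper subset of $B$, contradicting the conclusion that $h^*g_{\mathsf{sym}}$ either vanishes everywhere or nowhere. This contradiction finishes the proof.

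The main technical point to get right is the identification of $\tr \Phi \Phi^*$ with $|\gamma|^2$ and, by extension, the assertion that the conformal class of $h^* g_{\mathsf{sym}}$ on $B$ is controlled by the zero divisor of $\gamma$; everything else is a straightforward invocation of the results already established in the paper (Lemma~\ref{lem_suff_for_homogeneity}, the base equation \eqref{Eq:HarmonicEinsteinEquations(B)}, and the Higgs-bundle dictionary from Example~\ref{ex:SL_2(R)-Higgs}).
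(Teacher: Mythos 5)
Your argument is correct and follows the paper's proof step for step: reduce to local homogeneity of $(B^2,g_B)$ via Lemma~\ref{lem_suff_for_homogeneity}, deduce from \eqref{Eq:HarmonicEinsteinEquations(B)} that $h^*g_{\mathsf{sym}}$ is a constant multiple of $g_B$ and hence vanishes everywhere or nowhere, identify $\tr\Phi\Phi^*$ with (a positive multiple of) $|\gamma|^2$, and conclude by the degree count $\deg(K_B L^{-2}) = 2g-2-2d$. The only difference is that you spell out the matrix computation $\tr\Phi\Phi^* = |\gamma|^2$ and the polystability reason why $\gamma \not\equiv 0$, both of which the paper leaves implicit.
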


Observe that for a genus 2 surface, we only obtain locally homogeneous solitons since \(\mathcal{N}_0\) or \(\mathcal{N}_{1}=\mathcal{N}_{g-1}\) are the only components. However, as soon as the genus of \(B^2\) is \(\ge 3\), we obtain infinite families of solitons that are not locally homogeneous. In fact, by allowing the conformal class \(c\) to vary, we have the following description, from which Corollary~\ref{maincor:dim4} follows easily.

\begin{prop}
    Let \(B^2\) be a closed oriented surface of genus \(g\ge 2\), \(d\in \{0,1\ldots,g-1\}\), and let \(M^4\) be the total space of the oriented vector bundle \(\pi:M^4\to B^2\) with Euler class \(e(M)=d\). Then, there exists a family, depending on \(4(g-1-d)+6(g-1) = 10g-10-4d\) parameters, of invariant expanding Ricci solitons on \(M^4\). These are locally homogeneous if and only if \(d=0,g-1\).
\end{prop}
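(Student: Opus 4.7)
The plan is to combine \Cref{maincor:isom_of_sols} with the description of \(\mathcal{N}_d\) recalled above from \cite{Hitchin87}. Fix \(d\in\{0,1,\ldots,g-1\}\). For each Riemann surface \(\Sigma=(B^2,c)\) and each class \([(L,0,\gamma)]\in \mathcal{N}_d\) (or \([(L,\beta,0)]\) with \(\beta\neq 0\) if \(d<0\), which is equivalent to the previous case by the \(\det^{-1}(\pm 1)\)-gauge transformation), \Cref{maincor:isom_of_sols} yields, finite-to-one and up to isometric bundle maps, an invariant expanding Ricci soliton on some twisted principal \(\R^2\)-bundle \(\tilde B\times_\rho \R^2\to B\) with \(\rho:\pi_1(B)\to \G^0=\SL_2(\R)\) reductive and inducing the conformal structure \(c\).

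The first step is topological: I would argue that the underlying oriented rank-\(2\) real vector bundle \(M=\tilde B\times_\rho \R^2\to B\) has Euler class exactly \(d=\deg L\). This is the standard identification, via the non-abelian Hodge correspondence, of the Toledo invariant (equivalently, the Euler number of the flat \(\SL_2(\R)\)-bundle) with the degree of the Higgs line subbundle. Because oriented rank-\(2\) real vector bundles over \(B^2\) are classified up to isomorphism by their Euler class, this shows that as \((c,[(L,0,\gamma)])\) varies, the resulting solitons all live on the \emph{same} smooth \(4\)-manifold \(M^4\), namely the total space of the oriented vector bundle of Euler class \(d\).

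Next I count parameters. For fixed \(c\), \(\mathcal{N}_d\) is a \(2^{2g}\)-fold cover of the symmetric product \(S^{r}B\) with \(r=2g-2-2d\), hence has real dimension \(2r=4(g-1-d)\). Letting \(c\) vary over Teichm\"uller space \(\mathrm{Teich}(B)\), which has real dimension \(6g-6=6(g-1)\), and observing that \(\Aut(B^2,c)\) as well as the mapping class group act with discrete (zero-dimensional) orbits, and that the correspondence in \Cref{maincor:isom_of_sols} is finite-to-one, I obtain a family of invariant expanding Ricci solitons on \(M^4\) depending on \(4(g-1-d)+6(g-1)=10g-10-4d\) real parameters.

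For the homogeneity statement, the case \(0<d<g-1\) is the lemma preceding the proposition. For \(d=0\), the constraint \(\tr\Phi^2=0\) together with polystability forces \(\Phi=0\), the \(\rho\)-equivariant harmonic map is constant, and by \Cref{prop_totgeo} (or directly \Cref{Prop:UntwistedSolitons} after passing to the cover that trivialises the finite-image representation) the soliton is locally a Riemannian product of a hyperbolic surface with \((\R^2,h_{\mathrm{nil}})\), hence locally homogeneous. For \(d=g-1\), \(\mathcal{N}_{g-1}\) consists of the \(2^{2g}\) lifts of the uniformising representation \(\pi_1(B)\to \mathsf{PSL}_2(\R)\), and by \Cref{ex:uniformisingrep} the corresponding solitons are locally homogeneous. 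The main obstacle I expect is the topological identification in the first step, i.e.\ checking that \(\deg L\) is the Euler number of \(\tilde B\times_\rho \R^2\); the cleanest route is to invoke the classical identification of these two invariants under non-abelian Hodge, but one could also verify it by hand from the flat connection reconstructed from the Higgs data.
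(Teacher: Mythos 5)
Your argument is correct and follows essentially the same route as the paper, which does not write out a separate proof of this proposition but rather lets it follow from the preceding discussion of \(\mathcal{N}_d\), the dimension count over Teichm\"uller space, and the lemma on non-homogeneity for \(0<d<g-1\). The one piece you make explicit that the paper leaves implicit is the identification of the Euler class of \(\tilde B\times_\rho\R^2\) with \(\deg L\) under non-abelian Hodge; you are right that this is the classical Toledo-invariant/Milnor--Wood statement and it is exactly what justifies grouping the solitons by the underlying oriented rank-two bundle. Two small remarks: the parenthetical about \(d<0\) is unnecessary here since the proposition restricts to \(d\in\{0,\ldots,g-1\}\), and for \(d=0\) the cleaner reference is \Cref{prop_totgeo} (since the harmonic map is constant, hence totally geodesic) rather than \Cref{Prop:UntwistedSolitons}, whose hypothesis of trivial \(\rho\) need not hold even after passing to a finite cover because \(\rho(\pi_1(B))\le \mathsf{O}(2)\) need not be finite.
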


\subsection{Dimension 5} We now describe the expanding solitons arising from \Cref{Ansatz} with local $\N^3$-symmetry and $2$-dimensional base, where $\N^3$ is the 3-dimensional Heisenberg group (see \Cref{ex:heis3}). Recall that $\N^3$ is the simplest example of a non-abelian nilpotent Lie group.   

\begin{proof}[Proof of \Cref{maincor:dim5}]
We first describe the groups $\G$ and $\K$ associated to $\N^3$ as per \eqref{eqn_G=Autbeta}. The soliton derivation is made explicit in \Cref{ex:heis3}, and it follows that, under the isomorphism $\Aut(\N^3) \simeq \Aut(\ngo)$, we have
\[
    \G \simeq \left\{  
  \begin{pmatrix}
    A & 0 \\
    0 & \det A
  \end{pmatrix} : A \in \GL_2(\R), \,\, \det A = \pm 1 \right\}, \qquad \K \simeq \left\{  
  \begin{pmatrix}
    A & 0 \\
    0 & \det A
  \end{pmatrix} : A \in \mathsf{O}(2) \right\}.
\]

Let $(M^5,g,X)$ be an expanding  soliton constructed from \Cref{Ansatz} with symmetry group $\N^3$. By \Cref{prop_AnsatthenHE}, there is a corresponding solution $(g_B, \rho, h)$ to the twisted harmonic-Einstein equations \eqref{eqn_twistharmEin} for the pair $(B, \G/\K)$. Let $\chi : \pi_1(B) \to \Z/2\Z$ be given by $\chi = {\rm sign} \circ \det \circ \rho$, and let $\hat B \to B$ denote the connected regular covering corresponding to $\ker \chi$. By pulling back $M$ to $\hat B$ if necessary, we assume henceforth that 
\[
    \rho_* : \pi_1(B) \to \SL_2(\R) \subset \G.
\]
Thanks to \Cref{prop_AnsatthenHE} and \Cref{prop:equiv_iff_isom}, to prove \Cref{maincor:dim5} it suffices to give a $1$-to-$1$ correspondence  (up to equivalence) between the triples $(g_B, \rho, h)$ as above, and triples $(g_B^0, \rho_0, h_0)$ solving the twisted harmonic-Einstein equations for the pair $(B, \G^0/\K^0)$, where $\G^0 \simeq \SL_2(\R)$, $\K^0 =\mathsf{SO}(2)$. 

The new monodromy $\rho_0$ is simply the co-restriction of $\rho$ to $\SL_2(\R)$. It is clear that $\rho_0$ is reductive if an only if $\rho$ is so. Regarding $h_0$, note that any element of $\G$ is a product of one in $\G^0$ and another one of the form ${\rm diag}(\pm 1,1,\pm 1) \in \mathsf{O}(3)$. Thus, viewing them as submanifolds of the symmetric space $\SL_3(\R)/\mathsf{O}(3)$, we see that $\G/\K = \G^0/\K^0$. In particular, we may take $h = h_0$, and also $g_B^0 = g_B$, and it is clear that the new triple solves the twisted harmonic-Einstein equations. 

The fact that this association is indeed $1$-to-$1$ up to equivalences follows from the uniqueness statements in \Cref{thm_CorDonLab} and \Cref{Thm:BaseEqn}.
\end{proof}

\begin{remark}
Geometrically, the situation is as follows: there is a globally-defined, free, proper and isometric action of $\R$ on $M^5$, corresponding to the center of $\N^3$ (which is fixed by $\rho : \pi_1(B) \to \SL_2(\R)$). The corresponding quotient $M_0^4 = M^5/\R$ is a twisted principal $\R^2$-bundle, and the quotient metric is an invariant expanding soliton.
\end{remark}

\section{Expanding Ricci solitons with symmetry}\label{Sec:ExpandingSolitons}

In this section, motivated by the asymptotic behavior of Ricci flows on closed manifolds that collapse with bounded curvature and diameter \cites{Lott10,LP25}, we introduce the notion of affine expanding soliton. This generalises in a natural way the concept of algebraic solitons from the homogeneous setting. We prove that affine expanding solitons satisfy \Cref{Ansatz}, and they thus induce solutions to the twisted harmonic-Einstein equations \eqref{eqn_twistharmEin} by \Cref{prop_AnsatthenHE}.  
This immediately allows us to  finish the proof of \Cref{main:eRS-tHE}.

\begin{definition}\label{def_affinesoliton}
Let \(\pi:M\to B\) be a $\rho$-twisted principal bundle over a closed manifold \(B\), with \(\rho(\pi_1(B))\le \Aut_{\pm 1}(\N)\). A Ricci soliton \((M,g,X)\) is called an \emph{affine expanding Ricci soliton} if 
\begin{enumerate}
    \item \(g\) is \(\N\)-invariant;
    \item the flow $(\eta_t)_{t\in \mathbb{R}}$ of $X$ consists of twisted principal bundle isomorphisms covering $(\check \eta_t)_{t\in \R} \subset \Diff(B)$;
    \item $(\check \eta_t)_{t\in \R}$ generates a gradient vector field $\nabla f$ on $B$.
\end{enumerate} 
\end{definition}

Conditions (1) and (2) are naturally motivated by making the geometry compatible with the  twisted principal bundle structure. Assumption (3) is justified by  compactness of $B$ (recall that compact Ricci solitons are gradient). We emphasise that all these conditions are satisfied by all asymptotic blow-down limits of Ricci flows that collapse with bounded curvature and diameter \cite{LP25}. In particular, they hold for Lott's examples in the case of abelian symmetry \cites{Lot07,Lott10}.

Taking a time derivative at $t=0$ yields the static equation satisfied by affine expanding solitons: 
\begin{equation}\label{eqn:affinesolRic}
     \Ric_g + {\rm Hess} f + \frac12 g =    
     \begin{pmatrix}
         D_\vca & P \\ P^t & 0
     \end{pmatrix}.
\end{equation}
Here \(f:M\to \R\) is the \(\N\)-invariant lift to $M$, \(D_\vca\in \End(\vca)^{\rm inv}\) is the invariant endomorphism corresponding to a derivation field \(D_{\rm ad}\in \Gamma\Der(\ad M)\), and \(P:\hca\to \vca\) is defined implicitly by \(g(PY,U)= - g([X^\vca,Y],U)\). The blocks in \eqref{eqn:affinesolRic} are according to $TM = \vca \oplus \hca$. If \(\theta:TM \to \vca\) is the connection form for \(\hca\) (see Section~\ref{Sec:Twisted Bundles}), then \(P\) is \(\mathcal{L}_{X_\vca}\theta\) restricted to \(\hca\). (This is seen by applying \(\mathcal{L}_{X_\vca}\) to \(0= \theta Y\), \(Y\in \Gamma(\hca)\).) In particular, \(P\) vanishes if and only if the flow of \(X_\vca\) preserves \(\hca\). 

Notice that the condition that \(\rho(\pi_1(B))\le \Aut_{\pm 1}(\N)\) is equivalent to the existence of a globally-defined volume element for the fibres of $\ad M$ which is $\nabla^\ad$-parallel. This allows us to define the relative volume density $\sqrt {\det h} : B \to \R_+$ of the bundle metric $h$ on $\ad M$. Since we are locally in a principal bundle, the mean curvature vector $\bar \mcv$ satisfies $\pi_* \bar \mcv =: \mcv =  -\nabla \log \sqrt{\det h}$. Combining with the theory developed in \cite{alek_sol} (in the case of principal bundles), this also allows us to consider the $\beta_\ad^+$-weighted volume density of the fibres $v_{\beta^+}^\ad : B \to \R$, see \cite{BL23b}*{$\S$3} for further details about this function. We note that, even though those references are in the context of untwisted principal bundles, the $\Aut(\ngo)$-invariance of the $\beta$-volume density $v_\beta$ and the fact that the structure group of $M$ is contained in the affine group of $\N$, imply that  $v_{\beta^+}^\ad$ is well defined.

In the following theorem, we make use of the `weighted Laplacian' \(\Delta_{\rm m} = \dive_{\rm m}\circ \nabla\). Here ${\rm div}_{\rm m}$ is the divergence operator with respect to the weighted measure 
\[
    {\rm dm} :=  e^{-f} \sqrt{\det h} \, {\rm d vol}_{\check g},
\]
where ${\rm dvol}_{\check g}$ is the Riemannian volume density of $(B,\check g)$. Since $\mcv =  -\nabla \log \sqrt{\det h}$, for any $u\in \mathcal{C}^\infty(B)$ we have
\begin{equation}\label{eqn:weightedLaplacian}
    \Delta_{\rm m} u  = \Delta_{\check g} u   - \check g(\mcv + \nabla f, \nabla u).
\end{equation}

\begin{theorem}\label{thm_afine_then_ansatz}
Let $(M,g,X)$ be an affine expanding Ricci soliton on a $\rho$-twisted principal bundle $\pi : M \to B$ over a closed manifold $B$. Then, $(g,\rho, g_B)$ satisfy the twisted harmonic-Einstein equations \eqref{eqn_twistharmEin} for the pair $(B, \G/\K)$, where $\G$ and $\K$ are defined in \eqref{eqn_G=Autbeta}.
\end{theorem}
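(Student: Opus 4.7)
The plan is to verify that $(M, g, X)$ satisfies \Cref{Ansatz}; the theorem then follows from \Cref{prop_AnsatthenHE}. The flatness of $\hca$ with holonomy $\rho$ and the inclusion $\rho(\pi_1(B)) \subseteq \Aut_{\pm 1}(\N)$ are already recorded in the setup preceding the theorem (they are built into the limit structure provided by \Cref{thm_LP}), so the substantive content is to establish conditions (ii) and (iii) of the Ansatz: minimality of fibres, the fibrewise nilsoliton property with $\nabla^\ad$-parallel Ricci endomorphism, and the verticality and fibre-soliton nature of $X$.

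My approach is to adapt the weighted-volume-density argument of \cite{alek_sol}, originally developed for invariant Einstein metrics on untwisted principal bundles, to the twisted setting. The key global object is the function $v_{\beta^+}^\ad : B \to \R$ introduced before the statement; it is well-defined globally precisely because $\rho$ takes values in unimodular automorphisms that preserve the canonical GIT weight $\beta_\ad$. Adapting the weighted-Weitzenböck computation of \cite{alek_sol}*{$\S\S 4$--$5$} and substituting the Ricci decomposition \eqref{eqn:Ric=beta} into the submersion formulas of \Cref{Prop:SubmersionRicci}, I expect to obtain a divergence identity of the schematic form
\begin{equation*}
\Delta_{g_B}\bigl(\log v_{\beta^+}^\ad - f\bigr) \;=\; |A|^2 + |\mcv|^2 + \mathcal{E}(h),
\end{equation*}
where $\mathcal{E}(h) \geq 0$ vanishes exactly when each fibre $((\ad M)_b, h_b)$ is a nilsoliton with Ricci endomorphism $\tfrac{1}{2}\beta_\ad(b)$ and the prescribed parallel volume. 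Integration over the closed base $B$ kills the left-hand side, forcing each nonnegative term on the right to vanish. This gives $A = 0$ (consistent with flatness of $\hca$), $\mcv = 0$ (minimality of fibres), and the fibrewise nilsoliton condition with the prescribed Ricci endomorphism. The $\nabla^\ad$-parallelism of $\RicE{h}$ is then inherited from that of $\beta_\ad$, which follows from the canonical characterisation of the GIT weight together with its compatibility with parallel transport along $\nabla^\ad$.

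For condition (iii), the derivation field $D = -\RicE{h} - \tfrac{1}{2} h \in \Gamma\,\Der(\ad M)$ together with \Cref{Lemma:SolVec} produces a vertical field $X_D$ that accounts for the vertical part of the soliton equation \eqref{eqn_eRS}. The difference $X - X_D$ is then a $g$-Killing field; since a soliton vector field may be modified by any Killing field without altering the soliton equation, we may replace $X$ by $X_D$, which is vertical and restricts to a nilsoliton vector field on each fibre.

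The main obstacle will be establishing the global weighted-Weitzenböck identity in the twisted setting. All pointwise computations carry over verbatim from \cite{alek_sol} because the local geometry coincides with that of a principal bundle, but the global integration requires care to verify that $f$, $v_{\beta^+}^\ad$, and $\beta_\ad$ are well-defined and behave compatibly under the transition maps of the bundle atlas. The unimodularity of $\rho$ and the $\Aut(\ngo)$-invariance of the canonical GIT weight $\beta$ are the crucial inputs that make this globalisation possible.
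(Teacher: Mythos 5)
Your overall strategy is correct and matches the paper: show \Cref{Ansatz} holds, then cite \Cref{prop_AnsatthenHE}, with the weighted volume density $v_{\beta^+}^\ad$ as the key global object on $B$. However, there are two issues worth flagging.

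First, a structural caveat: the paper does not produce a single divergence identity of the schematic form you write down. Instead, it applies the strong maximum principle twice to the \emph{weighted} Laplacian $\Delta_{\rm m}$ associated to the measure ${\rm dm} = e^{-f}\sqrt{\det h}\,{\rm dvol}_{\check g}$ (not the unweighted $\Delta_{g_B}$ acting on $\log v_{\beta^+}^\ad - f$). The first application (to $\log v_{\beta^+}^\ad$) yields $A = 0$, $\beta^+_\ad \in \Gamma\Der(\ad M)$ and $[L_X,\beta^+_\ad]=0$; the second (to $\scal h + \log v_\beta^\ad$) yields the fibrewise nilsoliton condition, minimality, and the fact that shape operators are derivation fields. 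Your proposal collapses these into one identity whose existence and precise structure would need independent verification; on a closed $B$, integration and the maximum principle yield the same conclusions once the differential inequality is established, so this difference is more about bookkeeping than substance, but the inequality you would need to integrate is not self-evident.

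Second, and more seriously, your argument for condition (iii) is circular. You assert that $X - X_D$ is a $g$-Killing field, but this would follow only if you already knew that $(g, X_D)$ itself solves the soliton equation — which is precisely what must be established via the harmonic-Einstein equations. In particular, nothing in your argument rules out the possibility that $X$ has a nontrivial horizontal gradient component $\nabla f$; if $f$ were nonconstant, the base equation would retain a ${\rm Hess}(f)$ term and the conclusion \eqref{Eq:HarmonicEinsteinEquations(B)} would fail. The paper devotes a separate Step 3 to proving that $f$ is constant, by recognising the reduced equations \eqref{eqn:hRFsoliton} as the equations for an expanding gradient soliton of the harmonic Ricci flow, and then adapting Williams' maximum-principle argument (in the spirit of Hamilton's for Ricci flow) to the quantity $\tilde s = \tr_{\tilde g_B}(\Ric_{\tilde g_B} - h^*g_{\rm sym})$, using $\pi_1(B)$-equivariance to descend to the compact base. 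This step is an essential, nontrivial part of the proof that your proposal omits.
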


\begin{proof}
By \Cref{prop_AnsatthenHE}, it suffices to show that  \Cref{Ansatz} is satisfied. 

\smallskip

{\bf \noindent Step 1.} $(D_{\rm ad}, D_{\rm ad}-\tfrac12 g )_{L^2({\rm dm})}\le0$ with equality if and only if the flow of \(X_\vca\) preserves the horizontal distribution \(\hca\).

\smallskip

Let \(\{g_t = \eta^*_tg\}\) be the solution to the normalised Ricci flow \eqref{eqn:NRF} starting from \(g\). Observe that \(g_t\) is also an affine soliton with \(f_t = f\circ \eta_t\). We consider the variation of Perelman's weighted scalar curvature \(\scal_{\rm m} = \scal+2\Delta_g f - |\nabla f|^2\) (cf.~\cite{per1}*{Section 1.3}) along the flow, computing it in two different ways. Notice that the horizontal part of the soliton vector field is \(X_{\hca} = \nabla f\), so that \(\partial_t f_t = |\nabla f|^2\).


On the one hand, we have \[\scal_{{\rm m},t} = \scal_{\rm m}\circ \eta_t,\] so the variation is \(\m{\nabla \scal_{\rm m}}{\nabla f}\). (Notice that \(\nabla\scal_{\rm m}\) is horizontal since \(\scal_{\rm m}\) is constant on the fibres of \(\pi:M\to B\).) On the other hand, we can calculate using the variational formulae for geometric quantities (see \cite{Besse87}*{\S 1.K}, noting that they use the opposite sign convention for the Laplacian). Equating these gives:
\begin{multline}\label{eq_variationalformula}
    \m{\nabla \scal_{\rm m}}{\nabla f} = \big( \Delta \scal +2\m{\Ric_g +\tfrac{1}{2}g}{\Ric_g}\big) \\ + 2 \big( \Delta |\nabla f|^2  +2 \m{{\rm Hess}f}{\Ric_g +\tfrac{1}{2}g}\big) -\big( 2\Ric(\nabla f,\nabla f) +|\nabla f|^2 +2\m{\nabla |\nabla f|^2}{\nabla f}\big).
\end{multline}
Using the affine soliton equation \eqref{eqn:affinesolRic}, we have \[2\m{\Ric_g +\tfrac{1}{2}g}{\Ric_g} +4\m{{\rm Hess}f}{\Ric_g +\tfrac{1}{2}g}  = 2\m{D_{\rm ad}}{D_{\rm ad}-\tfrac{1}{2}g}+4|P|^2 +\Delta f -2|{\rm Hess} f|^2.\]
Subbing this into \eqref{eq_variationalformula} and cancelling terms using Bochner's formula, we are left with
\[ \m{\nabla \scal_{\rm m}}{\nabla f} = \Delta \big(\scal +|\nabla f|^2+f\big)  -|\nabla f|^2 -2\m{\nabla |\nabla f|^2}{\nabla f} +2\m{D_{\rm ad}}{D_{\rm ad}-\tfrac{1}{2}g}+4|P|^2  +2\m{\nabla \Delta f}{\nabla f}.\]
Hence, recalling that \(\scal_{\rm m} = \scal +2\Delta f-|\nabla f|^2\) and rearranging gives \begin{align*}
    0& = \Delta \big(\scal +|\nabla f|^2+f\big)  -|\nabla f|^2 -\m{\nabla |\nabla f|^2}{\nabla f} +2\m{D_{\rm ad}}{D_{\rm ad}-\tfrac{1}{2}g}+4|P|^2  -\m{\nabla \scal}{\nabla f}\\
    & =  \Delta_{\rm m} \big(\scal +|\nabla f|^2+f\big)  +2\m{D_{\rm ad}}{D_{\rm ad}-\tfrac{1}{2}g}+4|P|^2.
\end{align*}
The claim now follows.

\bigskip

{\bf \noindent Step 2.} $A=0$, $D_{\rm ad}=\tfrac12 \beta^+_{\ad} \in  \Gamma\Der(\ad M)$ and $\nabla^\ad \beta^+_\ad = 0$.

\smallskip


As in the Einstein case (cf.~\cite{BL23b}), the idea is to apply the maximum principle on $B$ to the function $\log v_{\beta^+}^\ad$,  
but with the `weighted Laplacian' $ \Delta_{\rm m} = {\rm div}_{\rm m} \circ \nabla$. 
By \eqref{Eq:SubmersionCurvature(V)}  and \eqref{eqn:affinesolRic}, we have the following equation on sections of $\End(\ad M)$:
\begin{equation}\label{eqn:Ric_soliton_VV}
    \sum_i (\nabla_{X_i} L)_{X_i} - L_{\mcv+\nabla f}  = \RicE{h} + A^* A +\tfrac12{\rm Id_{ad}}- D_{\rm ad} 
\end{equation}
Hence, \eqref{eqn:weightedLaplacian}, \cite{BL23b}*{(3.5) $\&$ Lemma 3.13}, \eqref{eqn:Ric_soliton_VV}  and \Cref{rmk_GIT},  \ref{item_beta+>0}  $\&$ \ref{item_RicGIT} give
\begin{equation}
    \begin{aligned}
     \Delta_{\rm m} \log v^\ad_{\beta^+}
        & \geq   \tr \Big( \big( \sum_i (\nabla_{X_i} L)_{X_i}\big) \beta^+_\ad \Big) - \check g(\mcv + \nabla f, \nabla \log v_{\beta^+}^\ad)  \\
        & =   \tr \Big( \big( \sum_i (\nabla_{X_i} L)_{X_i} -  L_{\mcv + \nabla f}\big) \beta^+_\ad \Big)  \\
        & = \tr \RicE{h} \beta^+_\ad + \tr A^* A \beta^+_\ad +\tfrac12 \tr\beta^+_{\rm ad}- \tr D_{\rm ad} \beta^+_{\rm ad}\\
        & \geq \tfrac12 \tr\beta^+_{\rm ad}- \tr D_{\rm ad} \beta^+_{\rm ad}.  
    \end{aligned}
    \label{eq_estimate}
\end{equation}
Using the properties of \(\beta^+_{\rm ad}\) (see Appendix~\ref{app_nil}), we have
    \[\tfrac12 \tr\beta^+_{\rm ad}- \tr D_{\rm ad} \beta^+_{\rm ad} = 2|\tfrac12 \beta^+_{\rm ad} - D_{\rm ad}|^2 -2\m{D_{\rm ad}}{ D_{\rm ad}-\tfrac12 g}.    \]
Combining this with the estimate \eqref{eq_estimate} gives \[\Delta_{\rm m} \log v^\ad_{\beta^+}\ge 2|\tfrac12 \beta^+_{\rm ad} - D_{\rm ad}|^2 -2\m{D_{\rm ad}}{ D_{\rm ad}-\tfrac12 g}. \]
Integrating this equality over \(B\) with the measure \({\rm dm}\) gives \(D_{\rm ad} = \tfrac12 \beta^+_{\rm ad}\) and \(P=0\) since we have \(-(D_{\rm ad}, D_{\rm ad}-\tfrac12 g)_{L^2({\rm dm)}}\ge 0\) by Step 1. Since \(\tr (\beta^+_{\rm ad})^2 = \tr \beta^+_{\rm ad}\), the estimate \eqref{eq_estimate} becomes 
\[\Delta_{\rm m}\log v_{\beta^+}^\ad \ge 0.\]

By the strong maximum principle, $\log v_{\beta^+}^\ad$ must be constant, and equality occurs everywhere on $B$. Moreover, the rigidity for the last estimate in \eqref{eq_estimate} yields $\beta^+_{\ad} \in  \Gamma\Der(\ad M)$ and $A=0$, whereas that for the first estimate gives $[L_X, \beta^+_\ad] = 0$ for all $X \in TB$. On the other hand, \eqref{eqn_adDecomp} gives 
\[
    \nabla_X^\ad \beta^+_\ad = \nabla_X^{\rm LC} \beta^+_\ad - [L_X, \beta^+_\ad] =  \nabla_X^{\rm LC} \beta^+_\ad ,
\]
where $\nabla^{\rm LC}$ is the connection on $\ad M$ induced by the restriction of the Levi-Civita connection on $M$ to invariant fields. The fact that $\nabla_X^{\rm LC} \beta^+_\ad = 0$ follows by arguing  exactly  as in the proof of \cite{alek_sol}*{Cor.~8.4}.

\bigskip

{\noindent \bf Step 3.} The fibres of $\pi: M \to B$ are nilsolitons and minimal.

\smallskip

For this step we follow the ideas in \cite{alek_sol}*{$\S$9}, which in turn rely heavily on the moment map formulation for the Ricci curvature of nilmanifolds (see \Cref{app_nil}). By Step 2 we have
\begin{align}
    \tau_{\tilde g_B}(h) - L_{\mcv + \nabla f} &=  \RicE{h} - \tfrac12 \beta^+_\ad + \tfrac12 {\rm Id}_{\ad}, \label{eqn:tau=rich} \\
    \Delta_{\rm m} \log v_\beta^\ad &= \tr \big( \beta^+_\ad - {\rm Id}_\ad \big) \big( \tau_{\tilde g_B} - L_{\mcv+ \nabla f} \big). \label{eqn:Deltavbeta}
\end{align}
The convexity properties of the moment map \cite{alek_sol}*{Prop.~9.6} give the following estimates for the scalar curvature of the fibres:
\begin{equation}
    \Delta_{\rm m} \scal(h) \leq -2 \, \tr \Big(  \RicE{h}  \big( \tau_{\tilde g_B} - L_{\mcv + \nabla f} \big) \Big), \label{eqn:Deltascalh}
\end{equation}
with equality if and only if $L_X \in \Gamma \Der(\ad M)$ for all $X\in TB$. Combining \eqref{eqn:tau=rich}, \eqref{eqn:Deltavbeta} and \eqref{eqn:Deltascalh}  we get
\begin{align*}
    \Delta_{\rm m} (\scal h + \log v_\beta^\ad) 
        & \leq  \tr \big( -2\, \RicE{h}  + \beta^+_\ad - {\rm Id}_\ad  \big) \big( \tau_{\tilde g_B}  - L_{\mcv + \nabla f} \big) \\
        &  = -\tfrac12 \, \big| {-2} \, \RicE{h}  + \beta^+_\ad - {\rm Id}_\ad \big|_h^2.
\end{align*}
Again by the strong maximum principle, equality must hold everywhere, and we deduce three things. Firstly, that the fibres are nilsolitons with their induced geometry, as their Ricci curvature satisfies
\[
    \RicE{h} = -\tfrac12 {\rm Id }_\ad + \tfrac12 \beta^+_\ad,
\]
with $\beta^+_\ad \in \Gamma \Der(\ad M)$ (see \Cref{app_nil}). Secondly, equality in \eqref{eqn:Deltascalh} implies that the shape operators are also derivation fields. This has the effect of restricting the image of the bundle metric map $h : \tilde B \to \Sym^2_+(\ngo^*)$ to the orbit of $\Aut(\ngo)$.  Finally, from the fact that $\log v_\beta^\ad$ and $\log v_{\beta^+}^\ad$ are both constant, we must have that $\det h$ is constant as well, hence the fibres are minimal submanifolds.

\bigskip

The results from the previous steps show that \Cref{Ansatz}, \ref{item:A_soliton} holds. Regarding the soliton vector field, the vertical part of $X$ already satisfies \ref{item:A_X}, so  we must show that

\smallskip

{\bf \noindent Step 4. } $X$ is a vertical vector field (i.e. $f$ is constant).

\smallskip 

Notice that, after all the simplifications from the previous steps on the Ricci curvature equations, the data $(\tilde g_B, h)$ of a metric on $\tilde B$ and a map $h: \tilde B \to \G/\K$ satisfy
\begin{equation}\label{eqn:hRFsoliton}
\begin{split}
    0 & = \tau_{\tilde g_B} h - L_{\nabla f}, \\
    0 & = \Ric_{\tilde g_B} - h^* g_{\rm sym} + {\rm Hess}(f) + \tfrac12 \,  \tilde g_B.
\end{split}
\end{equation}
These are precisely the equations for an expanding gradient soliton for the \emph{harmonic Ricci flow}  (i.e.~Ricci flow coupled with harmonic map flow), see \cite{Mul12}. It was shown in \cite{Wil15}*{Prop.~2.10}, based in turn on Hamilton's analogous Ricci flow result, that for a closed source manifold, expanding gradient solitons must be harmonic-Einstein. The only difference for us is that our source $\tilde B$ is non-compact. However, due to the equivariance of $\tilde g_B$ and $h$, and the fact that $B$ is compact, the same proof  works.  

Indeed, set $\tilde S := \Ric_{\tilde g_B} - h^* g_{\rm sym}$, $\tilde s := \tr_{ \tilde g_B} \tilde S$. From the local computations in \cite{Mul12}*{Thm.~4.5} we have that the evolution of $\tilde s$ along harmonic Ricci flow is 
\[
    \partial_t \tilde s = \Delta \tilde s + 2 \, \| \tilde S \|^2 + 2 \, \| \tau_{\tilde g_B}(h) \|^2.
\]
On the other hand, given \eqref{eqn:hRFsoliton} it follows that the solution is self-similar, thus 
\[
    \partial_t \tilde s =  \widetilde X_\hca (\tilde s) - \tilde s,
\]
where $\widetilde X_\hca$ is the lift of $\nabla f$ to $\tilde B$. Since $\tilde s$ is $\pi_1(B)$-invariant, it induces a smooth function $s: B \to \R$, which by the above satisfies
\[
    \Delta s - \langle \nabla  s , \nabla f  \rangle  +  2 \, \|  S \|^2 + 2 \, \| \tau_{\tilde g_B}(h) \|^2 + s = 0.
\]
At this point one may argue exactly as in the Ricci flow case, see e.g.~the proof of \cite{libro}*{Prop.~1.13}, to conclude that $f$ is constant.

\bigskip

{\bf \noindent Step 5. } The monodromy of \(\mathcal{H}\) is \(\rho\).

\smallskip
Our goal is to show that the monodromy has no translational part. Observe that we already have this monodromy reduction on an infinitesimal level. Namely, the existence of the parallel section \(\beta^+_{\rm ad}\) of \(\End(\ad M)\) implies that \(\rho_{\hca,*}(\pi_1(B))\le \Aut(\mathfrak{n})\) centralises \(\beta^+_h\) by the holonomy principle. The general monodromy reduction will follow similarly: It follows from the previous steps that the affine isomorphisms \(\{\eta_t\}\) driving the Ricci flow preserve the connection. (We have equality in Step 1 since \(D_{\ad}=\tfrac{1}{2}\beta^+_{\ad}\), so the flow of \(X=X_\vca\) preserves \(\hca\).) Hence, their restriction to a given fibre of \(\pi:M\to B\) commutes with the monodromy. This will force the modromy to consist of automorphisms by Lemma~\ref{lem_commutation}.

Fix \(b\in B\). By Steps 3 and 4, the fibre \(M_b=\pi^{-1}(b)\) with the vector field \(X|_{M_b}\) is a nilsoliton for the induced metric. In particular, there is a \(p\in M_b\) such that \(X_p=0\) by \Cref{prop_nilsoliton_vf}. If \(\hat \gamma\) is the horizontal lift of a closed curve representing \(\gamma \in \pi_1(B)\) beginning at \(q\in M_b\), then \(\eta_t\circ \hat\gamma  \) is a horizontal lift starting from \(\eta_t(q)\). Hence, we have 
\[\eta_t(\rho_{\hca }(\gamma)q) = \eta_t(\hat\gamma(1)) = (\eta_t\circ\hat\gamma)(1) =  \rho_{\hca}(\gamma)\eta_t(q).\]
Identifying \(M_b\simeq \N\) with an affine map that sends \(p\in M_b\) to \(e\in \N\), \(\{\eta_t|_{M_b}\}\) is identified with a one-parameter family of automorphisms (since it is a one-parameter family of affine maps fixing the identity). Applying Lemma~\ref{lem_commutation}, we see that \(\rho_{\hca}(\gamma)\in \Aut(\N)\) for all \(\gamma\in \pi_1(B)\). Since the automorphism part of the monodromy is precisely \(\rho\), the claim follows.

\end{proof}

\begin{proof}[Proof of \Cref{main:eRS-tHE}]
    Given a solution \((g_B,\rho,h)\) to the twisted harmonic-Einstein equation for the pair \((B,\G/\K)\), \Cref{prop_AnsatthenHE} constructs an expanding Ricci soliton  \((M:=\tilde B\times_\rho \N, g, X)\) satisfying \Cref{Ansatz}. Since $X$ is vertical, its flow consists of bundle maps, and restricted to each fibre it is the flow of a nilsoliton. By \Cref{prop_nilsoliton_vf}, the fibre restrictions are affine maps, thus the soliton is affine. Conversely, if \((M,g,X)\) is an affine expanding soliton, then \((g,X)\) satisfies Ansatz~\ref{Ansatz} by Theorem~\ref{thm_afine_then_ansatz}. Hence, we obtain a solution to the twisted harmonic-Einstein equation by \Cref{prop_AnsatthenHE}.
\end{proof}

To conclude this section, we now justify some claims made in the introduction.

\begin{prop}\label{prop:non-gradient}
An affine expanding Ricci soliton is of gradient type if and only if $\N\simeq \R^n$ is abelian and $\rho$ takes values in $\mathsf{O}(n)$.
\end{prop}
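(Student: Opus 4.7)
The plan is to extract the gradient condition into two constraints: one purely fibrewise (forcing the nilpotent fibres to be abelian) and one horizontal (forcing $h$ to be constant on $\tilde B$, whence the $\mathsf{O}(n)$-condition on $\rho$). The converse is then a short verification.

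Suppose $X=\nabla^g f$. \Cref{Ansatz}\ref{item:A_X} forces $X$ to be vertical, and the orthogonal splitting $TM=\hca\oplus\vca$ of a Riemannian submersion metric then gives both $df|_{\hca}=0$ and $X|_{M_b}=\nabla^{h_b}(f|_{M_b})$ on every fibre. Hence each $(M_b,h_b,X|_{M_b})$ is a homogeneous gradient Ricci soliton. The rigidity of left-invariant gradient Ricci solitons on nilpotent Lie groups \cites{soliton,Jbl2015} then forces $(M_b,h_b)$ to be flat, so $\N\simeq\R^n$ and each $h_b$ is a flat inner product. In this case the nilsoliton derivation is $D=-\tfrac12\,\mathrm{Id}$, so in linear coordinates $X|_{M_b}(x)=-\tfrac12\,x^i\partial_{x^i}=\nabla^{h_b}\!\big({-}\tfrac14 h_b(x,x)\big)$, whence $f|_{M_b}(x)=-\tfrac14 h_b(x,x)+c_b$ for some constant $c_b\in\R$.

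Next, lifting $f$ to $\tilde f$ on $\tilde M=\tilde B\times \R^n$ and working in coordinates adapted to the flat connection $\hca$, the fibrewise description forces $\tilde f(b,x)=-\tfrac14 h(b)(x,x)+c(b)$. The requirement $d\tilde f|_{\hca}=0$ then reads
\[
0=-\tfrac14(\partial_{b^k}h)(x,x)+\partial_{b^k}c\qquad\text{for every }(b,x)\in\tilde B\times\R^n.
\]
Since the $x$-quadratic and $x$-constant parts must vanish separately, $\partial_{b^k}h\equiv 0$ on $\tilde B$, so $h\equiv h_0$ is constant. Combining this with the $\rho$-equivariance of $h$ from \Cref{prop_metricsadM} gives $h_0=\rho(\gamma^{-1})\cdot h_0$ for all $\gamma\in\pi_1(B)$, i.e.\ $\rho(\pi_1(B))\subset \mathsf O(h_0)$; choosing a basis of $\R^n$ in which $h_0$ is the standard inner product yields $\rho(\pi_1(B))\subset\mathsf O(n)$.

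For the converse, assume $\N\simeq\R^n$ and $\rho(\pi_1(B))\subset\mathsf O(n)$. The constant bundle metric $h\equiv h_0$ (standard) is then a $\rho$-equivariant, harmonic solution of \eqref{Eq:HarmonicEinsteinEquations(V)}, and since $h^*g_{\mathsf{sym}}=0$ the base equation \eqref{Eq:HarmonicEinsteinEquations(B)} reduces to $\Ric(g_B)=-\tfrac12 g_B$. The function $\tilde f(b,x)=-\tfrac14 h_0(x,x)$ is $\pi_1(B)$-invariant because $\rho$ preserves $h_0$, hence descends to $f\in\mathcal C^\infty(M)$; a direct computation of the gradient in the block-diagonal submersion metric gives $\nabla^g f=-\tfrac12 x^i\partial_{x^i}=X_D=X$, so this soliton is gradient.

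The principal non-trivial ingredient is the rigidity of left-invariant gradient Ricci solitons on nilpotent Lie groups, i.e.\ the assertion that a non-abelian nilsoliton is never of gradient type even after modification by Killing fields. This can be invoked from the structure theory of homogeneous Ricci solitons, or verified directly (say on the Heisenberg group, where one computes that the Euler-type vector field generating $\exp(tD)$ fails to be $h_{\mathrm{nil}}$-metric-dual to an exact one-form); the other steps are essentially linear-algebraic consequences of the Ansatz and of Proposition~\ref{prop_metricsadM}.
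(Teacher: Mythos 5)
Your proof is correct and follows essentially the same route as the paper: reduce to the fibre to rule out non-abelian $\N$, then use horizontal constancy of the potential and the fibrewise Gaussian form to force $h$ constant and hence $\rho(\pi_1(B))\subset\mathsf O(n)$. The one substantive difference is the reference you invoke for the fibrewise rigidity: the paper derives non-gradience of non-abelian nilsolitons from Petersen--Wylie's rigidity theorem for homogeneous gradient solitons combined with Milnor's sign-indefiniteness of Ricci on non-abelian nilpotent groups, whereas \cite{soliton} and \cite{Jbl2015} do not directly state that gradient nilsolitons are flat, so you would want to cite Petersen--Wylie (or supply the short Milnor-type argument) at that step.
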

\begin{proof}
Assume that $X = \nabla f$ for some smooth function $f \in \mathcal{C}^\infty$. 
By \Cref{Ansatz}, the structure group of the bundle $\pi: M\to B$ reduces to $\Aut_{\pm 1}(\N)$, thus $M$ is a Lie group bundle (vector bundle in case $\N$ is abelian), and there is a well-defined identity (zero) section. From \Cref{Ansatz}, \ref{item:A_X} we also know that $X$ is vertical and it restricts to a soliton vector field $X_b$ of each fibre $M_b$. In particular, $f$ is horizontally constant. Moreover, by the construction in \Cref{Lemma:SolVec}, $X$ vanishes along the identity section.  Notice that $X$ is well-defined only up to adding a global Killing field $K$, which in the gradient case must be parallel. We will assume $K=0$; the proof for $K\neq 0$ parallel is analogous.

Suppose first that $\N$ is abelian. We may assume without loss of generality that $f$ vanishes along the zero section.  The fibre $M_b$ is a flat Gaussian soliton, and its vector field is the Euler vector field (the one whose flow is given by scalar multiplication). In terms of the potential, we have 
\[
 f(b,v) = \tfrac12 |v|_{h(b)}^2,  \qquad b\in U \subset B, \,\, v\in \R^n,
\]
in a local trivialisation $U\times \R^n$ adapted to the flat connection. Extending $v$ to be covariantly parallel in horizontal directions, and using that $f$ is horizontally constant, we deduce that $h$ is preserved by the flat connection. This in particular implies that the monodromy $\rho$ takes values in $\mathsf{O}(n)$. The converse is clear.

If $\N$ is nilpotent and  non-abelian, it suffices to show that the fibres are non-gradient solitons. This is well-known, and it follows from rigidity of homogeneous gradient solitons \cite{PW}. Indeed,  any homogeneous gradient soliton is isometric to (a quotient of) $\R^k \times E^{n-k}$, where $E$ is Einstein and $\R^k$ flat.  In particular, its Ricci curvature is semi-definite, and this contradicts a result of Milnor \cite{Mln}. 
\end{proof}

\begin{remark}\label{rmk_nongrad}
\Cref{prop:non-gradient} also shows that, even in the abelian case, many of the solitons we constructed are of non-gradient type. Indeed, Hitchin  \cite{Hit92} showed that, for $n>2$, the representations with $[\rho] \in \Hom(\pi_1(B), \mathsf{PSL}_n(\R))/\mathsf{PSL}_n(\R)$ which can be continuously deformed to one taking values in the maximal compact subgroup $\mathsf{PSO}(n)$ all belong to a single connected component of the character variety (whose elements have the same topological type as those in the Hitchin component). But there are 2 (resp.~5) other connected components when $n$ is odd (resp.~even).
\end{remark}

\begin{remark}\label{rmk_not_conical}
It follows from \eqref{eqn:affinesolRic} and the proof of \Cref{thm_afine_then_ansatz} that  the scalar curvature of a soliton $(M,g,X)$ constructed from \Cref{Ansatz} is a negative constant, given by
\[
    \scal_g = \begin{cases}
        - (\dim B)/2, & \qquad \N \simeq \R^n \hbox{ abelian;}\\
        -1 / (\tr \beta^2), & \qquad \hbox{otherwise.}
    \end{cases}
\]
In particular, $(M,g)$ is \emph{not} asymptotically conical, since the scalar curvature would otherwise decay to $0$ at infinity. 
\end{remark}

\section{Einstein extensions\texorpdfstring{: proof of \Cref{main_Einstein}}{}}\label{sec:Einstein}

In this section we show that the expanding Ricci solitons constructed via \Cref{Ansatz} give rise to an extension, one dimension higher, which is Einstein. 

More precisely, let $\pi : M \to B$ be a twisted principal bundle such that $(M,g,X)$ is a solution to the soliton equation \eqref{eqn_eRS} obtained from \Cref{Ansatz}. Consider the derivation field of $\ad M$ obtained as in the proof of \Cref{prop_AnsatthenHE} from the Ricci curvature of the fibres. It gives rise to an invariant endomorphism field $D_M \in \End(TM)$ as in \Cref{Lemma:SolVec}, which is related to the Ricci endomorphism of the soliton via 
\begin{equation}\label{eqn:Ricg}
    \RicE{g} = -\tfrac12 \, {\rm Id} - D_M. 
\end{equation}
To define the extension, we follow the ideas and notation in \cite{NikoAlek21}, which are in turn inspired in the homogeneous case. Namely, let $M_0 := \R \times M$, set $D := \alpha D_M \in \End(TM)$, with $\alpha > 0$ a constant to be determined later. Endow $M_0$ with the metric $du^2 + g_u$, where $g_u := \exp (u  \, D)^* g$ is a one-parameter family of metrics in $M$. By \cite{NikoAlek21}*{Thm.~1.3}, $(\R\times M, du^2 + g_u)$ is Einstein (with Einstein constant $-\tr(D^2)$) if and only if the following conditions hold:
\begin{enumerate}  
    \item[(i)]  The endomorphism $D$ has constant eigenvalues and ${\rm div}_g D = 0$;
    \item[(ii)] The Ricci endomorphism of $g_u$ is independent of $u$ and is given by 
    \[
            g_u^{-1}\Ric_{g_u} = (\tr D) D - \tr (D^2) {\rm Id}.
    \]
\end{enumerate}

Since $\exp(u D_M) : (M, g_u) \to (M,g) $ is an isometry, and $\exp(u D_M)$ pointwise commutes with $D_M$, condition (ii) reduces to a verification for $u=0$, i.e.~for $g$. Recall that  $D_M$ satisfies $\tr (D_M^2) = -\tfrac12  \tr D_M$ \eqref{eqn:trD2}. Choosing $\alpha = (2 \tr (D_M^2))^{-1/2}$ we get 
\[
    \tr(D^2) = \frac12, \qquad (\tr D) D = -D_M.
\]
Thus, (ii) now follows from  \eqref{eqn:Ricg}. On the other hand, it is clear that $D_M$ has constant eigenvalues, thus $g$ has constant scalar curvature. Hence, by \eqref{eqn:Ricg} and the contracted second Bianchi identity, we have 
\[
    -{\rm div } D_M  =  {\rm div} (\RicE{g}) = {\rm div} \Ric_g = \tfrac12 \, d \scal_g = 0.
\]
This concludes the proof of \Cref{main_Einstein}.

\appendix

\section{Nilsolitons}\label{app_nil}

In this section we collect some useful information about nilsolitons which we have used throughout the article. Some references for this topic are \cite{soliton}, \cite{cruzchica}. Here, $\N$ will denote a simply-connected nilpotent Lie group.

\begin{definition}
A \emph{nilsoliton} is a left-invariant Ricci soliton metric $h$ on $\N$, i.e.~its Ricci curvature satisfies
\[
    \Ric_h = \lambda h -\tfrac12 \mathcal{L}_X h,
\]
for some vector field $X$ on $\N$ (a \emph{soliton vector field}), and some  $\lambda \in \R$ (the \emph{cosmological constant}).
\end{definition}

The following was originally shown in \cite{soliton}, with a correction in \cite{Jbl2015}:

\begin{prop}\label{prop_nilsoliton}
A left-invariant metric $h$ on $\N$ is a nilsoliton if and only if its (1,1)-Ricci tensor restricted to the Lie algebra $\ngo$ of left-invariant vector fields on $\N$ satisfies 
\begin{equation}\label{eqn:algsoliton}
    h^{-1} \Ric_{h} = \lambda \, {\rm Id}_\ngo - D, \qquad D\in \Der(\ngo).
\end{equation}
\end{prop}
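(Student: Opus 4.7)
The argument splits into the two directions. For the backward implication, assume $\RicE{h} = \lambda\,\id_\ngo - D$ with $D\in\Der(\ngo)$; since $\RicE{h}$ is $h$-self-adjoint, so is $D$. Because $\N$ is simply connected, the canonical isomorphism $\Aut(\N)\simeq\Aut(\ngo)$ allows us to integrate $D$ to a one-parameter subgroup $\varphi_t\in\Aut(\N)$ with $\varphi_{t*}=\exp(tD)$. Let $X$ be the vector field generating $(\varphi_t)_{t\in\R}$. Then $\mathcal{L}_X h=\frac{d}{dt}\big|_{t=0}\varphi_t^* h$, and evaluating on left-invariant fields $V,W\in\ngo$ via $(\varphi_t^* h)(V,W)=h(\varphi_{t*}V,\varphi_{t*}W)$ gives $\mathcal{L}_X h|_\ngo = h(D\,\cdot,\cdot)+h(\cdot,D\,\cdot)=2\,h(D\,\cdot,\cdot)$ by self-adjointness of $D$. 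Substituting verifies the soliton equation $\Ric_h+\tfrac12\mathcal{L}_X h=\lambda h$.

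For the forward implication, assume $(h,X)$ satisfies the soliton equation. The associated Ricci flow is self-similar of the form $h(t)=c(t)\,\phi_t^* h$, and uniqueness of Ricci flow on complete manifolds of bounded geometry forces $h(t)$ to remain left-invariant. Wilson's theorem characterises every isometry between left-invariant metrics on a simply connected nilpotent Lie group as the composition of a left-translation with a Lie group automorphism, so we may write $\phi_t=L_{a(t)}\circ\psi_t$ with $\psi_t\in\Aut(\N)$. Since left-translations preserve $h$, this reduces to $h(t)=c(t)\,\psi_t^* h$; differentiating at $t=0$ produces $\RicE{h}=\lambda\,\id_\ngo-\tfrac12(\tilde D+\tilde D^{*})$, where $\tilde D=\frac{d}{dt}\big|_0\psi_{t*}\in\Der(\ngo)$, $\tilde D^*$ is its $h$-transpose, and $\lambda$ absorbs the scaling rate $\dot c(0)$.

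The main obstacle is that $\tilde D^*$ is not a priori a derivation, so the self-adjoint operator $\tfrac12(\tilde D+\tilde D^*)$ is not manifestly of the form $D\in\Der(\ngo)$ demanded by the statement. This is precisely the subtlety that needed to be corrected in Lauret's original argument, addressed in Jablonski's work by invoking the GIT/moment-map structure on the variety $\mathcal{V}$ of nilpotent Lie brackets in $\Lambda^2\ngo^*\otimes\ngo$ under the $\GL(\ngo)$-action: with respect to the inner product induced by $h$, the moment map for the $\Or(\ngo,h)$-action agrees up to a constant with the Ricci endomorphism, so the nilsoliton condition becomes the critical-point condition for the squared norm $\|\mu\|^2$ along the $\GL(\ngo)$-orbit of the bracket. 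Real GIT then forces $\RicE{h}-\lambda\,\id_\ngo$ to lie in the intersection of the tangent space to the orbit (i.e. the image of the symmetric part of $\Der(\ngo)$ acting on the bracket) with the self-adjoint endomorphisms of $\ngo$, yielding the desired identity $\RicE{h}=\lambda\,\id_\ngo-D$ with $D\in\Der(\ngo)$ self-adjoint.
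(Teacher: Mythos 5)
Your backward direction is fine and matches the paper: take $D$ self-adjoint, integrate $\exp(tD)\in\Aut(\ngo)\simeq\Aut(\N)$ to a one-parameter family of automorphisms, and let $X$ be its generator. Your forward direction also starts the same way as the paper's (self-similarity, Ricci flow preserves isometries/left-invariance, Wilson's theorem to split $\phi_t$ into a translation and an automorphism, differentiate to get $\RicE{h}=\lambda\,\id_\ngo - \tfrac12(\tilde D+\tilde D^*)$ with $\tilde D\in\Der(\ngo)$). You also correctly identify the crux: $\tilde D^*$, and hence $\tfrac12(\tilde D+\tilde D^*)$, is not a priori a derivation, and this is exactly the gap that had to be closed in Lauret's original argument.

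However, the way you close that gap is not an argument but a gesture, and as written it is circular. You say that ``the nilsoliton condition becomes the critical-point condition for $\|\mu\|^2$ along the $\GL(\ngo)$-orbit'' and that ``Real GIT then forces'' the conclusion. The equivalence between the Ricci soliton equation and the critical-point condition \emph{is} precisely the content of this proposition, so invoking it is assuming what you need to prove; the critical-point characterisation is what is equivalent to \eqref{eqn:algsoliton}, not (a priori) to the metric soliton equation. Moreover, the parenthetical ``the image of the symmetric part of $\Der(\ngo)$ acting on the bracket'' does not describe the tangent space to the $\GL(\ngo)$-orbit --- symmetric derivations annihilate the bracket. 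What is actually needed, and what the paper supplies, is a concrete computation: feed $E=[\tilde D,\tilde D^*]$ into the moment-map identity \eqref{eqn:Ric_mm} and use $[\,\tilde D+\tilde D^*,\beta\,]=0$ (which holds automatically since $\beta = 2\RicE{h} = -\id_\ngo-(\tilde D+\tilde D^*)$) to deduce $\|\delta(\tilde D^*)\|^2=0$, i.e.~$\tilde D^*\in\Der(\ngo)$; this is \Cref{lem:D_perp_adU}. You should replace the ``Real GIT forces'' sentence with that computation (or a citation to it), both to avoid circularity and to make the step verifiable.
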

\begin{proof}
Without loss of generality let us assume $\lambda = -1/2$. If $h$ is a left-invariant Ricci soliton, let $(\eta_t)_{t\in \R}$ be the flow of $X$. Then $h_t := \eta_t^* h$ solves the normalised Ricci flow \eqref{eqn:NRF}. Since Ricci flow preserves isometries, $\N$ acts by isometries of $h_t$ for all $t\in \R$. By Wilson's theorem \cite{Wil82}, $\eta_t \in \Aut(\N)\ltimes \N$, thus there exists a smooth one-parameter family $(\varphi_t)_{t\in \R}\subset \Aut(\N)$ such that $h_t = \varphi_t^* h$. We may thus assume that $X$ is generated by $\varphi_t$. Since $\N$ is simply-connected, there exists $\Phi_t \in \Aut(\ngo)$ with $d \Phi_t |_e = \varphi_t$. Write $\Phi_t = \exp(t D)$ for some $D \in \Der(\ngo)$. Then, at the identity $e\in \N$ it holds that
\[
    (\varphi_t^* h)_e = h(\exp(tD) \cdot, \exp(tD) \cdot ),
\]
from which $(\mathcal{L}_X h)_e (\cdot, \cdot) = h(D \cdot, \cdot) + h(\cdot, D \cdot)$. It follows that 
\[
    \RicE{h}  = -\tfrac12 {\rm Id} - \tfrac12 (D+D^T).
\]
By \Cref{lem:D_perp_adU} below this implies $\tfrac12 (D+D^T) \in \Der(\ngo)$. Replacing $D$ by $\tfrac12 (D+D^T)$, \eqref{eqn:algsoliton} follows. 

Conversely, assume \eqref{eqn:algsoliton} holds, and let $\varphi_t \in \Aut(\N)$ be determined by $\exp(tD) \in \Aut(\ngo)$. By the same  computations as above, the Ricci soliton equation is satisfied for the vector field $X$ generated by $\varphi_t$.
\end{proof}

\begin{example}\label{ex:heis3}
If $\N$ is the three-dimensional Heisenberg Lie group of $3\times 3$ upper-triangular real matrices with $1$'s on the diagonal, then any left-invariant metric is a nilsoliton. Indeed, from  \cite{Mln} there is an orthonormal basis $\{e_1,e_2,e_3\}$ of $\ngo$ with $[e_1, e_2] = 2  c \,  e_3$, and the Ricci endomorphism is given by 
\[
   \RicE{h} =  -3c^2 \, {\rm Id} + 2c^2 \, {\rm diag}(1, 1, 2), \qquad{\rm diag}(1, 1, 2) \in \Der(\ngo).
\]
\end{example}

We call the derivation $D\in \Der(\ngo)$ from \eqref{eqn:algsoliton} the \emph{soliton derivation}. This derivation has a number of properties arising from the GIT applications to this setting. Among them, the fact that $-D$ is positive definite is of fundamental importance for most applications (see \Cref{rmk_GIT}, \ref{item_beta+>0} below). 

Let $X_D$ be the vector field on $\N$ whose flow is the one-parameter subgroup of automorphisms $(\eta_t) \subset \Aut(\N)$ satisfying $(\eta_t)_* = \exp(tD)$ for all $t\in \R$. Notice that $X_D$ is nothing but the fundamental vector field corresponding to $D$ for the action of $\Aut(\N)$ on $\N$. This is an example of a \emph{linear vector field}, since in exponential coordinates $\exp : \ngo \to \N$, $X_D$ is simply given by $\ngo \ni U  \mapsto DU$. 

\begin{prop}\label{prop_nilsoliton_vf}
Let $(N,h)$ be a nilsoliton with soliton derivation $D$. Then, any soliton vector field $X$ is of the form $X_{\tilde D} + Y_R$, where $Y_R$ is any right-invariant vector field on $\N$, and $\tilde D \in \Der(\ngo)$ is such that $\tfrac12 (\tilde D + \tilde D^T) = D$. Moreover', any such $X$ has a unique zero on $\N$. 
\end{prop}
\begin{proof}
By the proof of \Cref{prop_nilsoliton}, it follows that $X_D$ is a soliton vector field. Any two soliton vector fields differ by a Killing field, and by \cite{Wil82}, for a left-invariant metric on a nilpotent Lie group, these are always of the form $ X_{D_1} +  Y_R$, where $Y_R$ is right-invariant and $X_{D_1}$ is linear, with $D_1 \in \Der(\ngo)$ a $h$-skew-symmetric derivation. Hence, any soliton vector field is of the form $X_D + X_{D_1} + Y_R = X_{\tilde D} + Y_R$, $\tilde D:= D + D_1$, and the first part of the statement follows.

Applying  $\mathcal{L}_{X_{D_1}}$ to the soliton equation yields
\[
    0 = \mathcal{L}_{X_{D_1}} \mathcal{L}_{X_D} h =  \mathcal{L}_{[X_{D_1}, X_D]} h = \mathcal{L}_{X_{[D_1, D]}} h.
\]
Then $X_{[D_1, D]}$ is a Killing field vanishing at $e$, thus of the form $X_{D_2}$ with $D_2$ a skew-symmetric derivation. Thus, $[D_1, D] = D_2$, and since the left-hand-side is symmetric, we have $[D_1, D] = 0$. In particular, $\tilde D$ is invertible (its symmetric part $D$ is positive definite).


Regarding the zeroes of $X$, we first notice that $\tilde D$ is invertible, as it is a normal operator with positive-definite self-adjoint part. We work in exponential coordinates. Using the Baker-Campbell-Hausdorff formula and nilpotency, it is easy to see that a right-invariant vector field $Y_R$ whose value at $e$ is $Y\in \ngo$ is represented in these coordinates by a map 
\[
    \ngo \ni U \mapsto  F(U) := P(\ad U)Y,
\]
where $P$ is a polynomial whose constant coefficient is $1$. Finding a zero for $X = X_{\tilde D} + Y_R$ is thus equivalent to proving that the map
\begin{equation}\label{eqn_DU+FU=0}
 \tilde D U + F(U) = 0
\end{equation}
always has a solution on $\ngo$. To prove this, we work iteratively with the lower central series $\ngo^i := [\ngo, \ngo^{i-1}]$, $\ngo^1 = \ngo$. We will construct, by induction on $k$, a $U_k$ such that $U_k + F(U_k) \in \ngo^{k+1}$. For the base case, we look at \eqref{eqn_DU+FU=0} modulo $\ngo^2$. Since $\tilde D$ is a derivation, it preserves $\ngo^2$, and it hence induces an invertible map on $\ngo^1/\ngo^2$. On the other hand, $F(U)$ modulo $\ngo^2$ is simply the image of $Y$ in $\ngo^1/\ngo^2$, therefore we can always find $U_1$ solving \eqref{eqn_DU+FU=0} modulo $\ngo^2$. 

Assuming we have constructed $U_k$ solving \eqref{eqn_DU+FU=0} modulo $\ngo^{k+1}$, we now seek for $U_{k+1}$ of the form $U_k + V$ with $V\in \ngo^{k+1}$. We have 
\[
    \tilde D U_{k+1} + F(U_{k+1}) = \tilde DU_k + F(U_k) + \tilde D V + (F(U_{k+1}) - F(U_k)).
\]
Since $V\in \ngo^{k+1}$, and all terms in $F(U_k + V) - F(U_k)$ involve at least one bracket with $V$, it follows that this last term is in fact in $\ngo^{k+2}$, and so it can be ignored. By the invertibility of the induced map $\tilde D$ on $\ngo^{k+1}/\ngo^{k+2}$, and the fact that $\tilde D U_k + F(U_k) \in \ngo^{k+1}$, we can solve for $V \in \ngo^{k+1}$ so that 
\[
    \tilde D U_k + F(U_k) + \tilde D V \in \ngo^{k+2},
\]
and this completes the proof.
\end{proof}

\begin{corollary}\label{cor_solitonderiv}
Let $(\N,h)$ be a nilsoliton with soliton derivation $D$. Then, the soliton vector field may be chosen to be $X_D$, the unique vector field whose flow is a one-parameter subgroup of automorphisms $(\eta_t) \subset \Aut(\N)$ satisfying $(\eta_t)_* = \exp(tD)$ for all $t\in \R$.
\end{corollary}

\begin{lemma}\label{lem_commutation}
Let $(\N,h)$ be a nilsoliton with soliton derivation $D$, and let $(\eta_t)_{t\in \R} \subset \Aut(\N)$ be as in  \Cref{cor_solitonderiv}. If $\varphi \in \Aut(\N), x\in \N$ are such that $\varphi \circ L_x$ commutes with $\eta_t$ for all $t\in \R$, then $x = e$. 
\end{lemma}
\begin{proof}
It suffices to show that $x = \eta_t(x)$ for all $t \in \R$. Indeed, since $\N$ is nilpotent and simply-connected, there exists $X\in \ngo$ such that $x = \exp(X)$. Then $X = (\eta_t)_* X$, and differentiating at $t=0$ we get $DX =0$, from which $X=0$ by definiteness of $D$. 

Now recall that for any $\psi\in \Aut(\N)$ we have  $\psi \circ L_x = L_{\psi(x)} \circ \psi$. Hence, 
\[
    \varphi \circ L_x \circ  \eta_t = \eta_t  \varphi \circ L_x = \eta_t  \varphi \eta_t^{-1} \eta_t  \circ L_x 
    = \eta_t \varphi \eta_t^{-1} L_{\eta_t(x)} \circ \eta_t,
\]
from which we quickly deduce that
\[
    L_{x \eta_t(x)^{-1}} = \varphi^{-1} \eta_t \varphi \eta_t^{-1} \in \Aut(\N).
\]
Evaluating at $e$ yields $x \eta_t(x)^{-1} = e$ for all $t\in \R$, and this concludes the proof.
\end{proof}

It is known that every simply-connected nilpotent Lie group of dimension up to $6$ admits a nilsoliton metric \cites{finding,Wll03}, but already in dimension $7$ there are some which do not: see \cite{einsteinsolv} for details about the existence and non-existence question. In general, the classification of nilpotent Lie algebras admitting nilsoliton inner products is wide open as far as we know.

\smallskip

While these metrics do not exist on any nilpotent Lie group, when they do, they are unique up to isometry, in a very strong sense: given any two nilsolitons $h_1$, $h_2$ on the same Lie group $\N$, up to scaling there exists $\varphi\in \Aut(\N)$ such that $h_2 = \varphi^* h_1$ \cite{soliton}. 

A left-invariant metric on a connected Lie group $\N$ is determined by an inner product $h\in\Sym_+^2(\ngo^*) $ on the Lie algebra $\ngo$.  The space of inner products  is a Riemannian symmetric space of non-positive curvature: 
\[
\Sym^2_+(\ngo^*) = \Glp(\ngo) / \mathsf{SO}(\ngo, \bar h). 
\]
Here $\bar h$ is certain fixed inner product, and the action of $\Glp(\ngo)$ is by pull-back: 
\begin{equation}\label{eqn:GLnacts_sym2}
    (q\cdot \bar h) (\cdot ,\cdot) = \bar h (q^{-1} \cdot, q^{-1} \cdot), \qquad q\in \Glp(\ngo).    
\end{equation}
The symmetric metric on $\Sym^2_+(\ngo^*)$ is  given for $h\in \Sym^2_+(\ngo^*)$ and  $k\in T_{h} \Sym^2_+(\ngo^*) = \Sym^2(\ngo^*)$ by
\begin{equation}\label{eqn:g_sym}
    (g_{\rm sym})_h (k,k) = \sum_{i,j = 1}^{\dim \ngo} k(e_i, e_j) k(e_i,e_j), 
\end{equation}
for any $h$-orthonormal basis $\{e_i\}$ of $\ngo$. See \cite{Heber1998}*{$\S$3.1} for further details.

In terms of this structure, if $\bar h$ is itself a nilsoliton with cosmological constant $\lambda$, then by the above-mentioned uniqueness, the set of nilsoliton inner products on $\ngo$ with cosmological constant $\lambda$ is the orbit 
\[
    \Aut(\ngo) \cdot \bar h \,\,  \subset \,\,  \Glp(\ngo)/\SO(\ngo,\bar h).
\]
Set $\beta = \RicE{\bar h} \in \End(\ngo)$, let $\bar \nu \in \Lambda^{\rm top} \ngo^*$ correspond to the Riemannian volume form of $\bar h$, and denote by $|\bar \nu |$  the corresponding volume density.

\begin{lemma}\label{lem:solitons_connected}
The set of nilsoliton inner products in $\Sym^2_+(\ngo^*)$ with Ricci endomorphism  $\beta$ and with volume density $|\bar\nu|$ is connected, and it is given by the orbit
\[
    \G \cdot \bar h \,\,  \subset \,\,  \Glp(\ngo)/\SO(\ngo,\bar h), \qquad
   \G := \{ \varphi \in \Aut(\ngo) : \varphi \beta \varphi^{-1} = \beta, \,\, \det \varphi = \pm 1 \}.
\]
\end{lemma}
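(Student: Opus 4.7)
The plan is to identify $\Aut(\ngo)\cdot\bar h$ as the set of all nilsoliton inner products with cosmological constant $-\tfrac12$, then single out those with the two prescribed properties by conditions on $q\in\Aut(\ngo)$, and finally establish connectedness using the reductive structure of $\G$.

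The first identification will follow at once from the uniqueness result \cite{soliton} recalled just before the lemma: any two nilsolitons on $\N$ with the same cosmological constant differ by a Lie group automorphism, so the set of nilsoliton inner products on $\ngo$ with cosmological constant $-\tfrac12$ coincides with $\Aut(\ngo)\cdot\bar h$. Next, for $h = q\cdot\bar h$ with $q\in\Aut(\ngo)$, the definition \eqref{eqn:GLnacts_sym2} of the $\GL(\ngo)$-action says precisely that $q^{-1}:(\ngo,h)\to(\ngo,\bar h)$ is an isometric Lie algebra isomorphism; intertwining of the associated Ricci endomorphisms therefore yields $\RicE{h} = q\,\beta\, q^{-1}$, so the condition $\RicE{h}=\beta$ becomes $q\beta = \beta q$. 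Writing $h = (q^{-1})^{T}\bar h\,q^{-1}$ in a fixed basis gives $\det h = (\det q)^{-2}\det\bar h$ and hence $|\nu_h| = |\det q|^{-1}|\bar\nu|$, so the condition $|\nu_h|=|\bar\nu|$ becomes $\det q = \pm 1$. Together these cut $\Aut(\ngo)$ down to precisely $\G$, giving the claimed orbit description.

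For connectedness, the $\G$-stabiliser of $\bar h$ equals $\G\cap \mathsf{O}(\ngo,\bar h) = \K$, so the orbit map descends to a diffeomorphism $\G/\K \simeq \G\cdot\bar h$. By the real GIT results \cites{RS90,Nik11} that motivate the definition \eqref{eqn_G=Autbeta}, $\G$ is a real reductive Lie group and $\K$ is a maximal compact subgroup. The Cartan decomposition $\G = \K\exp(\mathfrak{p})$ then produces a diffeomorphism $\G/\K \simeq \mathfrak{p}$ onto a real vector space, and in particular $\K$ meets every connected component of $\G$, so $\G\cdot\bar h$ is connected (indeed contractible).

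The one step requiring non-trivial external input will be the reductive structure of $\G$ together with the maximality of $\K$; both are precisely what the cited real GIT results provide, and are recalled for this purpose in \Cref{app_nil}. The remaining arguments amount to bookkeeping in the convention that the $\GL(\ngo)$-action on $\Sym^2_+(\ngo^*)$ is by pullback under the inverse, combined with the characterisation of nilsoliton orbits in \Cref{prop_nilsoliton}.
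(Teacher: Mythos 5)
Your proof is correct and follows essentially the same route as the paper's: reduce to the orbit $\Aut(\ngo)\cdot\bar h$ via Lauret's uniqueness of nilsolitons, observe that the conditions on the Ricci endomorphism and volume density cut this orbit down to $\G\cdot\bar h$, and then deduce connectedness from the Cartan decomposition of the real reductive group $\G$ supplied by \cites{RS90,Nik11}. The paper phrases the last step as $\G\cdot\bar h = \{\exp(E)\cdot\bar h : E\in\mathfrak{p}\}$ rather than going through the diffeomorphism $\G/\K\simeq\mathfrak{p}$, but this is the same argument.
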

\begin{proof}
By uniqueness, nilsoliton inner products lie in the orbit $\Aut(\ngo) \cdot \bar h$. Each $\varphi \in \Aut(\ngo)$ gives an isometry of metric Lie algebras $\varphi : (\ngo, \bar h) \to (\ngo, \varphi \cdot \bar h)$, thus the Ricci endomorphism of $\varphi\cdot \bar h$ is $\varphi \beta \varphi^{-1}$,  and its volume density is $|{\det \varphi}|^{-1} |\bar \nu|$. 

To see that $\G \cdot \bar h$ is connected, we use that $\G$ is real reductive, that is, it admits a Cartan decomposition
\[
    \G =  \exp (\mathfrak{p}) \K, \qquad \K :=  \G \cap \mathsf{O}(\ngo, \bar h),  \quad \mathfrak{p} := \{ E\in \Der(\ngo) : [E,\beta] = 0, E = E^T, \tr E = 0 \}.
\]
This fact is well-known to experts, see \cite{Nik11}*{Thm.~5.1} and \cite{RS90}*{Thm.~4.3}. It follows that $\G \cdot \bar h = \{ \exp(E) \cdot \bar h : E\in \mathfrak{p} \}$ is an embedded, connected submanifold of $\Glp(\ngo)/\SO(\ngo,\bar h)$.
\end{proof}

A key ingredient in the study of nilsolitons is the following moment map formulation for the Ricci endomorphism  of a left-invariant metric $h$ on $\N$ (see e.g.~\cite{Lau06}*{Prop.~4.2}; to see in what sense this is a moment map equation see \cite{GIT20}*{$\S 1.3$}.):
\begin{equation}\label{eqn:Ric_mm}
    \tr \big( (h^{-1} {\Ric_h}) E \big) = -\tfrac12 \langle  \delta (E), [\cdot,\cdot]\rangle_{ h},\qquad \forall \, E\in \End(\ngo).
\end{equation}
Here $[\cdot,\cdot] \in \Lambda^2\ngo^* \otimes \ngo$ is the Lie bracket of $\ngo$,  $\delta(E)(\cdot,\cdot) = -E [\cdot,\cdot] + [E\cdot, \cdot] + [\cdot, E\cdot] \in \Lambda^2\ngo^* \otimes \ngo$, and the inner product is the natural extension of $h$ to the $\Lambda^2\ngo^* \otimes \ngo$. Notice that $\delta(D) = 0$ precisely when $D\in \Der(\ngo)$. 

As an immediate application of \eqref{eqn:Ric_mm} we see that the Ricci endomorphism is orthogonal to derivations, thus the  nilsoliton derivation $D$  from \Cref{prop_nilsoliton} must satisfy
\begin{equation}\label{eqn:trD2}
    \tr D^2 = \lambda \tr D.
\end{equation}
Also, taking $E = h^{-1}{\Ric_h}$ in \eqref{eqn:Ric_mm} yields 
\[
    |{\Ric_h}|^2 = \lambda \, \scal_h .
\]
Since $\scal_h \leq 0$ with equality if and only if $\ngo$ is abelian \cite{Mln}, it follows that non-abelian nilsolitons are of expanding type $\lambda < 0$. 

Another application of \eqref{eqn:Ric_mm} is the following lemma about derivations that commute with Ricci:

\begin{lemma}\label{lem:D_perp_adU}
Let $(\N,  h)$ be a nilpotent Lie group with invariant metric, with Ricci endomorphism $ h^{-1}\Ric_{ h} = \tfrac12 \beta$. If $D\in \Der(\ngo)$ is such that $[D+D^T,\beta]= 0$ (this holds in particular if $[D,\beta] = 0$), then $L := D+D^T \in \Der(\ngo)$ and 
\[
    \tr \big((\ad U) L \big) = 0, \qquad \forall  \, U\in \ngo. 
\]
\begin{proof}
Taking $E = [D,D^T]$ in \eqref{eqn:Ric_mm} (transpose with respect to $\bar h$), standard properties of $\delta$  and the extended inner products (see \cite{alek}*{$\S$7}) give 
\[
    0    = \tr  [\beta, D+D^T] D^T = \tr \beta [D,D^T]  = -\tfrac12  \langle \delta([D,D^T]) , [\cdot,\cdot] \rangle_{\bar h}  = -\tfrac12 \| \delta(D^T) \|_{\bar h}^2.
\]
It follows that $D^T \in \Der(\ngo)$, and the same for $L$. To see that the trace against $\ad U$ vanishes, consider the orthogonal decomposition $\ngo = \ngo_1 \oplus \cdots \oplus \ngo_k$, where $\ngo_2 \oplus \cdots \ngo_k = [\ngo,\ngo]$, $\ngo_3\oplus \cdots \ngo_k = [\ngo,[\ngo,\ngo]]$, and so on. It is easy to see that, in an adapted basis, the matrix of $\ad U$ is strictly block-lower-triangular, whereas the matrix of the derivation $L$ is block-lower-triangular, and the lemma follows. 
\end{proof}
\end{lemma}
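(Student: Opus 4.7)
The plan is to establish the two claims in turn. For the first, that $L := D + D^T \in \Der(\ngo)$, the key tool is the moment map formula \eqref{eqn:Ric_mm}, which characterises derivations as the kernel of $\delta$; for the second, the trace vanishing $\tr((\ad U) L) = 0$, the key tool is the strict raising of descending-central-series depth by $\ad U$.

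For the first claim, I would evaluate $\tr(\RicE{h} \cdot [D, D^T])$ in two different ways. The map $\delta\colon \End(\ngo) \to \Lambda^2\ngo^* \otimes \ngo$ arises from a Lie algebra representation of $\End(\ngo)$ on the space of brackets, namely $\delta(E) = E \cdot \mu$ where $\mu$ denotes the Lie bracket. Since this is a Lie algebra action and $D \cdot \mu = 0$, one has $\delta([D, D^T]) = D \cdot (D^T \cdot \mu) = D \cdot \delta(D^T)$. Substituting into \eqref{eqn:Ric_mm} and using the standard fact that the $h$-adjoint of the tensor action of $D$ is the action of $D^T$, one obtains
\[
\tr(\RicE{h} \cdot [D, D^T]) \;=\; -\tfrac{1}{2}\,\|\delta(D^T)\|_h^{\,2}.
\]
On the other hand, $\RicE{h} = \tfrac12 \beta$, and a cyclic manipulation gives $\tr(\beta [D, D^T]) = \tr(D^T[\beta, D])$. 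The hypothesis $[\beta, L] = 0$ means $[\beta, D] = -[\beta, D^T]$, so $\tr(D^T[\beta, D]) = -\tr(D^T[\beta, D^T])$, and a further cyclic shuffle in the right-hand side exhibits this as a difference of two equal terms, hence zero. Combining, $\|\delta(D^T)\|_h^2 = 0$, which forces $D^T \in \Der(\ngo)$ and hence $L \in \Der(\ngo)$.

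For the second claim, take the $h$-orthogonal decomposition $\ngo = \ngo_1 \oplus \cdots \oplus \ngo_k$ in which $\ngo_j \oplus \cdots \oplus \ngo_k$ is the $j$-th step of the descending central series of $\ngo$. Every derivation preserves this filtration; because $L$ is in addition $h$-self-adjoint, it also preserves each orthogonal complement, and therefore each summand $\ngo_j$ individually, so its matrix in an adapted orthonormal basis is block-diagonal. In the same basis, $\ad U$ is strictly block-lower-triangular for every $U \in \ngo$, since it maps the $j$-th step into the $(j{+}1)$-st. Consequently $(\ad U) L$ is again strictly block-lower-triangular, and $\tr((\ad U) L) = 0$.

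I expect the main obstacle to be the vanishing of $\tr(\beta[D, D^T])$: neither side of the moment map identity is manifestly zero, and the cancellation requires using the hypothesis $[\beta, L] = 0$ combined with cyclicity of the trace applied in just the right order. The remaining pieces are either formal consequences of the representation-theoretic setup for $\delta$, or elementary linear algebra with respect to the lower central filtration.
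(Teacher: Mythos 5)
Your argument is correct and follows the paper's proof very closely: both use the moment map formula \eqref{eqn:Ric_mm} with $E=[D,D^T]$ together with the hypothesis $[\beta,L]=0$ to deduce $\|\delta(D^T)\|_h^2=0$, and both use the descending central series to reduce the trace identity to the observation that $\ad U$ is strictly block-lower-triangular. The only cosmetic differences are that you deduce $\tr\beta[D,D^T]=0$ by an extra cyclic shuffle (the paper starts directly from $\tr[\beta,L]D^T=0$) and that you observe $L$ is actually block-diagonal using its $h$-self-adjointness, which is a slight sharpening of the paper's block-lower-triangular statement but buys nothing new for the conclusion.
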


\begin{remark}\label{rmk_GIT}
Formula \eqref{eqn:Ric_mm} is in fact just the tip of the iceberg, giving rise to an extremely fruitful application of ideas from Geometric Invariant Theory, extended to the case of real reductive Lie groups \cites{RS90,Mar2001, HS07, HSS}, to problems involving the Ricci curvature of homogeneous Riemannian manifolds, and more generally Riemannian manifolds with symmetries, see e.g. \cites{Heber1998,standard,JblPet14, BL17,alek_sol}. Many of these applications rely on a Kirwan-Ness stratification of the variety of Lie algebras. One associates to each metric Lie algebra $(\ngo, h)$ a uniquely-determined $h$-self-adjoint endomorphism $\beta_h \in \End(\ngo)$, called the \emph{GIT weights}. Its conjugacy class is independent of $h$ and serves as a label for the stratum containing the Lie algebra $\ngo$.

We now list some properties of the GIT weights that are key in some of the proofs in this article. We refer the reader to \cite{GIT20} and therein references for proofs and further details.
\begin{enumerate}[label=(\roman*)]
    \item \label{item_trbeta} The conjugacy class of $\beta_h$ is independent of $h$. In particular, $\tr \beta_h^2 = : \tr \beta^2$ is constant, and we normalise $\tr \beta_h = - 1$;
    \item \label{item_beta+>0} $\beta^+_h := {\rm Id} + (\tr \beta^2)^{-1} \beta_h$ is positive definite, provided $\ngo$ is nilpotent;
    \item \label{item_RicGIT} $\tr \big(\RicE{h} \beta^+_h) \geq 0$, with equality if and only if $\beta^+_h\in \Der(\ngo)$;
    \item \label{item_soliton} For any inner product $h$ on $\ngo$ with $\scal_h = -1$ we have that $|{\Ric_h}|_h^2 \geq \tr \beta^2$, with equality if and only if $h$ is a nilsoliton and $h^{-1}\Ric_h = \beta_h$.
\end{enumerate}
\end{remark}

\bibliography{../ramlaf2}
\bibliographystyle{amsalpha}

\end{document}